\documentclass[12pt]{amsart}
\usepackage[english]{babel}
\usepackage[utf8]{inputenc}
\usepackage{lipsum}
\usepackage{mathrsfs}
\usepackage{stix}
\usepackage{fullpage}
\usepackage{mathtools}
\usepackage{amsmath}
\usepackage{tikz-cd}
\newtheorem{defi}{Definition}[section]

\newtheorem{lema}[defi]{Lemma}
\newtheorem{teo}[defi]{Theorem}
\newtheorem{rem}[defi]{Remark}

\newtheorem{coro}[defi]{Corollary}
\newtheorem{pro}[defi]{Proposition}
\newtheorem*{rem*}{Remark}

\newcommand\mysim{\stackrel{\mathclap{\normalfont\mbox{\tiny{$m_N , \rho , \delta$}}}}{\sim}}

\newcommand{\ha}{{\mathscr H}}

\newcommand{\hb}{\mathcal{H}}

\newcommand{\interior}[1]{%
  {\kern0pt#1}^{\mathrm{o}}%
}

\newcommand{\I}{\mathcal{I}}
\newcommand{\T}{\mathbb{T}}
\newcommand{\Td}{\mathbb{T}^d}
\newcommand{\C}{\mathbb{C}}
\newcommand{\Q}{\mathbb{Q}}

\newcommand{\R}{\mathbb{R}}
\newcommand{\N}{\mathbb{N}}
\newcommand{\Z}{\mathbb{Z}}

\newcommand{\esp}{\text{  }}

\usepackage[colorlinks=false]{hyperref}
\renewcommand\eqref[1]{(\ref{#1})} 

\begin{document}

\title[H{\"O}RMANDER CLASSES OF PSEUDO DIFFERENTIAL OPERATORS OVER THE COMPACT GROUP OF $p$-ADIC INTEGERS]
 {H{\"O}RMANDER CLASSES OF PSEUDO DIFFERENTIAL OPERATORS OVER THE COMPACT GROUP OF $p$-ADIC INTEGERS}

\author{
  J.P. Velasquez-Rodriguez
}

\newcommand{\Addresses}{{
  \footnotesize
  J.P.~VELASQUEZ-RODRIGUEZ, \textsc{Department of Mathematics: Analysis, Logic and Discrete Mathematics, Ghent University, Belgium.}\par\nopagebreak
  \textit{E-mail address:} \texttt{JuanPablo.VelasquezRodriguez@UGent.be}
}
}
\thanks{The 
	author was supported by the FWO Odysseus 1 grant G.0H94.18N: Analysis and Partial Differential Equations.}

\subjclass[2010]{Primary; 43A75, 47G30; Secondary: 47B38, 35S05. }

\keywords{Infinite matrices, Fourier Analysis, Pseudo-differential Operators, p-Adic integers, H{\"o}rmander classes, Hypoelliptic operators}

\date{\today}
\begin{abstract}
The purpose of this paper is to introduce new definitions of H{\"o}rmander classes for pseudo-differential operators over the compact group of $p$-adic integers. Our definitions possesses a symbolic calculus, asymptotic expansions and parametrices, together with an interesting relation with the infinite matrices algebras studied by K. Gr{\"o}chenig and S. Jaffard. Also, we show how our definition of H{\"o}rmander classes is related to the definition given in the toroidal case by M. Ruzhansky and V. Turunen. In order to show the special properties of our definition, in a later work, we will study several spectral properties in terms of the symbol for pseudo-differential operators in the H{\"o}rmander classes here defined.
\end{abstract}
\maketitle
\tableofcontents
\section{\textbf{Introduction}}
In this note we study pseudo-differential operators over the compact group $\Z_p$ of $p$-adic integers making an analysis analogous to the Ruzhansky-Turunen theory summarised in \cite{ruzhansky1}. 

The theory of pseudo-differential operators over non-archimedean spaces have been widely studied \cite{padic1, zuniga2, zuniga1}, and it currently is a very active brand of mathematics motivated mostly for its connections with mathematical physics \cite{fisicapadica, vladimirov} and other disciplines like medicine \cite{padic2}, biology or social sciences \cite{p-adicgod}. One of the most important pseudo-differential equations is the $p$-adic analogue of the heat equation $\frac{\partial f}{\partial t} + D^s f =0$, where $D^s$ is the Vladimirov operator, the $p$-adic counterpart of the Laplacian. This is the master equation that describes a Markov process without second kind discontinuities and, as it is discussed by Kochubei \cite{p-adicball1, p-adicball2}, it governs several complex systems. 

The Vladimirov operator is initially defined for some authors trough the Fourier transform as a pseudo-differential operator, but one can find a more explicit formula for it, as the reader may find in the reference book \cite{Dyadic}. In the same way as the classical theory of pseudo-differential operators, which appears as a generalization of the theory of partial differential operators, the theory in the $p$-adic case appears after a definition of a logical derivative as an operator allowing to estimate the rate of change of functions in the context of Walsh-Fourier analysis. For example when $p=2$ there exist a definition of \emph{dyadic derivative} due to J. E. Gibbs. Gibbs definition allows one to measure the rate of change of a function in the context of the Walsh dyadic analysis, and it has several important applications, as the reader may find in \cite{Dyadic}. For general $p$, a concept of $p$-adic derivative is defined by C.W. Onneweer in \cite{Onneweer1977} among others, see \cite{Dyadic} for a complete list, extending Gibbs definition to a $p$-adic or $p$-series field. One can see that, via Fourier transform, the Vladimirov operator and the Gibbs logical derivative coincide (except, maybe, for a constant) because they define (almost) the same Fourier multiplier, so we have a whole theory of equations in logical derivatives as motivation for the theory of pseudo-differential operators in $\Z_p$. 

Despite the intense research on ultrametric pseudo-differential equations, the theory of non-archimedean pseudo-differential operators, from the point of view of H{\"o}rmander classes, is not so well developed compared with the classical version of the theory on real and complex manifolds. Moreover, the connections between both theories remain unexplored, and most of the methods in the classical theory have no analogy in the $p$-adic case. The purpose of this paper is to apply the ideas of the Ruzhansky-Turunen theory to the compact group of $p$-adic integes $\Z_p$ since in this case the analysis of pseudo-differential operators can be done in a very similar way to the toroidal case, even though it seems like this fact has not been exploited sistematically. What we want to show in this work is that in this setting there exists a notion of symbol classes defined by the usual Hörmander conditions, and the corresponding operator classes have similar properties to the toroidal classes $S^m_{0,0} (\T^d \times \Z^d)$, see \cite{ruzhansky1} for the definition. Moreover we provide a pseudo-differential calculus in $\Z_p$ with parametrices for elliptic operators. 

It is worth to emphasise two important facts. The first is that, since connected abelian groups are isomorphic to $\T^d \times  \R^{n-d}$, for appropriated $n$ and $d$, the study of operators in the group is reduced to the study of operator acting on $\T^d$ and $\R^d$. The case of abelian non-connected groups, such as any profinite groups, is different, but the analysis on $\Z_p$ will serve us to give some light about it, especially for compact Vilenkin groups as we will discuss in a later work. The second is that, even when we are talking about $\Z_p$, a subset of a non-archimedean space, we will barely use the non-archimedean structure during the development od our work. It is because many of the properties here studied rely mostly on the Hilbert space structure of $L^2 (\Z_p)$ and the known Fourier analysis on compact abelian groups. Anyway in the last three sections we will explain the implications of the non-archimidean structure and we will use them in a later work to study important spectral properties of pseudo-differential operators \cite{spectraltheoryp-adicpseudos}.  

The most important applications of the theory that we intend to develop here are, first, to prove the regularity of solutions to certain pseudo-differential equations, and second, to put in terms of the symbol important properties such as $L^r$-boundedness, compactness, belonging to Schatten classes and nuclearity, Riesz spectral theory, Fredholmness, ellipticity and Gohber's lemma, among others. We refer the reader to the work \cite{spectraltheoryp-adicpseudos} where we study several spectral properties of pseudo-differential operators in the H{\"o}rmander classes here defined. 

\section{\textbf{Preliminaries}}
Along this article $p$ will denote a fixed prime number. The field of $p$-adic numbers $\Q_p$ is defined as the complection of the field of rational numbers $\Q$ with respect to the $p$-adic norm $|\cdot|_p$ which is defined as \[|x|_p := \begin{cases}
0 & \esp \text{if} \esp x=0, \\ p^{-\gamma} & \esp \text{if} \esp x= p^{\gamma} \frac{a}{b},
\end{cases}\]where $a$ and $b$ are integers coprime with $p$. The integer $\gamma:= ord(x)$, with $ord(0) := + \infty$, is called the $p$-adic order of $x$. The unit ball of $\Q_p$ with the $p$-adic norm is called the compact group of $p$-adic integers and it will be denoted by $\Z_p$. Any $p$-adic number $x \neq 0$ has a unique expansion of the form $$x = p^{ord(x)} \sum_{j=0}^{\infty} x_j p^j,$$where $x_j \in \{0,1,...,p-1\}$ and $x_0 \neq 0$. By using this expansion, we define the fractional part of $x \in \Q_p$, denoted by $\{x\}_p$, as the rational number\[\{x\}_p := \begin{cases}
0 & \esp \text{if} \esp x=0 \esp \text{or} \esp ord(x) \geq 0, \\ p^{ord(x)} \sum_{j=0}^{-ord(x)-1} x_j p^j,& \esp \text{if} \esp ord(x) <0.
\end{cases}\] It is known that $\Z_p$ is a compact totally disconected abelian group. Its dual group in the sense of Pontryagin, the collection of characters of $\Z_p$, will be denoted by $\widehat{\Z}_p$. The dual group of the $p$-adic integers is known to be the Pr{\"u}fer group $\Z (p^{\infty})$,  the unique $p$-group in which every element has $p$ different $p$-th roots. The Pr{\"u}fer group may be identified with the quotient group $\Q_p/\Z_p$. In this way the characteres of the group $\Z_p$ may be written as $$\chi_p (\xi  x) := e^{2 \pi i \{x \xi \}_p}, \esp \esp x \in \Z_p, \esp \xi \in \widehat{\Z}_p 
\cong \Q_p / \Z_p .$$

By the Peter-Weyl theorem the elements of $\widehat{\Z}_p$ constitute an orthonormal basis for the Hilbert space $L^2 (\Z_p)$, which provide us a Fourier analysis for suitable functions defined on $\Z_p$ in such a way that the formula $$f(x) = \sum_{\xi \in \widehat{\Z}_p} \widehat{f}(\xi) \chi_p (\xi  x),$$holds almost everywhere in $\Z_p$. Here $\widehat{f}$ denotes the Fourier transform of $f$ in turn defined as $$\widehat{f}(\xi):= \int_{\Z_p} f(x) \overline{\chi_p (\xi  x)}dx,$$where $dx$ is the normalised Haar measure in $\Z_p$. The above series are called the Fourier series of the function $f$.

Using the representation of functions in its Fourier series, for a given densely defined linear operator $$T: \hb^\infty :=Span\{\chi_p (\xi x)\}_{\xi \in \widehat{\Z}_p } \subset D(T) \subseteq L^2 (\Z_p) \to L^2 (\Z_p),$$we can define its associated symbol $\sigma_T (x, \xi)$ by formula $$\sigma_T (x, \xi) = \overline{\chi_p (\xi  x)} T \chi_p (\xi  x).$$In this way we can think on any densely defined linear operator as a linear operator given in terms of its symbol by formula $$Tf(x) = \sum_{\xi \in  \widehat{\Z}_p} \sigma_T (x,\xi) \widehat{f}(\xi) \chi_p (\xi  x).$$An operator with the above form will be called a \emph{pseudo-differential operator} with symbol $\sigma_T (x,\xi)$. When the associated symbol does not depends on the $\Z_p$- variable, that is $\sigma_T (x , \xi) = \sigma_T (\xi)$, we will call $T_\sigma$ a \emph{Fourier multiplier} or an \emph{invariant operator}.

Our goal now is classify densely defined linear operators, which from now on will be thought as pseudo-differential operators, in terms of its associated symbol. With that purpose in mind in what follows, instead of consider abstract linear operators and find its associated symbol, we will only consider pseudo-differential operators defined in terms of a previously given symbol. Specially, we will work with pseudo-differential operators whose symbols belong to a certain H{\"o}rmander class, which we will define in the present work. These classes are thought in such a way that they include the Vladimirov operator in the ball $D^s$, $s>0$, here defined as the following Fourier multiplier $$D^s f (x):= \sum_{\xi \in \widehat{\Z}_p} \Big(|\xi |_p^s +  \frac{1- p^{-1}}{1 - p^{-(s + 1)}} (1 - \delta_{\xi ,o}) \Big)\widehat{f}(\xi) \chi_p (\xi x).$$See \cite{zuniga2, p-adicball1, p-adicball2, pseudosvinlekin, zuniga1} for more information about the Vladimirov-Taibleson operator and \cite{Onneweer1977, Onneweer1978, Dyadic, harmonicfractalanalysis} for its relation with the Gibbs derivative.
\begin{defi}\normalfont
Throughout this paper we will use sometimes the symbol ``$a \lesssim b$'' to indicate that the quantity ``$a$'' is less or equal than a certain constant times the quantity ``$b$''. Also, if $E,F$ are Banach spaces, we denote by $\mathcal{L}(E,F)$ the collection bounded linear operators $T:E \to F$. $\mathfrak{K}(E,F)$ will denote the collection of compact linear operators $T:E \to F$.
\end{defi}
\section{\textbf{Vladimirov operator and harmonic analysis on $\Z_p$}}
The study of the Vladimirov operator on a $p$-adic ball was initiated on \cite{fisicapadica}, and is also considered by Kochubei in \cite{p-adicball1, p-adicball2} restricting the action of the Vladimirov operator on functions supported in a ball $B_N:=\{x \in \Q_p : |x|_{p} \leq p^{N} \}$. In  \cite{kochubeibook} a probabilistic interpretation of this operator was given. Also, in \cite{2002J} ultrametric diffusion models constrained by hierarchical energy landscapes were analysed in terms of a master equation on a $p$-adic ball, using a different approach to that of Vladimirov on \cite{fisicapadica}, studying the modified singular integral operator $$D^s_N f (x) := \int_{B_N} \frac{f(y) - f(x)}{|y-x|^{s+1}_p} dy. $$This operator is distinct to the restricted Vladimirov operator on $B_N$, but it is more natural in the application of $p$-adic analysis to physics and other areas \cite{2002J, Avetisov2014, 2009JPhA...42h5003A} and possess an important feature: its eigenfunctions coincide with the $p$-adic characters related to the Fourier analysis and the Pontryagin duality of the compact group $B_N$. In this case one can use the unitary irreducible representations of $B_N$, which are the characters of the group, to perform a Fourier analysis as in \cite{nonharm, Ruzhansky2018} with the model operator $D^s$ and moreover, this analysis allows one to define suitable pseudo-differential operators similar to the toroidal case, besides the lack of derivatives (in a classical sense) on the $p$-adic variable. For these reasons we will work with the Vladimirov operator in the sense of \cite{p-adicbasis} acting on functions defined, by a matter of simplicity, on the $p$-adic unit ball $\Z_p$, also known as the compact group of $p$-adic integers.

\begin{defi}[Vladimirov operator]\normalfont\label{defvladimirov}
Let $s>0$. We will call \emph{Vladimirov operator} to the linear operator $D^s$ acting on functions defined on $\Z_p$ by the formula $$D^s f (x) := - \frac{1}{\Gamma_p (- s)}\int_{\Z_p} \frac{f(y) - f(x)}{|y-x|^{s+1}_p} dy,$$ where $|\cdot|_p$ is the usual $p$-adic absolute value, $\Gamma_p$ is the $p$-adic gamma function given by formula $$\Gamma_p (-s):= \frac{1-p^{-s-1}}{1-p^{s}},$$ and $dy$ is the normalised Haar measure on $\Z_p$. 
\end{defi}
As it is discussed on \cite[Lemma 1]{p-adicbasis}, the eigenfunctions of the above defined operator operator are given by the $p$-adic characteres $\chi_p (\xi x)$ with $\xi  \in \widehat{\Z}_p \cong  \Q_p / \Z_p$, and corresponding eigenvalues $$|\xi |_p^s +  \frac{1- p^{-1}}{1 - p^{-(s + 1)}} (1 - \delta_{\xi ,o}),$$where $\delta_{\xi ,o}$ is the usual Kronecker delta.  
As we know, any continuous additive character of $\Q_p$ has the form $x \to \chi_p (\xi x)$, $\xi  \in \Q_p$. By the duality theorem (see for example \cite[Theorem 27]{pontryagin}) the dual group $\widehat{\Z}_p$ is isomorphic to the discrete group $\Q_p / \Z_p $ consisting of the cosets $$ p^{-m}(r_0 + ...+ r_{m-1} p ^{m-1}) + \Z_p, \esp \esp r_j \in \{0,...,p-1 \}, \esp m \in \N_0.$$ Analytically, this isomorphism means that any nontrivial continuous character of $\Z_p$ has the form $\chi_p (\xi x), x \in \Z_p$, where $|\xi |_p >1$ and $\xi  \in \Q_p$ is considered as a representative of the class $\xi  + \Z_p$. Note that $|\xi |_p$ does not depend on the choice of a representative of the class. By the Peter-Weyl theorem \cite[Theorem 7.5.14]{ruzhansky1} the characters form an orthonormal basis of $L^2 (\Z_p)$, and then provide us the decomposition of $L^2 (\Z_p)$ as a direct sum of a sequence of one-dimensional sub-spaces, needed to have a notion of Fourier analysis.

\section{\textbf{Symbol classes and amplitudes}} By the Peter-Weyl theorem we have a decomposition of $L^2 (\Z_p)$ as the direct sum of the eigenspaces of the Vladimirov operator associated to a given eigenvalue. However, for simplicity, we will think on $L^2 (\Z_p)$ as the direct sum of the one dimensional spaces spanned by each $p$-adic character. The Fourier transform and its inverse in this context are given by $$ \widehat{f} (\xi) = \mathcal{F}_{\Z_p} f (\xi ) := \int_{\Z_p} f(x) \overline{\chi_p ( \xi  x)} dx, \esp \esp \text{and} \esp \esp \mathcal{F}_{\Z_p}^{-1} \varphi (x) := \sum_{\xi  \in\widehat{\Z_p}} \varphi (\xi ) \chi_p (\xi x),$$ where $dx$ is the  normalised Haar measure on $\Z_p$. We will measure the growth of the Fourier coefficients of a function by comparison with the weight $\langle \xi \rangle := \max\{1,|\xi |_p \}$, and then the corresponding definition of Sobolev spaces and smooth function, from the perspective of \cite{Ruzhansky2018}, is:
 
 \begin{defi}\label{defp-adicsobolev}\normalfont
 For our purposes the \emph{Sobolev spaces} $H^s (\Z_p)$ are the metric completion of $$\hb^\infty :=Span\{\chi_p (\xi x)\}_{\xi  \in \widehat{\Z}_p},$$ with the norm $$||f||_{H^s (\Z_p)} := ||\langle \xi  \rangle^{s} \widehat{f} (\xi )||_{\ell^2 (\widehat{\Z}_p)}.$$ We will call \emph{smooth functions over} $\Z_p$ to the elements of the class$$C^\infty (\Z_p) := \bigcap_{s \in \R} H^s (\Z_p).$$The class of \emph{distributions on} $\Z_p$ is defined as $$\mathfrak{D}(\Z_p) := \bigcup_{s \in \R} H^s (\Z_p).$$ The class $C^{\infty} (\Z_p \times \Z_p)$ is defined as $$C^{\infty} (\Z_p \times \Z_p):=\Big\{g \in L^2 (\Z_p \times \Z_p) \esp : \Big|\esp \int_{\Z_p \times \Z_p} g(x,y) \overline{\chi_p(\xi x)} \overline{\chi_p(\eta y)} dy dx\Big| \leq C_{g, s_1,s_2 } \langle \xi \rangle^{s_1} \langle \eta \rangle^{s_2} \Big\},$$for all $s_1 , s_2 \in \R$. We will also use the notation $$J_s f (x):= \sum_{\xi \in \widehat{\Z}_p} \langle \xi \rangle^s \widehat{f}(\xi) \chi_p (\xi x).$$
 \end{defi}
 
For the above defined Sobolev spaces we can prove the following embedding lemma:

\begin{lema}\label{p-adicl1Hsnorms}
Let $s > 1/2$ be a given real number. Then $H^s (\Z_p)$ is contained in $L^\infty (\Z_p)$ with continuous inclusion.
\end{lema}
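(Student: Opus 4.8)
The plan is to run the classical Sobolev embedding argument adapted to the Fourier analysis on $\Z_p$. Given $f \in \hb^\infty$, write its Fourier series $f(x) = \sum_{\xi \in \widehat{\Z}_p} \widehat{f}(\xi) \chi_p(\xi x)$ and estimate pointwise, using that $|\chi_p(\xi x)| = 1$ and the Cauchy--Schwarz inequality,
$$|f(x)| \leq \sum_{\xi \in \widehat{\Z}_p} |\widehat{f}(\xi)| = \sum_{\xi \in \widehat{\Z}_p} \langle \xi \rangle^{-s} \, \langle \xi \rangle^{s} |\widehat{f}(\xi)| \leq \Big( \sum_{\xi \in \widehat{\Z}_p} \langle \xi \rangle^{-2s} \Big)^{1/2} \, \|f\|_{H^s (\Z_p)},$$
where the last factor is exactly the $H^s$-norm by Definition \ref{defp-adicsobolev}. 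So everything reduces to showing that the constant $C_s := \big( \sum_{\xi} \langle \xi \rangle^{-2s} \big)^{1/2}$ is finite when $s > 1/2$. Taking the supremum over $x \in \Z_p$ then yields $\|f\|_{L^\infty (\Z_p)} \leq C_s \|f\|_{H^s (\Z_p)}$ on the dense subspace $\hb^\infty$; and since for such $f$ the bound already shows $\sum_{\xi} |\widehat{f}(\xi)| < \infty$, the Fourier series converges absolutely and uniformly, so for a general element of the completion $H^s (\Z_p)$ the limit is (a.e. equal to) an honest continuous, hence bounded, function, and the inequality passes to the completion.

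The remaining point --- the only place where the arithmetic of $\Z_p$ really enters --- is the evaluation of $\sum_{\xi \in \widehat{\Z}_p} \langle \xi \rangle^{-2s}$. For this I would use the description of $\widehat{\Z}_p \cong \Q_p / \Z_p$ recalled in Section 3: a nontrivial character corresponds to a coset $p^{-m}(r_0 + r_1 p + \dots + r_{m-1}p^{m-1}) + \Z_p$ with $m \geq 1$, $r_j \in \{0,\dots,p-1\}$ and $r_0 \neq 0$, and such a coset has $|\xi|_p = p^m$, hence $\langle \xi \rangle = p^m$. Counting the admissible tuples $(r_0,\dots,r_{m-1})$ gives exactly $(p-1)p^{m-1}$ cosets with $\langle \xi \rangle = p^m$, plus the single trivial character with $\langle \xi \rangle = 1$. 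Therefore
$$\sum_{\xi \in \widehat{\Z}_p} \langle \xi \rangle^{-2s} = 1 + \sum_{m=1}^{\infty} (p-1) p^{m-1} p^{-2sm} = 1 + \frac{p-1}{p} \sum_{m=1}^{\infty} p^{(1-2s)m},$$
which is a convergent geometric series precisely when $p^{1-2s} < 1$, i.e. when $s > 1/2$, with explicit value $C_s^2 = 1 + \tfrac{p-1}{p}\cdot \tfrac{p^{1-2s}}{1-p^{1-2s}}$.

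I do not anticipate any serious obstacle: the argument is a routine Cauchy--Schwarz estimate plus a geometric-series computation, and the threshold $s > 1/2$ emerges exactly as in the one-dimensional torus case where the analogous sum is $\sum_{n \in \Z} \langle n \rangle^{-2s}$. The only thing to be careful about is the logical order --- first establish the inequality on $\hb^\infty$, then invoke absolute convergence of the Fourier series to identify the $H^s$-limit with a genuine element of $L^\infty (\Z_p)$ (in fact of $C(\Z_p)$) --- so that the inclusion $H^s (\Z_p) \hookrightarrow L^\infty (\Z_p)$ is well defined and bounded with operator norm at most $C_s$.
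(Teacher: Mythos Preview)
Your proposal is correct and follows essentially the same route as the paper: bound $\|f\|_{L^\infty}$ by $\|\widehat f\|_{\ell^1}$ and then apply Cauchy--Schwarz with the weight $\langle\xi\rangle^{\pm s}$ to reach $C_s\|f\|_{H^s}$. Your version is in fact more complete than the paper's, which stops at $\big(\sum_{\xi}\langle\xi\rangle^{-2s}\big)^{1/2}\|f\|_{H^s}$ without verifying that the sum converges; your counting argument over the cosets of $\Q_p/\Z_p$ and the resulting geometric series fill exactly that gap and also make the threshold $s>1/2$ visible, and your remark on passing from $\hb^\infty$ to the completion is a point the paper leaves implicit.
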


\begin{proof}
Just notice that by Plancherel formula we have  $$||\widehat{f}||_{\ell^{{2}} (\widehat{\Z}_p)} = || f||_{L^2 (\Z_p)}.$$ So we get  \begin{align*}
    ||f||_{L^\infty (\Z_p)} \leq ||\widehat{f}||_{\ell^1 (\widehat{\Z}_p)} &\leq \big( \sum_{\xi \in \widehat{\Z}_p} \langle \xi \rangle^{-2s} \big)^{1/2}||  \langle \xi \rangle^s \widehat{f}||_{\ell^{{2}} (\widehat{\Z}_p)}\\ & = \big( \sum_{\xi \in \widehat{\Z}_p} \langle \xi \rangle^{-2s} \big)^{1/2} || f||_{H^s (\Z_p)},
\end{align*}which conclude the proof.
\end{proof}
We state here the discrete version of Taylor expansion because of its relevance for the rest of the work.

\begin{teo}[Discrete Taylor expansion on $\Z$]
Let $\varphi: \Z \to \C$ be any given function. Then for $u,v \in \Z$ we can write $$\varphi(u+v) = \sum_{n < M} \frac{1}{n!} v^n \Delta^n \varphi (u) + r_M (u,v),$$with the remainder satisfying $$|\Delta^s r_M (u,v)| \leq C_M \max_{\nu \in Q(v)} |v^M \Delta^{M+s} \varphi (u + \nu)| , $$where $Q(v):=\{\nu \in \Z \esp : \esp |\nu| \leq |v|\}$, and $\Delta \varphi (u) := \varphi (u+1) - \varphi (u)$ is the usual difference operator. See \cite{ruzhansky1} for more details.
\end{teo}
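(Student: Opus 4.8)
The plan is to reduce everything to an exact closed form for the remainder as a finite weighted sum of $M$-th differences, and then estimate that sum. Throughout I write $E\varphi(u):=\varphi(u+1)=(I+\Delta)\varphi(u)$ for the shift operator, and I read the discrete monomial $v^n$ in the expansion as the falling factorial $v(v-1)\cdots(v-n+1)$, so that $\tfrac{1}{n!}v^n=\binom{v}{n}$ (this is the convention of \cite{ruzhansky1}, and the only reading for which the estimate can hold: for a literal power it already fails, e.g.\ for $\varphi(k)=k^2$, $M=3$, $s=0$).

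First I would establish an \textbf{exact remainder identity}. For $v\ge 0$, iterating the telescoping identity $\psi(u+v)-\psi(u)=\sum_{j=0}^{v-1}\Delta\psi(u+j)$ successively on $\psi=\varphi,\Delta\varphi,\dots,\Delta^{M-1}\varphi$ and collapsing the resulting nested sums with the hockey-stick identity $\sum_{i=0}^{N-1}\binom{i}{k}=\binom{N}{k+1}$ gives
$$\varphi(u+v)=\sum_{n<M}\binom{v}{n}\Delta^n\varphi(u)+\sum_{j=0}^{v-1}\binom{v-1-j}{M-1}\Delta^M\varphi(u+j),$$
which identifies $r_M(u,v)=\sum_{j=0}^{v-1}\binom{v-1-j}{M-1}\Delta^M\varphi(u+j)$. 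For $v<0$, set $w:=-v\ge 1$ and iterate instead the backward telescoping $\psi(u)-\psi(u-w)=\sum_{k=1}^{w}\Delta\psi(u-k)$; the same computation yields $r_M(u,v)=(-1)^M\sum_{k=1}^{w}\binom{w-k+M-1}{M-1}\Delta^M\varphi(u-k)$. Both identities are most safely verified by induction on $M$, the base case $M=1$ being the telescoping identity itself; alternatively one can reduce the case $v\ge 0$ to the case $v<0$ by replacing $\varphi$ with $u\mapsto\varphi(-u)$, but treating the two cases in parallel is cleaner.

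Then I would differentiate and estimate. Since $\Delta$ acts in the $u$ variable and commutes with every shift $u\mapsto u+\nu$, applying $\Delta^s$ to either formula merely turns $\Delta^M\varphi$ into $\Delta^{M+s}\varphi$ inside the sum, and each argument occurring is $u+\nu$ with $\nu\in\{0,\dots,v-1\}$ respectively $\nu\in\{-w,\dots,-1\}$, hence $\nu\in Q(v)$. Pulling $\max_{\nu\in Q(v)}|\Delta^{M+s}\varphi(u+\nu)|$ out of the sum leaves the total mass of the (nonnegative) binomial weights, which by the hockey-stick identity equals $\binom{v}{M}$ when $v\ge 0$ and $\binom{w+M-1}{M}$ when $v<0$. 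Since $M!\binom{v}{M}=v(v-1)\cdots(v-M+1)$ for $v\ge 0$ and $M!\binom{w+M-1}{M}=w(w+1)\cdots(w+M-1)=|v(v-1)\cdots(v-M+1)|$ for $v=-w<0$, this mass is in both cases exactly $\tfrac{1}{M!}|v^M|$, and the desired estimate follows with $C_M=1/M!$.

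The only real work is bookkeeping: getting the combinatorial remainder identity exactly right, keeping the two signs of $v$ straight, and checking that the relevant shifts stay inside $Q(v)$. The hockey-stick identity handles everything else, so I do not expect a genuine obstacle beyond care with indices.
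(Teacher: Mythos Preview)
The paper does not give its own proof of this statement; it merely records the theorem and refers the reader to \cite{ruzhansky1}. So there is nothing in the paper to compare against directly.

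Your argument is correct. The crucial observation, which you make explicitly, is that the monomial $v^n$ must be read as the falling factorial $v(v-1)\cdots(v-n+1)$; this is indeed the convention in \cite{ruzhansky1}, and without it the claimed remainder bound is simply false. Given that reading, your derivation of the exact remainder
\[
r_M(u,v)=\sum_{j=0}^{v-1}\binom{v-1-j}{M-1}\Delta^M\varphi(u+j)\qquad(v\ge 0),
\]
and its mirror for $v<0$, is clean and correct: the induction on $M$ via telescoping plus the hockey-stick identity goes through exactly as you describe. Applying $\Delta^s$ in $u$ only replaces $\Delta^M\varphi$ by $\Delta^{M+s}\varphi$, the shifts $j\in\{0,\dots,v-1\}$ (resp.\ $-k\in\{-|v|,\dots,-1\}$) lie in $Q(v)$, and the total binomial mass equals $\binom{v}{M}$ (resp.\ $\binom{|v|+M-1}{M}$), which in both cases is $\tfrac{1}{M!}|v^M|$ for the falling factorial. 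This gives $C_M=1/M!$.

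This is essentially the argument one finds in \cite[Theorem~3.3.21]{ruzhansky1}, so your proof fills in precisely what the paper outsources. The only cosmetic point is that your claim about reducing $v\ge 0$ to $v<0$ by $\varphi\mapsto\varphi(-\cdot)$ would require tracking how forward differences transform under reflection; your parallel treatment of the two signs is indeed the cleaner route, and it is complete as written.
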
Also, we state an inequality that will be used along this work.

\begin{pro}[Peetre inequality]
For every $\xi , \eta \in \widehat{\Z}_p$ and every $s \in \R$ we have $$\langle \xi +\eta \rangle^s \lesssim \langle \xi \rangle^{|s|} \langle \eta \rangle^s.$$
\end{pro}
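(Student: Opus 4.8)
The plan is to leverage the ultrametric structure of $\Q_p$, which in the non-archimedean setting makes this estimate essentially trivial (with implied constant $1$), in sharp contrast to the classical Peetre inequality on $\R^n$ or $\Z^n$. The starting point is that the bracket $\langle\,\cdot\,\rangle$ on $\widehat{\Z}_p \cong \Q_p/\Z_p$ is itself strongly submultiplicative. Since $|\cdot|_p$ is constant on every nontrivial coset of $\Z_p$ in $\Q_p$ and is $\leq 1$ on $\Z_p$ itself, one has $\langle\xi\rangle = \max\{1,|\xi'|_p\}$ for an arbitrary representative $\xi'\in\Q_p$ of $\xi$; then, picking representatives $\xi',\eta'$ and invoking the strong triangle inequality $|\xi'+\eta'|_p \leq \max\{|\xi'|_p,|\eta'|_p\}$, we get
$$\langle\xi+\eta\rangle = \max\{1,|\xi'+\eta'|_p\} \leq \max\{\langle\xi\rangle,\langle\eta\rangle\} \leq \langle\xi\rangle\,\langle\eta\rangle,$$
the last step using $\langle\xi\rangle,\langle\eta\rangle\geq 1$. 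I would also record the elementary identity $\langle-\xi\rangle=\langle\xi\rangle$, which follows from $|-\xi'|_p=|\xi'|_p$.

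With this in hand I would split on the sign of $s$. For $s\geq 0$, raising the displayed inequality to the power $s$ gives directly $\langle\xi+\eta\rangle^s \leq \langle\xi\rangle^s\langle\eta\rangle^s = \langle\xi\rangle^{|s|}\langle\eta\rangle^s$. For $s<0$, set $t=|s|=-s>0$; applying submultiplicativity to the pair $(\xi+\eta,\,-\xi)$ together with $\langle-\xi\rangle=\langle\xi\rangle$ yields $\langle\eta\rangle \leq \langle\xi+\eta\rangle\,\langle\xi\rangle$, and raising to the power $t\geq 0$ and rearranging (all quantities are positive) gives $\langle\xi+\eta\rangle^{-t} \leq \langle\xi\rangle^{t}\langle\eta\rangle^{-t}$, that is, $\langle\xi+\eta\rangle^{s} \leq \langle\xi\rangle^{|s|}\langle\eta\rangle^{s}$. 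In both cases the inequality holds with constant $1$, which proves the proposition.

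I do not anticipate any genuine obstacle here: the only points requiring a moment's care are that $\langle\,\cdot\,\rangle$ is well defined on cosets and even, and that one must keep track of the direction of the inequality when moving powers across it in the case $s<0$. The reason no further work is needed — unlike in the archimedean case, where one argues by cases on whether $|\xi|$ or $|\eta|$ dominates and only controls $(1+|\xi+\eta|)$ by $(1+|\xi|)(1+|\eta|)$ up to constants — is precisely that the $p$-adic triangle inequality already produces a maximum rather than a sum, so the submultiplicativity of $\langle\,\cdot\,\rangle$ is exact.
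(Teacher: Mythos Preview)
Your argument is correct. The paper does not actually supply a proof of this proposition; it is simply stated as a known inequality to be used later. Your write-up therefore fills a gap rather than duplicating or diverging from anything in the text. The key observation you make --- that the ultrametric inequality gives $\langle\xi+\eta\rangle\leq\max\{\langle\xi\rangle,\langle\eta\rangle\}\leq\langle\xi\rangle\langle\eta\rangle$ with constant $1$ --- is exactly right, and your treatment of the two sign cases for $s$ (raising directly for $s\geq 0$, and applying submultiplicativity to the pair $(\xi+\eta,-\xi)$ then inverting for $s<0$) is clean and complete. The remark that the implied constant is in fact $1$, strictly sharper than in the archimedean Peetre inequality, is a nice bonus.
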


As we already mentioned in the $\Z_p$ setting we can put linear operators in terms of a symbol as in the following definition. 

\begin{defi}\normalfont
Given a densely defined linear operator $$T: \hb^\infty  \subset Dom(T) \subseteq L^2 (\Z_p) \to L^2 (\Z_p),$$ we define its associated symbol as $$\sigma_T (x,\xi ) := \overline{ \chi_p (\xi x)} T (\chi_p (\xi x)).$$
\end{defi} 

Now we want to show that it is possible to define a pseudo-differential calculus on $\Z_p$, analogous to the case of $\Td$, for a certain class of linear operators. For this we need suitable symbols and difference operators so, we proceed to introduce some definitions.

\begin{defi}\label{p-adicdifference}\normalfont
Given a function $\varphi : \Q_p \to \C$ in the form $\varphi ( x) = f (|x|_p)$, where $f$ is a function defined on all $\Z$, let us define the difference operator $\Delta$ acting on functions $\varphi$ as the usual difference operator$$\Delta \varphi (x) := f (|x|_p  + 1) - f (|x|_p)= \Delta f \circ | \cdot |_p (x).$$ Also, for functions $\sigma : \Z_p \times \widehat{\Z}_p \to \C$, we define the ``derivative operator" $\partial_x^h$ by the formula $$\partial_x^h \sigma (x,\xi ) := \sum_{\eta \in \widehat{\Z}_p} (|\eta-\xi |_p - |\xi |_p)^h \widehat{\sigma} (\eta,\xi ) \chi(\eta x).$$ 
\end{defi}
\begin{rem}
For every $\xi \in \widehat{\Z}_p$ the following inequality holds: $$||\partial^h_x \sigma(\cdot , \xi)||_{L^2 (\Z_p)} \lesssim||D^h \sigma (\cdot , \xi)||_{L^2 (\Z_p)}.$$
\end{rem}
We proceed then to introduce now two new versions of the Hörmander classes corresponding to $\Z_p$.

\begin{defi}\label{p-adicHOrmanderclasses} \normalfont
Let $0 \leq \delta < \rho \leq 1$ be given real numbers. We define the Hörmander classes $S^m_{\rho, \delta} (\Z_p \times \widehat{\Z}_p)$ as the collection of complex valued functions $\sigma (x , |\xi |_p)$ defined on $\Z_p \times \widehat{\Z}_p$ that satisfy the following estimate $$| \Delta_\xi^\alpha D_x^\beta \sigma (x,|\xi |_p) | \leq C_{\sigma,m, \alpha, \beta}\langle \xi  \rangle^{m -\rho \alpha +  \delta \beta},$$ for every $\alpha, \beta \in \N_0.$ We will denote by $Op(S^m_{\rho, \delta} (\Z_p \times \widehat{\Z}_p))$ the class of pseudo-differential operators with symbol in the H{\"o}rmander class $S^m_{\rho, \delta} (\Z_p \times \widehat{\Z}_p)$. Furthermore, we define $$S^{-\infty} (\Z_p \times \widehat{\Z}_p) := \bigcap_{m \in \R} S^m_{0,0} (\Z_p \times \widehat{\Z}_p),$$ $$S^\infty_{\rho, \delta} (\Z_p \times \widehat{\Z}_p) := \bigcup_{m \in \R} S^m_{\rho, \delta} (\Z_p \times \widehat{\Z}_p).$$Sometimes we will denote by $Op(S^\infty_{\rho, \delta} (\Z_p \times \widehat{\Z}_p))$ the class of pseudo-differential operators with symbol in the class $S^\infty_{\rho, \delta} (\Z_p \times \widehat{\Z}_p)$
\end{defi}
\begin{rem}
There already exists a notion of symbol classes on $\Z_p$ which is a particular case of the definition given by L. Saloff-Coste in \cite{pseudosvinlekin} and Weiyi Su in \cite{harmonicfractalanalysis, padic2} for Vilenkin groups. The above definition of H{\"o}rmander classes intend to define a symbolic calculus in the same way as \cite{ruzhansky1} and, as we will see in Section 5, allow us to define asymptotic expansions for symbols, among other features. However, we have to point to the fact that this definition does not consider the special properties given by the ultrametric induced by the $p$-adic absolute value $|\cdot|_p$, in contrast with the definitions of L. Saloff-Coste and Weiyi Su. So, even when is reasonable to expect some similarity between the theory on $\Z_p$ and the theory on $\T^d$, both compact abelian groups, it is also reasonable to find essential differences because of the dissimilarity on its topologies. This important observation motivates the following second definition of H{\"o}rmander classes that, unlike the first, considers the special properties of $\Z_p$ as the definition of Saloff-Coste and Weiyi Su. We affirm that this definition is appropriate due to the relationship that exists between the classes defined below and some infinite matrix algebras studied by Gr{\"o}chenig \cite{Grochenig2010, Grochenig2010, Grochenig2010wiener} and S. Jaffard \cite{Jaffard}. We will discuss it in detail in Section 6.      
\end{rem}
\begin{defi}\label{hormanderclasses2}\normalfont
    \esp
  \begin{enumerate}
    \item[(i)] For functions $\varphi: \widehat{\Z}_p \to \C$ let us define the difference operator $\triangleplus$ as $$\triangleplus_\eta^\xi \varphi (\xi) := \varphi(\xi + \eta) - \varphi (\xi).$$
    \item[(ii)] Let $m \in \R$ and $0 \leq \delta \leq \rho \leq 1$ be given real numbers. We define the symbol classes $\Tilde{S}^m_{\rho, \delta} (\Z_p \times \widehat{\Z}_p)$ as the collection of measurable functions $\sigma: \Z_p \times \widehat{\Z}_p \to \C$ such that, for all $\alpha , \beta \in \N_0$, the estimate $$|D^\beta_x \triangleplus_\eta^\xi \sigma (x, \xi )| \leq C_{\sigma, m, \alpha, \beta} |\eta|^\alpha_p \langle \xi \rangle^{m - \rho \alpha + \delta \beta } , $$ holds for some $C_{\sigma, m, \alpha, \beta,}>0$, and any $(x, \xi,\eta) \in \Z_p \times \widehat{\Z_p} \times \widehat{\Z_p},$ $|\eta|_p \leq \langle \xi \rangle.$ As before, we will denote by $Op(\Tilde{S}^m_{\rho, \delta} (\Z_p \times \widehat{\Z}_p))$ the class of pseudo-differential operators with symbol in the H{\"o}rmander class $\tilde{S}^m_{\rho, \delta} (\Z_p \times \widehat{\Z}_p)$. Furthermore, we define$$\Tilde{S}^{-\infty} (\Z_p \times \widehat{\Z}_p) := \bigcap_{m \in \R} \Tilde{S}^m_{0,0} (\Z_p \times \widehat{\Z}_p),$$ $$\Tilde{S}^\infty_{\rho,\delta} (\Z_p \times \widehat{\Z}_p) := \bigcup_{m \in \R} \Tilde{S}^m_{\rho, \delta} (\Z_p \times \widehat{\Z}_p).$$Sometimes we will denote by $Op(\Tilde{S}^\infty_{\rho, \delta} (\Z_p \times \widehat{\Z}_p))$ the class of pseudo-differential operators with symbol in the class $\Tilde{S}^\infty_{\rho, \delta} (\Z_p \times \widehat{\Z}_p)$ 
  \end{enumerate}
 \end{defi}
\begin{rem}
We can easily se that $ S^m_{0 ,0} (\Z_p \times \widehat{\Z}_p) \subset \Tilde{S}^m_{0, 0} (\Z_p \times \widehat{\Z}_p) $.
\end{rem}
Observe that the above defined symbol classes include operators of the form $$P(x , D) := \sum_{i=1}^n a_i (x) D^{s_i},$$ where $D^{s_i}$ is the Vladimirov operator of order $s_i > 0$, and the functions $a_i$ belong to $C^{\infty} (\Z_p)$. These operators may be thought of as analogues of partial differential operators. Clearly, their associated symbol is given by  $$P(x , |\xi |_p) := \sum_{i=1}^n a_i (x) ( |\xi |_p^{s_i} + \frac{1- p^{-1}}{1 - p^{-(s + 1)}} (1 - \delta_{\xi ,o})),$$ and they define continuous operators on $C^\infty (\Z_p)$ and between Sobolev spaces. The later statement is indeed a consequence of a more general fact:

\begin{pro}\label{L2boundedness}
Let $T_\sigma$ be a pseudo-differential operator with symbol $\sigma (x , \xi)$, and let $s \in \R$ a given real number. Assume that $$|| \sigma (\cdot , \xi )||_{H^{\beta + |s|} (\Z_p)} \leq C \langle \xi \rangle^{m},$$ for $ \beta >1/2$. Then $T_\sigma$ extends to a bounded operator from $H^{s+m} (\Z_p)$ to $H^{s} (\Z_p)$.
\end{pro}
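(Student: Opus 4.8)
The plan is to expand the symbol in its Fourier series in the $\Z_p$-variable, which realises $T_\sigma$ as an infinite sum of Fourier multipliers pre-composed with modulation operators, and then to sum the pieces by combining Peetre's inequality with a Cauchy--Schwarz estimate carried out in the $x$-frequency variable. It suffices to prove $\|T_\sigma f\|_{H^s(\Z_p)} \lesssim \|f\|_{H^{s+m}(\Z_p)}$ for $f \in \hb^\infty$; the bound then yields, by density, the asserted bounded extension.

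Writing $\sigma(x,\xi) = \sum_{\eta \in \widehat{\Z}_p}\widehat{\sigma}(\eta,\xi)\chi_p(\eta x)$ (the series converging in $H^{\beta+|s|}(\Z_p)$ with respect to $x$, with square norm $\|\sigma(\cdot,\xi)\|_{H^{\beta+|s|}(\Z_p)}^2 \leq C^2\langle\xi\rangle^{2m}$) and inserting this into the formula for $T_\sigma$ gives $T_\sigma = \sum_{\eta \in \widehat{\Z}_p} M_\eta A_\eta$, where $M_\eta$ is multiplication by the character $\chi_p(\eta x)$ and $A_\eta$ is the Fourier multiplier with symbol $\xi\mapsto\widehat{\sigma}(\eta,\xi)$. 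Computing Fourier coefficients, $M_\eta A_\eta f$ has coefficient $\widehat{\sigma}(\eta,\zeta-\eta)\widehat{f}(\zeta-\eta)$ at the frequency $\zeta$, so that, after reindexing and Peetre's inequality,
\[
\|M_\eta A_\eta f\|_{H^s(\Z_p)}^2 = \sum_{\xi \in \widehat{\Z}_p}\langle\xi+\eta\rangle^{2s}\,|\widehat{\sigma}(\eta,\xi)|^2\,|\widehat{f}(\xi)|^2 \lesssim \langle\eta\rangle^{2|s|}\sum_{\xi \in \widehat{\Z}_p}\langle\xi\rangle^{2s}\,|\widehat{\sigma}(\eta,\xi)|^2\,|\widehat{f}(\xi)|^2.
\]

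Next I would apply the triangle inequality in $H^s(\Z_p)$ to $T_\sigma f = \sum_\eta M_\eta A_\eta f$ and then Cauchy--Schwarz in $\eta$, splitting the weight as $\langle\eta\rangle^{|s|} = \langle\eta\rangle^{-\beta}\langle\eta\rangle^{\beta+|s|}$:
\begin{align*}
\|T_\sigma f\|_{H^s(\Z_p)} &\leq \sum_{\eta \in \widehat{\Z}_p}\|M_\eta A_\eta f\|_{H^s(\Z_p)} \lesssim \sum_{\eta \in \widehat{\Z}_p}\langle\eta\rangle^{|s|}\Big(\sum_{\xi \in \widehat{\Z}_p}\langle\xi\rangle^{2s}\,|\widehat{\sigma}(\eta,\xi)|^2\,|\widehat{f}(\xi)|^2\Big)^{1/2} \\
&\leq \Big(\sum_{\eta \in \widehat{\Z}_p}\langle\eta\rangle^{-2\beta}\Big)^{1/2}\Big(\sum_{\xi \in \widehat{\Z}_p}\langle\xi\rangle^{2s}\,|\widehat{f}(\xi)|^2\sum_{\eta \in \widehat{\Z}_p}\langle\eta\rangle^{2\beta+2|s|}\,|\widehat{\sigma}(\eta,\xi)|^2\Big)^{1/2},
\end{align*}
where in the last line I interchanged the (non-negative) sums in $\eta$ and $\xi$. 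The inner $\eta$-sum is exactly $\|\sigma(\cdot,\xi)\|_{H^{\beta+|s|}(\Z_p)}^2 \leq C^2\langle\xi\rangle^{2m}$, so the second factor is $\leq C\,\|f\|_{H^{s+m}(\Z_p)}$; the first factor is finite because $\beta > 1/2$, since there are exactly $(p-1)p^{k-1}$ characters with $\langle\eta\rangle = p^k$ and hence $\sum_\eta\langle\eta\rangle^{-2\beta}$ is comparable to $\sum_{k\geq 1}p^{k(1-2\beta)}<\infty$ (the same computation underlying Lemma \ref{p-adicl1Hsnorms}). Multiplying the two factors gives the claim.

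The one place that needs attention is the convergence of $\sum_\eta\langle\eta\rangle^{-2\beta}$ under only $\beta>1/2$: this is precisely why Cauchy--Schwarz in $\eta$ must be applied \emph{before} the sum over $\xi$ is estimated. If one instead bounded $\sum_\xi\langle\xi\rangle^{2s}|\widehat\sigma(\eta,\xi)|^2|\widehat f(\xi)|^2$ by $\big(\sup_\xi\langle\xi\rangle^{-2m}|\widehat\sigma(\eta,\xi)|^2\big)\|f\|_{H^{s+m}(\Z_p)}^2$ and only then summed over $\eta$, one would be forced onto the strictly slower-converging series $\sum_\eta\langle\eta\rangle^{-\beta}$, which requires $\beta>1$; keeping the $\xi$-sum intact and invoking Tonelli removes this loss. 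Everything else — the absolute convergence $\sum_\eta\|M_\eta A_\eta f\|_{H^s(\Z_p)}<\infty$ for $f\in\hb^\infty$ (so that the series indeed sums to $T_\sigma f$), the legitimacy of the interchange of sums, and the density extension to all of $H^{s+m}(\Z_p)$ — is routine.
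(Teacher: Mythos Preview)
Your proof is correct. The approach differs from the paper's, though the two are closely related and ultimately exploit the same convergence $\sum_{\eta}\langle\eta\rangle^{-2\beta}<\infty$ for $\beta>1/2$.

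The paper first reduces to $L^2$-boundedness of $A:=J_s T_\sigma J_{-(s+m)}$, then uses the classical ``freezing'' trick: it introduces the auxiliary family $A_y f(x):=\sum_\xi \sigma_A(y,\xi)\widehat f(\xi)\chi_p(\xi x)$, observes $Af(x)=A_xf(x)$, bounds $|A_xf(x)|\leq\|A_yf(x)\|_{L^\infty_y}$, and invokes the Sobolev embedding $H^\beta\hookrightarrow L^\infty$ (Lemma~\ref{p-adicl1Hsnorms}) in the $y$-variable; after Fubini and Plancherel this reduces to estimating $\|D^\beta_y\sigma_A(y,\xi)\|_{L^2_y}$, which is handled by Peetre's inequality. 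Your argument instead expands $\sigma(\cdot,\xi)$ in Fourier series in $x$, writes $T_\sigma=\sum_\eta M_\eta A_\eta$, and applies the triangle inequality followed by Cauchy--Schwarz in $\eta$ with the weight split $\langle\eta\rangle^{|s|}=\langle\eta\rangle^{-\beta}\langle\eta\rangle^{\beta+|s|}$. The Sobolev embedding used in the paper is itself proved by exactly this Cauchy--Schwarz step, so your route simply bypasses the intermediate reduction to $L^2$ and the freezing device, working directly in the $H^s$ norm. The paper's method is the standard Ruzhansky--Turunen/Agmon--Douglis--Nirenberg pattern and transports verbatim to noncommutative compact groups; your decomposition into modulation--multiplier pieces is more self-contained and makes the role of the hypothesis $\beta>1/2$ slightly more transparent (as your final paragraph emphasizes).
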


\begin{proof}
To begin with, $T_\sigma$ extends to a linear operator in $\mathcal{L}(H^{s+m} (\Z_p) , H^{s} (\Z_p^d))$ if and only if $A:=J_s T_\sigma J_{- (s+m)}$ extends to a bounded operator on $L^2 (\Z_p)$. The Symbol of $A$ is given by $$\sigma_A (x , \xi) = \frac{1}{\langle \xi \rangle^{s+m}}\sum_{\eta \in \widehat{\Z}_p} \langle \eta + \xi \rangle^{s} \widehat{\sigma} (\eta , \xi) \chi_p (\eta x). $$ Now let us define $$A_y f(x) := \sum_{\xi \in \widehat{\Z}_p} \sigma_A (y , \xi) \widehat{f}(\xi) \chi_p (\xi x ),$$ so $A_x f (x) = Af (x)$. Thus this give us $$||Af||_{L^2 (\Z_p)}^2 = \int_{\Z_p} |A_x f(x)|^2 dx \leq \int_{\Z_p} ||A_y f(x)||^2_{L^\infty_y (\Z_p)} dx,$$ which combined with the Sobolev embedding theorem leads to $$ \int_{\Z_p} ||A_y f(x)||^2_{L^\infty_y (\Z_p)} dx 
\leq \int_{\Z_p} \int_{\Z_p} |D_y^\beta A_y f(x)|^2 dy dx.$$Changing the order of integration we obtain for $f \in \hb^\infty$ \begin{align*}
    ||T_\sigma f||^2_{L^2 (\Z_p)} &\lesssim \int_{\Z_p} \int_{\Z_p} |D_y^\beta A_y f(x)|^2 dy dx \\ &=  \int_{\Z_p} \int_{\Z_p} |D_y^\beta A_y f(x)|^2 dx dy \\ &=  \int_{\Z_p}\Big( \sum_{\xi \in \widehat{\Z}_p } |D^\beta_y \sigma_A (y , \xi)|^2 |\widehat{f}(\xi)|^2\Big) dy \\ &= \sum_{\xi \in \widehat{\Z}_p } \int_{\Z_p} |D^\beta_y \sigma_A (y , \xi)|^2 dy |\widehat{f}(\xi)|^2.
\end{align*}
Observe that $$D^\beta_y \sigma_A (y , \xi) = \frac{1}{\langle \xi \rangle^{s+m}}\sum_{\eta \in \widehat{\Z}_p} | \eta |^\beta_p \langle \eta + \xi \rangle^{s} \widehat{\sigma} (\eta , \xi) \chi_p (\eta y).$$ Thus using Peetre inequality \begin{align*}
    ||D^\beta_y \sigma_A (y , \xi)||_{L^2_y (\Z_p)}^2 &= \frac{1}{\langle \xi \rangle^{2(s+m)}}\sum_{\eta \in \widehat{\Z}_p} | \eta |^{2\beta}_p \langle \eta + \xi \rangle^{2s} |\widehat{\sigma}(\eta , \xi)|^2 \\ & \lesssim   \frac{1}{\langle \xi \rangle^{2m}} \sum_{\eta \in \widehat{\Z}_p} \langle \eta \rangle^{2(\beta+|s|)}  |\widehat{\sigma}(\eta , \xi)|^2 \\ &\lesssim  \frac{1}{\langle \xi \rangle^{2m}}||D_y^{\beta + |s|} \sigma (y, \xi)||_{L^2_y (\Z_p)}^2 \leq C.
\end{align*}Then finally $$||T_\sigma f||_{L^2 (\Z_p)} \leq C ||f||_{L^2 (\Z_p)},$$concluding the proof.
\end{proof}

\begin{coro}\label{coroboundedhormanderclasses}
Let $T_\sigma$ be a pseudo-differential operator with symbol $\sigma \in \tilde{S}^m_{0, 0} (\Z_p \times \widehat{\Z}_p)$. Then $T_\sigma \in \mathcal{L}(H^{s+m} (\Z_p) , H^{s} (\Z_p))$ for every $s \in \R$.
\end{coro}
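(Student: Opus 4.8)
The strategy is to derive the statement directly from Proposition \ref{L2boundedness}, so the entire task reduces to checking its hypothesis for symbols in $\tilde{S}^m_{0,0}(\Z_p \times \widehat{\Z}_p)$. Fix $s \in \R$. Since any $\beta > 1/2$ is admissible in Proposition \ref{L2boundedness}, I would take $\beta := 1$ together with an integer $N \ge 1 + |s|$. Because $\langle \eta \rangle \ge 1$ for every $\eta \in \widehat{\Z}_p$ we have $\langle \eta \rangle^{1+|s|} \le \langle \eta \rangle^{N}$, hence $\norm{g}_{H^{1+|s|}(\Z_p)} \le \norm{g}_{H^{N}(\Z_p)}$; comparing the eigenvalues of $D^N_x$ with the weight $\langle \eta \rangle^{N}$ (they are comparable off the trivial character, whose mode is dominated by $\norm{g}_{L^2(\Z_p)}$) gives $\norm{g}_{H^N(\Z_p)} \lesssim \norm{g}_{L^2(\Z_p)} + \norm{D^N_x g}_{L^2(\Z_p)}$; and since the normalised Haar measure on $\Z_p$ is a probability measure, $\norm{\cdot}_{L^2(\Z_p)} \le \norm{\cdot}_{L^\infty(\Z_p)}$. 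Putting these together, it suffices to prove the two uniform estimates
\[
\norm{\sigma(\cdot,\xi)}_{L^\infty(\Z_p)} \lesssim \langle \xi \rangle^{m}
\qquad\text{and}\qquad
\norm{D^N_x \sigma(\cdot,\xi)}_{L^\infty(\Z_p)} \lesssim \langle \xi \rangle^{m}
\]
for all $\xi \in \widehat{\Z}_p$; feeding them into Proposition \ref{L2boundedness} yields $T_\sigma \in \mathcal{L}(H^{s+m}(\Z_p), H^s(\Z_p))$, and as $s$ is arbitrary we are done.

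To obtain the two $L^\infty$ bounds I would invoke the defining inequalities of $\tilde{S}^m_{0,0}(\Z_p \times \widehat{\Z}_p)$ (that is, $\rho = \delta = 0$) with difference order $\alpha = 0$ and $x$-derivative order $\beta \in \{0, N\}$. For $\xi \neq 0$ one has $|-\xi|_p = |\xi|_p = \langle \xi \rangle$, so the pair $(\xi,-\xi)$ meets the admissibility condition $|\eta|_p \le \langle\xi\rangle$, and the definition gives, for all $x \in \Z_p$,
\[
\bigl| D^\beta_x \sigma(x,\xi) - D^\beta_x \sigma(x,0) \bigr| \;=\; \bigl| D^\beta_x \triangleplus_{-\xi}^{\xi}\sigma(x,\xi) \bigr| \;\le\; C_{\sigma,m,0,\beta}\,\langle \xi \rangle^{m}.
\]
Since $\sigma(\cdot,0)$ is a fixed smooth function on $\Z_p$, the quantity $\norm{D^\beta_x \sigma(\cdot,0)}_{L^\infty(\Z_p)}$ is a finite constant; using $\langle\xi\rangle \ge 1$ and the triangle inequality we conclude $\norm{D^\beta_x \sigma(\cdot,\xi)}_{L^\infty(\Z_p)} \lesssim \langle\xi\rangle^{m}$ for every $\xi \neq 0$, the single remaining value $\xi = 0$ being absorbed into the constant. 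Specialising $\beta$ to $0$ and to $N$ gives precisely the two estimates required above.

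The reductions in the first paragraph — passing to an integer Sobolev order, comparing $D^N_x$ with the weight $\langle\cdot\rangle^N$, and the crude bound $\norm{\cdot}_{L^2(\Z_p)} \le \norm{\cdot}_{L^\infty(\Z_p)}$ valid on the compact group $\Z_p$ — are routine. The one point that deserves care is the step closing the second paragraph: the class $\tilde{S}^m_{0,0}$ is defined only through the finite differences $\triangleplus_\eta^\xi\sigma$, so one must recover control of $\sigma(\cdot,\xi)$ and of $D^N_x\sigma(\cdot,\xi)$ themselves, not merely of their $\xi$-increments. The admissible choice $\eta = -\xi$, which collapses $\triangleplus_{-\xi}^{\xi}\sigma(\cdot,\xi)$ onto the fixed frozen symbol $\sigma(\cdot,0)$, is exactly what makes this possible. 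This is also where the hypothesis $\delta = 0$ is used: with $\delta > 0$ the $L^\infty$ bound on $D^N_x\sigma(\cdot,\xi)$ would only be of size $\langle\xi\rangle^{m+\delta N}$, with $N$ forced to grow with $|s|$, so it would no longer match the order $m$ and this crude route would have to be replaced by the finer analysis already used in the proof of Proposition \ref{L2boundedness}.
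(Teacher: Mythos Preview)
Your overall strategy---verify the hypothesis of Proposition \ref{L2boundedness}---is exactly the route the paper intends (it states the result as a corollary without further argument). The reductions in your first paragraph are fine. The issue is in the second paragraph, and it is a genuine gap when $m<0$.

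Your triangle-inequality step reads
\[
|D^\beta_x \sigma(x,\xi)| \;\le\; |D^\beta_x\sigma(x,0)| \;+\; |D^\beta_x\triangleplus_{-\xi}^{\xi}\sigma(x,\xi)| \;\le\; C_0 + C_1\langle\xi\rangle^{m}.
\]
When $m\ge 0$ the constant $C_0$ can indeed be absorbed since $\langle\xi\rangle\ge 1$. But when $m<0$ the right-hand side does \emph{not} decay: $C_0+C_1\langle\xi\rangle^{m}\to C_0$ as $|\xi|_p\to\infty$, so you cannot conclude $\|D^\beta_x\sigma(\cdot,\xi)\|_{L^\infty}\lesssim\langle\xi\rangle^{m}$, and the hypothesis of Proposition \ref{L2boundedness} is not verified. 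In fact, taken literally, Definition \ref{hormanderclasses2} only constrains the increments $\triangleplus_\eta^\xi\sigma$; the constant symbol $\sigma\equiv 1$ has all increments zero and would therefore lie in every $\tilde S^{m}_{0,0}$, yet $T_\sigma=I$ is not bounded $H^{s+m}\to H^{s}$ for $m<0$. So with your reading of the definition the corollary is simply false for negative order, and no amount of collapsing to $\xi=0$ can repair it. A second, related point: your assertion that ``$\sigma(\cdot,0)$ is a fixed smooth function'' is not furnished by the literal definition either---it bounds $D^\beta_x$ applied to differences, not to $\sigma(\cdot,\xi)$ itself.

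The paper evidently intends (and uses elsewhere, e.g.\ in Section 6 where it writes $|D^r\sigma(x,\xi)|\le C_{\sigma,r}$ for $\sigma\in\tilde S^0_{0,0}$) the undifferenced bound $|D^\beta_x\sigma(x,\xi)|\le C_\beta\langle\xi\rangle^{m+\delta\beta}$ to be part of the definition, as is standard for H\"ormander classes. With that reading the hypothesis of Proposition \ref{L2boundedness} is immediate for $\delta=0$: $\|\sigma(\cdot,\xi)\|_{H^{\beta+|s|}}\lesssim \|\sigma(\cdot,\xi)\|_{L^2}+\|D^N_x\sigma(\cdot,\xi)\|_{L^2}\lesssim\langle\xi\rangle^{m}$ and you are done without any detour through $\eta=-\xi$. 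You should either adopt that reading explicitly, or restrict your argument to $m\ge 0$ and note that the negative-order case requires the additional zeroth-order symbol bound.
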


We also have a version of \cite[Theorem 4.3.1]{ruzhansky1}.
\begin{pro}[Smoothing]
The following conditions are equivalent: 
\begin{enumerate}
    \item[(i)] $T \in \mathcal{L}(H^s (\Z_p), H^t(\Z_p))$ for every $s , t \in \R$;
    \item[(ii)] $\sigma_T \in \Tilde{S}^{- \infty} (\Z_p \times \widehat{\Z}_p)$;
    \item[(iii)] There exists $K_T \in C^\infty (\Z_p \times \Z_p)$ such that for all $f \in C^\infty (\Z_p)$ we have $$T f (x) = \int_{\Z_p} K_T (x,y) f (y) dy .$$ 
\end{enumerate}
\end{pro}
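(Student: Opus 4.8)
The plan is to prove the cycle of implications $(iii) \Rightarrow (i) \Rightarrow (ii) \Rightarrow (iii)$, using Proposition~\ref{L2boundedness} for the Sobolev mapping properties and Plancherel's theorem to translate between the symbol and the kernel.

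First I would prove $(iii) \Rightarrow (i)$. Given $K_T \in C^\infty(\Z_p \times \Z_p)$, one wants to show the integral operator maps every $H^s(\Z_p)$ to every $H^t(\Z_p)$. Writing $f \in \hb^\infty$ in its Fourier series and expanding $K_T$ in the orthonormal basis $\{\chi_p(\xi x)\chi_p(\eta y)\}$, the Fourier coefficients of $K_T$ decay faster than any power of $\langle \xi \rangle^{-1}\langle \eta \rangle^{-1}$ by the very definition of $C^\infty(\Z_p \times \Z_p)$. A direct computation identifies $\widehat{\sigma_T}(\eta, \xi)$ as a coefficient of $K_T$ (up to a reindexing $\eta \mapsto \eta$, $\xi \mapsto -\xi$ or similar), so $\|\sigma_T(\cdot,\xi)\|_{H^{k}(\Z_p)} \lesssim_{k,N} \langle \xi \rangle^{-N}$ for all $k, N$. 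Proposition~\ref{L2boundedness} with $m = t - s$ (taking $N$ large enough to absorb $\beta + |s|$) then gives $T \in \mathcal{L}(H^s, H^t)$ for the prescribed $s,t$, and since $s,t$ were arbitrary, $(i)$ follows.

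Next, $(i) \Rightarrow (ii)$. Assuming $T$ maps every $H^s$ boundedly into every $H^t$, I would estimate $\|D_x^\beta \triangleplus_\eta^\xi \sigma_T(x,\xi)\|$. The key observation is that $\triangleplus_\eta^\xi \sigma_T(x,\xi) = \overline{\chi_p(\xi x)}\big[\overline{\chi_p(\eta x)} T\chi_p((\xi+\eta)x) - T\chi_p(\xi x)\big]$, i.e.\ it is built from $T$ applied to individual characters. Since $\chi_p(\zeta x)$ has $H^s$-norm exactly $\langle \zeta \rangle^s$, boundedness $T \in \mathcal{L}(H^s, H^t)$ gives $\|T\chi_p(\zeta x)\|_{H^t} \leq C_{s,t}\langle \zeta \rangle^s$ for every $s$; choosing $s$ very negative makes this decay arbitrarily fast in $\langle \zeta \rangle$. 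Combined with Lemma~\ref{p-adicl1Hsnorms} to pass from $H^t$-control to pointwise/$L^\infty$-control after applying $D_x^\beta$ (absorbing the extra $\langle \xi \rangle^\beta$ type factors into the choice of $t$), one gets $|D_x^\beta \triangleplus_\eta^\xi \sigma_T(x,\xi)| \leq C_{M,\beta} \langle \xi \rangle^{-M}$ for all $M$, which is exactly the statement $\sigma_T \in \Tilde{S}^{-\infty}(\Z_p \times \widehat{\Z}_p)$ (the $|\eta|_p^\alpha$ factor is harmless since $|\eta|_p \leq \langle \xi \rangle$). Here one must be a little careful that the difference $\triangleplus_\eta^\xi$ does not create cancellation that is hard to exploit — but since we only need an upper bound and each term is separately small, the triangle inequality suffices.

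Finally, $(ii) \Rightarrow (iii)$. Given $\sigma_T \in \Tilde{S}^{-\infty}$, define $K_T(x,y) := \sum_{\xi \in \widehat{\Z}_p} \sigma_T(x,\xi)\chi_p(\xi(x-y))$, the natural candidate kernel, so that formally $Tf(x) = \int_{\Z_p} K_T(x,y) f(y)\,dy$. The task is to verify $K_T \in C^\infty(\Z_p \times \Z_p)$, i.e.\ that its Fourier coefficients decay faster than any power. Computing $\int_{\Z_p \times \Z_p} K_T(x,y)\overline{\chi_p(\zeta x)}\,\overline{\chi_p(\eta y)}\,dy\,dx$, the $y$-integral forces $\eta$ to be $-\xi$ (or the appropriate representative), collapsing the sum, and leaves a Fourier coefficient of $\sigma_T(\cdot,-\eta)$ in the $x$-variable; the rapid decay estimates in $\Tilde{S}^{-\infty}$ (with $\beta$ large for the $x$-smoothness and large negative order $m$ for decay in $\langle \xi\rangle$) then give the required bound $\lesssim \langle \zeta \rangle^{s_1}\langle \eta \rangle^{s_2}$ for all $s_1, s_2$. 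The main obstacle throughout is bookkeeping: getting the identification of $\widehat{\sigma_T}(\eta,\xi)$ with kernel coefficients right (including the representative-dependent reindexing coming from $\widehat{\Z}_p \cong \Q_p/\Z_p$), and making sure the passage from the $\Tilde{S}^{-\infty}$ estimates to Sobolev or $C^\infty(\Z_p\times\Z_p)$ statements correctly tracks how many derivatives $\beta$ and how much decay $m$ are needed — none of the individual steps is deep, but the indices must be chosen consistently.
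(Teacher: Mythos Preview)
Your proposal is correct and follows essentially the same route as the paper: the paper proves $(i)\Leftrightarrow(ii)$ and $(ii)\Leftrightarrow(iii)$ separately (invoking Corollary~\ref{coroboundedhormanderclasses} for $(ii)\Rightarrow(i)$), while you run the cycle $(iii)\Rightarrow(i)\Rightarrow(ii)\Rightarrow(iii)$, but the actual arguments---identifying kernel Fourier coefficients with $\widehat{\sigma}_T(\eta,\xi)$, testing $T$ on characters to extract decay, and building $K_T$ from $\sigma_T$---are the same. The only cosmetic difference is that for $(i)\Rightarrow(ii)$ the paper bounds $\|\sigma_T(\cdot,\xi)\|_{H^s}$ directly whereas you push through to the pointwise $|D_x^\beta\triangleplus_\eta^\xi\sigma_T|$ estimate via Sobolev embedding, but these are equivalent descriptions of $\Tilde{S}^{-\infty}$.
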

\begin{proof}
\esp 
\begin{itemize}
    \item[](i) $\implies$ (ii) : If $T \in \mathcal{L}(H^s (\Z_p), H^t(\Z_p))$ for every $s , t \in \R$ then for every $m \in \R$ we have $T J_{-(m-|s|)} \in \mathcal{L}(L^2 (\Z_p), H^s (\Z_p)).$ It follows $$||TJ_{-(m-|s|)}\chi_p(\xi \cdot)||_{H^s(\Z_p)} = ||\sigma_T (\cdot , \xi) \langle \xi \rangle^{-(m-|s|)}\chi_p(\xi \cdot)]||_{H^s (\Z_p)} \lesssim ||\chi_p(\xi \cdot)||_{L^2 (\Z_p)} = 1,$$for every $\xi \in \widehat{\Z}_p$. Also, we can deduce the following inequalities: \begin{align*}
    ||\sigma_T (\cdot , \xi) \langle \xi \rangle^{-(m-|s|)} \chi_p(\xi \cdot) ]||_{H^s (\Z_p)}^2 &= ||J_s \sigma_T (\cdot , \xi) \langle \xi \rangle^{-(m-|s|)} \chi_p(\xi \cdot)]||_{L^2 (\Z_p)}^2 \\ &= \frac{1}{\langle \xi \rangle^{2(m-|s|)}} \sum_{\eta \in \widehat{\Z}_p} \langle \eta \rangle^{2s} |\widehat{\sigma}_T(\eta - \xi , \xi)|^2\\ &\gtrsim \frac{1}{ \langle \xi \rangle^{2m}} \sum_{\eta \in \widehat{\Z}_p} \langle \eta - \xi \rangle^{2s} |\widehat{\sigma}_T(\eta - \xi , \xi)|^2\\&=\frac{1}{\langle \xi \rangle^{2m}} ||\sigma_T (\cdot , \xi)||_{H^s(\Z_p)}^2.
\end{align*}Thus we conclude $$||\sigma_T(\cdot , \xi)||_{H^s(\Z_p)} \leq C \langle \xi \rangle^m, $$ for every $s,m \in \R$.
\item[](ii)$\implies$(i): Follows from Corollary \ref{coroboundedhormanderclasses}. 
\item[](ii)$\implies$(iii): Just observe that \begin{align*}
    Tf(x)&= \sum_{\xi \in \widehat{\Z}_p} \sigma_T (x,\xi) \widehat{f}(\xi) \chi_p(\xi x)\\ &\sum_{\xi \in \widehat{\Z}_p} \sigma_T (x,\xi) \Big( \int_{\Z_p} f(y) \overline{\chi_p(\xi y)} dy \Big) \chi_p(\xi x)\\& = \int_{\Z_p} \Big( \sum_{\xi \in \widehat{\Z}_p} \sigma_T (x,\xi) \chi_p(\xi (x -y))\Big) f(y) dy,
\end{align*}and one can easily see that $$K_T (x,y):=\sum_{\xi \in \widehat{\Z}_p} \sigma_T (x,\xi) \chi_p(\xi (x -y)) \in C^{\infty}(\Z_p \times \Z_p).$$
\item[] (iii)$\implies$(ii): Suppose that for some $K_T \in C^{\infty}(\Z_p \times \Z_p)$ we have $$Tf(x)=\int_{\Z_p} K_T (x,y)f(y)dy.$$Then the associated symbol $\sigma_T (x,\xi)$ of $T$ is $$\sigma_T (x, \xi) = \overline{\chi_p(\xi x)} \int_{\Z_p} K_T (x,y) \overline{\chi_p(-\xi y)} dy=\overline{\chi_p(\xi x)} \mathcal{F}_{\Z_p}^y K_T (x , - \xi).$$Next $$\widehat{\sigma}_T (\eta , \xi) = \mathcal{F}_{\Z_p}^x\mathcal{F}_{\Z_p}^y K_T (\eta + \xi , - \xi),$$which implies \begin{align*}
    \sum_{\eta \in \widehat{\Z}_p} \langle \eta \rangle^{2s} |\widehat{\sigma}_T (\eta , \xi)|^2 &= \sum_{\eta \in \widehat{\Z}_p} \langle \eta \rangle^{2s} |\mathcal{F}_{\Z_p}^x\mathcal{F}_{\Z_p}^y K_T (\eta + \xi , - \xi)|^2\\&=\sum_{\eta \in \widehat{\Z}_p} \langle \eta + \xi - \xi \rangle^{2s} |\mathcal{F}_{\Z_p}^x\mathcal{F}_{\Z_p}^y K_T (\eta + \xi , - \xi)|^2 \\&\lesssim \langle \xi \rangle^{2|s|}\sum_{\eta \in \widehat{\Z}_p} \langle \eta + \xi \rangle^{2s} |\mathcal{F}_{\Z_p}^x\mathcal{F}_{\Z_p}^y K_T (\eta + \xi , - \xi)|^2 \\ &= \langle \xi \rangle^{2|s|} \sum_{\eta \in \widehat{\Z}_p} \langle \eta \rangle^{2s} |\mathcal{F}_{\Z_p}^x\mathcal{F}_{\Z_p}^y K_T (\eta, - \xi)|^2 \\ & \leq C \langle \xi \rangle^{m},
\end{align*}for every $s,m \in \R$, and thus $\sigma_T \in \Tilde{S}^{-\infty} (\Z_p \times \widehat{\Z}_p).$
\end{itemize}
\end{proof}

Our next step is the development of a pseudo-differential calculus over $\Z_p$. For doing this in the next section, we introduce an important object that will allow us to prove an adjoint fomula similar to \cite[Theorem 4.7.7]{ruzhansky1}. 
\begin{defi}\normalfont
An \emph{amplitude} is a measurable function $a: \Z_p \times \Z_p \times \widehat{\Z}_p \to \C$ such that $a(\cdot , \cdot , \xi ) \in C^\infty (\Z_p \times \widehat{\Z}_p)$ for each $\xi  \in \widehat{\Z}_p$. For an amplitude $a$ its associated operator is defined as $$T_a f (x) := \sum_{\xi  \in \widehat{\Z}_p } \int_{\Z_p} \chi ((x-y)\xi ) a (x ,y, \xi ) f (y) dy.$$It will be convenient to work with amplitudes in special classes, similar to the Hörmander classes, so we define the amplitude classes $S^m_{\rho ,  \delta} (\Z_p^2 \times \widehat{\Z}_p)$ as the collection of amplitudes $a (x, y , |\xi|_p)$ satisfying $$|\Delta_\xi^\alpha D_x^\beta D_y^\gamma a (x,y,|\xi|_p)| \leq C_{a,m, \alpha, \beta, \gamma} \langle \xi \rangle^{m -\rho \alpha + \delta (\beta + \gamma )},$$for every $\alpha, \beta, \gamma \in \N_0$.
\end{defi}
\section{\textbf{Symbolic calculus} I}

To develop our first version of symbolic calculus we begin with asymptotic sums.

\begin{teo}[Asymptotic sum of symbols]\label{asymptotic sum}
Let $(m_j)_{j \in \N_0}$ be a strictly decreasing sequence of real numbers such that $m_j \to - \infty $ as $|\xi|_p \to \infty$. Let $(\sigma_j)_{j \in \N_0}$, $\sigma_j \in S^{m_j}_{\rho , \delta} (\Z_p \times \widehat{\Z}_p)$, be a sequence of $p$-adic symbols. Then there exists a $p$-adic symbol $\sigma \in S^{m_0}_{\rho ,  \delta} (\Z_p \times \widehat{\Z}_p)$ such that for all $N \in \N_0$ $$\sigma \esp \esp \mysim \esp \esp \sum_{j=0}^{N-1}  \sigma_j,$$ where the above means $$\sigma - \sum_{j=0}^{N-1}  \sigma_j \in S^{m_N}_{\rho , \delta} (\Z_p \times \widehat{\Z}_p).$$ 
\end{teo}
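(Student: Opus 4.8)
The plan is to mimic the classical Borel-type construction used for asymptotic sums of Hörmander symbols (as in \cite{ruzhansky1}), adapted to the $p$-adic discrete setting. First I would choose a smooth cutoff sequence on the frequency side: pick a function $\psi \in C^\infty(\widehat{\Z}_p)$ (more precisely, a function of $|\xi|_p$) with $\psi(\xi) = 0$ for $\langle \xi \rangle \le 1$ and $\psi(\xi) = 1$ for $\langle \xi \rangle$ large, and for a rapidly increasing sequence of radii $R_j \to \infty$ set $\psi_j(\xi) := \psi(\xi / R_j)$ in the sense that $\psi_j$ vanishes on a large ball depending on $j$ and equals $1$ outside a slightly larger ball. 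The candidate symbol is then
\begin{equation*}
\sigma(x,\xi) := \sum_{j=0}^\infty \psi_j(\xi)\, \sigma_j(x,\xi).
\end{equation*}
Because each $\psi_j$ kills a ball that grows with $j$, for any fixed $\xi$ only finitely many terms are nonzero, so the sum is well defined pointwise and defines a genuine function on $\Z_p \times \widehat{\Z}_p$; the same local finiteness makes it smooth in $x$ and gives meaning to applying $\Delta_\xi^\alpha D_x^\beta$ term by term.

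Next I would verify the two required properties. For the remainder property, fix $N$ and split
\begin{equation*}
\sigma - \sum_{j=0}^{N-1}\sigma_j = \sum_{j=0}^{N-1}(\psi_j - 1)\sigma_j + \sum_{j \ge N}\psi_j \sigma_j.
\end{equation*}
The first finite sum is compactly supported in $\xi$, hence lies in $S^{-\infty} \subset S^{m_N}_{\rho,\delta}$. For the tail, one estimates $|\Delta_\xi^\alpha D_x^\beta(\psi_j\sigma_j)|$ using a discrete Leibniz rule for $\Delta_\xi$ together with the symbol estimates on $\sigma_j$ and the fact that $\Delta_\xi^{\alpha'}\psi_j$ is supported where $R_j \lesssim \langle \xi \rangle$; this yields a bound $\lesssim \langle \xi \rangle^{m_j - \rho\alpha + \delta\beta} \le \langle \xi \rangle^{m_N - \rho\alpha + \delta\beta}$ uniformly for $j \ge N$, and summability of the tail is forced by choosing the $R_j$ large enough so that, say, $\langle \xi \rangle^{m_j} \le 2^{-j}\langle \xi \rangle^{m_N}$ on the support of $\psi_j$ (possible since $m_j \downarrow -\infty$). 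This simultaneously handles the claim $\sigma \in S^{m_0}_{\rho,\delta}$ by taking $N = 1$ (the $j=0$ term being already in $S^{m_0}_{\rho,\delta}$).

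The key technical point — and the main obstacle — is the diagonal argument that makes one choice of $(R_j)$ work for all multi-indices $\alpha,\beta$ simultaneously. For each pair $(\alpha,\beta)$ one needs $R_j$ large enough to control the $(\alpha,\beta)$-seminorm of $\psi_j\sigma_j$ by $2^{-j}\langle\xi\rangle^{m_N - \rho\alpha+\delta\beta}$ on $\mathrm{supp}\,\psi_j$; since there are countably many such pairs, one enumerates them and at stage $j$ imposes the finitely many constraints coming from the first $j$ pairs, thereby obtaining a single increasing sequence $R_j \to \infty$ that works. I would also need the elementary fact that $\Delta_\xi$ (acting via $|\xi|_p$) obeys a product rule with controlled lower-order terms and that $\Delta_\xi^{\alpha'}$ applied to $\psi(\xi/R_j)$ produces factors that are $O(R_j^{-\rho\alpha'})$ on its support while being supported in $\langle\xi\rangle \sim R_j$ — this is the $p$-adic analogue of scaling a cutoff and is where Definition \ref{p-adicdifference} is used. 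Once these bookkeeping lemmas are in place, the estimates above assemble directly into the statement.
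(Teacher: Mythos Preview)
Your approach is essentially the same Borel-type construction the paper uses: multiply each $\sigma_j$ by a frequency cutoff that kills a growing ball and sum. The paper's version is slightly more concrete---it takes the crude 0--1 cutoff $\varphi(n)=\mathbf{1}_{|n|>1}$ and sets $\varphi_j(\xi)=\varphi(p^{-j}|\xi|_p)$, so the radii are simply $p^j$ and no diagonal selection of $R_j$ is written out. The paper then observes that each $\Delta_\xi^{n}\varphi_j$ has bounded support, invokes the discrete Leibniz rule to get constants $C_{j\alpha\beta}$, notes local finiteness in $\xi$, and asserts the tail bound $C_{N\alpha\beta}\langle\xi\rangle^{m_N-\rho\alpha+\delta\beta}$ directly. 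Your more explicit treatment of the diagonal argument and the summability of the tail is, if anything, more careful than what the paper writes.

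One inaccuracy worth flagging: your claim that $\Delta_\xi^{\alpha'}\psi_j$ is $O(R_j^{-\rho\alpha'})$ is not right in this setting. The operator $\Delta_\xi$ from Definition~\ref{p-adicdifference} is a unit shift in the integer $|\xi|_p$, so there is no dilation-type gain when you scale the cutoff; for a 0--1 cutoff, $\Delta_\xi^{\alpha'}\psi_j$ is simply bounded and supported on a finite set near $|\xi|_p\sim R_j$. (Even if you interpret $\psi$ as a smooth real function restricted to integers, the gain would be $R_j^{-\alpha'}$, with no $\rho$.) This does not break your argument: what the Leibniz expansion actually needs is exactly that $\Delta_\xi^{\alpha'}\psi_j$ is bounded and compactly supported in $\langle\xi\rangle\sim R_j$, so that those terms are individually smoothing and can be absorbed. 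Just replace the scaling claim by this support statement and the rest of your outline goes through as written.
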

\begin{proof}
Let us define the function $\varphi : \Z \to \C$  by \[\varphi (n) := \begin{cases}
0 & \esp \text{if} \esp |n|\leq 1, \\ 1 & \esp \text{if} \esp |n|>1,
\end{cases}\] and the sequence of functions $(\varphi_j)_{j \in \N_0}$ defined on $\Q_p$ by $\varphi_j( \xi) = \varphi(p^{-j} |\xi|_p)$. It is easy to see that each $\Delta^n_\xi \varphi_j$ has bounded support, so that by the discrete Leibniz formula \cite[Lemma 3.3.6]{ruzhansky1} we get $$|\Delta^\alpha_\xi \partial^\beta_x [\varphi_j (\xi) \sigma_j (x,\xi)]|\leq C_{j  \alpha \beta} \langle \xi \rangle^{m_j - \rho\alpha - \delta \beta},$$ for some positive constants $C_{j \alpha \beta}$, since $\sigma_j \in S^{m_j}_{\rho , \delta} (\Z_p \times \widehat{\Z}_p)$. We also note that for any $\xi \in \widehat{\Z}_p$ fixed $\Delta_\xi^\alpha[\varphi_j (\xi) \sigma_j (x,\xi)]$ vanishes when $j$ is large enough. This justifies the definition $$\sigma (x,\xi) := \sum_{j \in \N_0} \varphi_j (\xi) \sigma_j (x,\xi),$$ where clearly $\sigma \in S^{m_0}_{\rho , \delta} (\Z_p \times \widehat{\Z}_p )$. Furthermore \begin{align*}
    |\Delta^\alpha_x \partial^{\beta}_\xi [\sigma (x , |\xi|_p) - &\sum_{j=0}^{N-1}\sigma_j (x , |\xi|_p)]| \\&\leq  \sum_{j=0}^{N-1} |\Delta^\alpha \partial^\beta [(\varphi_j (\xi) - 1) \sigma_j (x , |\xi|_p)]| + \sum_{j=0 }^{\infty} |\Delta^\alpha \partial^\beta [\varphi_j (\xi)  \sigma_j (x , |\xi|_p)]|.
\end{align*}The first part of the above sum vanishes for large $|\xi|_p$, so it can be bounded by $C_{r N \alpha \beta} \langle \xi \rangle^{-r}$ for any $r \in \R$. The second part is majorised by $C_{N\alpha \beta} \langle \xi \rangle^{m_N - \rho \alpha + \delta \beta}$. This concludes the proof.
\end{proof}

\begin{defi}\normalfont
The formal series $\sum_{j=0}^{\infty} \sigma_j$ in Theorem \ref{asymptotic sum} is called an asymptotic expansion of the symbol $\sigma \in S^{m_0}_{\rho , \delta} (\Z_p \times \widehat{\Z}_p)$. In this case we write $$\sigma \sim \sum_{j=0}^{\infty} \sigma_j.$$
\end{defi}

Combining asymptotic expansions and the definition of amplitude operators we can give explicit formula for the symbol of the transpose and the adjoint of pseudo-differential operators.

\begin{pro}[Symbols of amplitude operators]\label{symbols p-adic amplitudes}
Let $0 \leq \rho < \delta \leq 1$. For every $p$-adic amplitude $a \in S^m_{\rho , \delta} (\Z_p^2 \times \widehat{\Z}_p)$ there exists a unique $p$-adic symbol $\sigma \in S^m_{\rho , \delta} (\Z_p \times \widehat{\Z}_p)$ satisfying $T_a = T_\sigma$, and $\sigma$ has the following asymptotic expansion:$$\sigma(x , |\xi|_p) \sim \sum_{\gamma \geq 0} \frac{1}{\gamma!} \Delta^\gamma_\xi \partial^\gamma_y a (x , y, |\xi|_p)|_{y=x}$$
\end{pro}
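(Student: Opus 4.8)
The plan is to mimic the classical (and toroidal) argument for reducing an amplitude operator to a pseudo-differential operator, replacing the Euclidean Taylor expansion by the discrete Taylor expansion on $\mathbb{Z}$ stated above. First I would compute the symbol of $T_a$ directly from the defining formula: applying $T_a$ to the character $\chi_p(\xi x)$ and multiplying by $\overline{\chi_p(\xi x)}$ gives
\[
\sigma(x,\xi) = \overline{\chi_p(\xi x)} T_a \chi_p(\xi \cdot)(x) = \sum_{\eta \in \widehat{\Z}_p} \int_{\Z_p} \chi_p((x-y)\eta)\, a(x,y,\xi)\, \chi_p(\xi y)\, \overline{\chi_p(\xi x)} \, dy,
\]
which after collecting the characters becomes a sum over $\eta$ of $\widehat{a}^{(y)}(x,\eta,\xi)\,\chi_p(\eta x)$-type terms, i.e. the $y$-Fourier coefficients of $a$ evaluated with a shift in the frequency variable; concretely one gets an expression of the form $\sigma(x,\xi)=\sum_{\eta} \big(\mathcal{F}^{y}_{\Z_p} a\big)(x,\eta,\xi)$ where the frequency argument of $a$ has been translated by $\eta$. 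This is the $\mathbb{Z}_p$-analogue of the integral $\sigma(x,\xi)=\int\int e^{-i(y-x)\eta} a(x,y,\xi+\eta)\,dy\,d\eta$. Uniqueness of $\sigma$ is immediate since two pseudo-differential operators with the same action on all characters have the same symbol.

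Next I would expand $a(x,y,\xi+\eta)$ — more precisely, since the $\xi$-variable here is really $|\xi|_p$ and differences are taken with the operator $\Delta_\xi$, I would apply the Discrete Taylor expansion on $\mathbb{Z}$ in the third slot, writing
\[
a(x,y,|\xi|_p + \text{(shift)}) = \sum_{\gamma < N} \frac{1}{\gamma!}(\text{shift})^\gamma \Delta^\gamma_\xi a(x,y,|\xi|_p) + r_N,
\]
then substitute back into the series for $\sigma$. The terms with $\gamma<N$ produce, after resumming the $\eta$-sum (which by the inverse Fourier transform sets $y=x$ and converts the shift to the derivative operator $\partial_y^\gamma$ acting in the $y$-variable, up to the relation between $\partial$ and $\Delta$ recorded in the remark), exactly the claimed terms $\frac{1}{\gamma!}\Delta^\gamma_\xi \partial^\gamma_y a(x,y,|\xi|_p)|_{y=x}$. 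Each of these lies in $S^{m-(\rho-\delta)\gamma}_{\rho,\delta}(\Z_p\times\widehat{\Z}_p)$ because each $\Delta_\xi$ gains $\langle\xi\rangle^{-\rho}$ while each $\partial_y$ — being built from the factor $|\eta-\xi|_p-|\xi|_p$ which is bounded by $\langle\xi\rangle$ times $\langle\eta\rangle^{-1}$-type gains — costs at most $\langle\xi\rangle^{\delta}$ after pairing with the decay of $\widehat a$ in $\eta$; since $\rho>\delta$ these orders strictly decrease, so the hypothesis of Theorem \ref{asymptotic sum} (asymptotic sum of symbols) applies and the formal series defines a genuine symbol in $S^m_{\rho,\delta}$.

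The main obstacle, as usual, is controlling the remainder term $r_N$ and showing it lies in $S^{m_N}_{\rho,\delta}(\Z_p\times\widehat{\Z}_p)$ with $m_N\to-\infty$. Here I would invoke the remainder estimate of the Discrete Taylor expansion, which bounds $\Delta^s r_N$ by $C_N \max_{\nu\in Q(v)}|v^N \Delta^{N+s}a(\cdots + \nu)|$; the key point is that the "step" $v$ in our application is comparable to $\langle\xi\rangle$ on the relevant range of $\eta$ (this is where the constraint $|\eta|_p\le\langle\xi\rangle$, built into the very definition of the symbol classes and inherited by amplitudes, is essential), so that $|v^N|\lesssim\langle\xi\rangle^N$ while the $N$-fold $\Delta_\xi$-difference of $a$ contributes $\langle\xi\rangle^{m-\rho N}$; combined with the $\delta$-loss in $x$ and the summability in $\eta$ coming from $a(\cdot,\cdot,\xi)\in C^\infty(\Z_p\times\widehat{\Z}_p)$ (rapid decay of $\widehat a$ in the first two slots), one gets $r_N\in S^{m-(\rho-\delta)N}_{\rho,\delta}$, and choosing $N$ large makes this order as negative as desired. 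Finally, putting the pieces together — the finitely many explicit terms are a symbol by Theorem \ref{asymptotic sum}, the remainder is smoothing, and both reconstruct $T_a=T_\sigma$ — completes the proof; I would also double-check the bookkeeping between $\partial_y$ and $\Delta$ via the remark preceding Definition \ref{p-adicHOrmanderclasses} and Definition \ref{p-adicdifference} so that the asymptotic formula is stated with the operators exactly as in the claim.
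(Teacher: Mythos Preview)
Your plan matches the paper's proof: compute $\sigma(x,\xi)=\sum_\eta \chi_p(\eta x)\,\mathcal F_y a(x,\eta,|\xi+\eta|_p)$ directly from the definition, apply the discrete Taylor expansion in the third slot, and read off the terms $\tfrac{1}{\gamma!}\Delta_\xi^\gamma\partial_y^\gamma a|_{y=x}$.

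The remainder step, however, has a real gap. The constraint $|\eta|_p\le\langle\xi\rangle$ you invoke is \emph{not} part of the classes $S^m_{\rho,\delta}(\Z_p\times\widehat{\Z}_p)$ or of the amplitude classes --- it appears only in the $\Tilde S$ classes of Definition~\ref{hormanderclasses2}, which are not the ones in this proposition --- so you cannot appeal to it. Worse, even granting that bound your arithmetic does not close: $|v|^N\lesssim\langle\xi\rangle^N$ multiplied by the $\langle\xi\rangle^{m-\rho N}$ coming from the $N$ extra $\Delta_\xi$-differences gives $\langle\xi\rangle^{m+(1-\rho)N}$, which \emph{grows} with $N$ whenever $\rho<1$, so this cannot place the remainder in any $S^{m_N}$ with $m_N\to-\infty$.

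The correct estimate, and the one the paper uses, is that by the ultrametric inequality one has $|v|=\big||\xi+\eta|_p-|\xi|_p\big|\le\langle\eta\rangle$ for \emph{all} $\eta$ (indeed $v=0$ whenever $|\eta|_p<|\xi|_p$). Then $|v|^N\le\langle\eta\rangle^N$ is absorbed by the arbitrary-order decay $|\mathcal F_y a(x,\eta,\cdot)|\lesssim\langle\eta\rangle^{-r}$ coming from smoothness in $y$, producing the summable factor $\langle\eta\rangle^{N-r}$, while the $N$ extra $\Delta_\xi$-differences separately supply the gain in the $\langle\xi\rangle$-variable. In other words, the rapid decay in $\eta$ is doing two jobs at once --- killing the $|v|^N$ growth \emph{and} ensuring summability --- not only the latter as your write-up suggests.
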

\begin{proof}
Clearly, amplitude operators are densely defined linear operators whose symbol can be calculated as \begin{align*}
    \sigma (x , |\xi|_p)  = \overline{\chi_p(\xi x)} T_a (\chi_p(\xi x)) &= \sum_{\eta \in \widehat{\Z}_p} \int_{\Z_p} \chi_p((\eta-\xi)(x-y)) a(x,y,|\eta|_p) dy\\&= \sum_{\eta \in \widehat{\Z}_p} \chi_p((\eta-\xi)x) \mathcal{F}_y a (x , \eta - \xi , |\eta|_p )\\&= \sum_{\eta \in \widehat{\Z}_p} \chi_p(\eta x) \mathcal{F}_y a (x , \eta  , |\xi + \eta|_p ).
\end{align*}Now we apply the discrete Taylor formula to obtain \begin{align*}
    \sigma (x,|\xi|_p) &= \sum_{\eta \in \widehat{\Z}_p} \chi_p (\eta x) \sum_{\gamma < N} \frac{1}{\gamma!} \Delta^\gamma_\xi (|\xi+\eta|_p -|\xi|_p)^\gamma \mathcal{F}_y a (x , \eta, |\xi|_p) + \sum_{\eta \in \widehat{\Z}_p} \chi_p (\eta x) R_N (x,\eta,\xi)\\&= \sum_{\gamma < N} \frac{1}{\gamma!} \Delta^\alpha_\xi \partial^\beta_y a(x,y,|\xi|_p) + \sum_{\eta \in \widehat{\Z}_p} \chi_p(\eta x) R_N (x,\eta ,\xi).
\end{align*}Finally we can estimate \begin{align*}
    |\Delta^{\alpha'}_\xi D^{\beta'}_x R_N (x,\eta ,\xi)| &\leq \frac{1}{N!} ||\xi-\eta|_p - |\xi|_p|^N \max_{|w|=N, \esp \nu \in Q(|\xi-\eta|_p - |\xi|_p)} |\Delta^{\alpha' + w} \partial^{\beta'} \mathcal{F}_y a (x,\eta,|\xi|_p + \nu)| \\ & \leq C_{\alpha' \beta' m N} \langle \eta \rangle^{N-r} \langle \xi \rangle^{m - N -\rho \alpha' + \delta \beta' },
\end{align*} for $r$ large enough, finishing the proof.
\end{proof}
\begin{pro}[Transpose operators.]
Let $0 \leq \delta < \rho \leq 1.$ Let $T_\sigma$ be a pseudo-differential operator with symbol $\sigma \in S^m_{\rho , \delta} (\Z_p \times \widehat{\Z}_p)$. Then the transpose operator $T^t_\sigma$ is a pseudo-differential operator with symbol $\sigma^t \in S^m_{\rho , \delta} (\Z_p \times \widehat{ \Z_p})$, and it has the following asymptotic expansion $$\sigma^t (x,|\xi|_p) \sim \sum_{\gamma \geq 0} \frac{1}{\gamma!} \Delta^\gamma_\xi \partial^\gamma_y \sigma (y,|\xi|_p)|_{y=x}.$$
\end{pro}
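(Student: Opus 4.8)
The plan is to realize the transpose as an amplitude operator and then quote Proposition \ref{symbols p-adic amplitudes}. First I would recall that $T^t_\sigma$ is defined through the bilinear pairing by $\int_{\Z_p}(T^t_\sigma f)(x)\,g(x)\,dx=\int_{\Z_p}f(x)\,(T_\sigma g)(x)\,dx$ for $f,g\in\hb^\infty$. Writing $T_\sigma g$ in integral form, $T_\sigma g(x)=\sum_{\xi\in\widehat{\Z}_p}\int_{\Z_p}\chi_p(\xi(x-y))\,\sigma(x,\xi)\,g(y)\,dy$ — legitimate on $\hb^\infty$, where $\widehat g$ has finite support — Fubini together with the interchange of the finite sum and the integrals gives $T^t_\sigma f(x)=\sum_{\xi\in\widehat{\Z}_p}\int_{\Z_p}\chi_p(\xi(y-x))\,\sigma(y,\xi)\,f(y)\,dy$.

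The only bookkeeping point is the sign inside the character. Since $\chi_p(\xi(y-x))=\chi_p((-\xi)(x-y))$, and $\xi\mapsto-\xi$ is a bijection of $\widehat{\Z}_p$ with $|-\xi|_p=|\xi|_p$, and since symbols in $S^m_{\rho,\delta}(\Z_p\times\widehat{\Z}_p)$ depend on $\xi$ only through $|\xi|_p$, relabeling $\xi\mapsto-\xi$ yields $T^t_\sigma f(x)=\sum_{\xi\in\widehat{\Z}_p}\int_{\Z_p}\chi_p(\xi(x-y))\,a(x,y,|\xi|_p)\,f(y)\,dy=T_a f(x)$ with the amplitude $a(x,y,|\xi|_p):=\sigma(y,|\xi|_p)$. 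This $a$ lies in $S^m_{\rho,\delta}(\Z_p^2\times\widehat{\Z}_p)$ essentially for free: it is independent of $x$, so $D_x^\beta a=0$ for $\beta\geq 1$ (the Vladimirov operator kills functions constant in $x$), while for $\beta=0$ the estimate $|\Delta_\xi^\alpha D_y^\gamma a(x,y,|\xi|_p)|\leq C_{a,m,\alpha,\gamma}\langle\xi\rangle^{m-\rho\alpha+\delta\gamma}$ is exactly the defining bound for $\sigma\in S^m_{\rho,\delta}(\Z_p\times\widehat{\Z}_p)$.

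Finally I would apply Proposition \ref{symbols p-adic amplitudes} to $a$: it provides a unique $\sigma^t\in S^m_{\rho,\delta}(\Z_p\times\widehat{\Z}_p)$ with $T_{\sigma^t}=T_a=T^t_\sigma$, together with the asymptotic expansion $\sigma^t(x,|\xi|_p)\sim\sum_{\gamma\geq 0}\frac{1}{\gamma!}\Delta^\gamma_\xi\partial^\gamma_y a(x,y,|\xi|_p)|_{y=x}$, which is precisely $\sum_{\gamma\geq 0}\frac{1}{\gamma!}\Delta^\gamma_\xi\partial^\gamma_y\sigma(y,|\xi|_p)|_{y=x}$ because $a(x,y,|\xi|_p)=\sigma(y,|\xi|_p)$.

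I do not expect a genuine obstacle here; the whole content is the reduction to an amplitude operator, after which the previously proved amplitude-symbol correspondence does the work. The two points to be careful about are the conjugate/sign in $\chi_p$ under the bilinear (not sesquilinear) pairing, and the licitness of the sum–integral interchanges — both disposed of by working first on $\hb^\infty$, where all sums are finite, and then extending by density using the $L^2$/Sobolev bounds already available, e.g.\ Corollary \ref{coroboundedhormanderclasses}. I would also note the harmless mismatch between the hypothesis ``$0\leq\rho<\delta\leq 1$'' as printed in Proposition \ref{symbols p-adic amplitudes} and ``$0\leq\delta<\rho\leq 1$'' here: both are meant to be the single condition $\delta<\rho$ of Definition \ref{p-adicHOrmanderclasses}.
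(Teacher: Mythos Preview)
Your proposal is correct and follows essentially the same route as the paper: realize $T_\sigma^t$ as the amplitude operator with amplitude $a(x,y,|\xi|_p)=\sigma(y,|\xi|_p)$ (handling the sign via $|-\xi|_p=|\xi|_p$), verify $a\in S^m_{\rho,\delta}(\Z_p^2\times\widehat{\Z}_p)$, and apply Proposition~\ref{symbols p-adic amplitudes}. Your write-up is in fact more careful than the paper's on the sign bookkeeping, the amplitude-class check, and the typo in the hypothesis of Proposition~\ref{symbols p-adic amplitudes}.
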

\begin{proof}
Consider the linear operator $T_a$ associated to the amplitude $a (x,y,|\xi|_p) := \sigma (x,|\xi|_p)$. We get \begin{align*}
    \int_{\Z_p} v(x) T_\sigma^t u (x) dx &= \int_{\Z_p} u(y) T_\sigma v(y) dy \\&= \int_{\Z_p} u(y) \Big(\sum_{\xi \in \widehat{\Z}_p} \int_{\Z_p} \chi_p(\xi(y-x)) a (y,x,|\xi|_p) v(x) dx \Big) dy \\ &=\int_{\Z_p} v(x) \Big(\sum_{\xi \in \widehat{\Z}_p} \int_{\Z_p} \chi_p(\xi(y-x)) a (y,x,|\xi|_p) u(y) dy \Big) dx.
\end{align*}Thus $T_\sigma^t$ is a linear operator $T_{a^t}$ associated to the amplitude $$a^t (x,y,|\xi|_p) = a (y,x,|\xi|_p) = \sigma (y,|-\xi|_p)= \sigma (y,|\xi|_p).$$Finally, by Proposition \ref{symbols p-adic amplitudes} $T_{a^t}$ is a pseudo-differential operator with symbol $\sigma^t \in S^{m}_{\rho, \delta} (\Z_p \times \widehat{\Z}_p)$ and it has the following asymptotic expansion $$\sigma^t (x,|\xi|_p) \sim \sum_{\gamma \geq 0} \frac{1}{\gamma!} \Delta^\gamma_\xi \partial^\gamma_y \sigma (y,|\xi|_p)|_{y=x}.$$
\end{proof}

\begin{rem}
Notice the peculiarity in this case: the symbol $\sigma (x, |\xi|_p)$ and the symbol of its transpose operator $\sigma^t (x , |\xi|_p)$ have the same asymptotic expansion.
\end{rem}

\begin{coro}
Let $T_\sigma$ be a pseudo-differential operator with symbol $\sigma \in S^m_{\rho , \delta} (\Z_p \times \widehat{\Z}_p)$. Then its adjoint operator $T_\sigma^*$ is a pseudo-differential operator $T_{ \sigma^*}$ with symbol $\sigma^* \in S^m_{\rho , \delta} (\Z_p \times \widehat{\Z}_p)$, and it has the following asymptotic expansion $$\sigma^* (x,|\xi|_p) \sim \sum_{\gamma \geq 0} \frac{1}{\gamma!}\Delta^\gamma_\xi \partial^{\gamma}_y \overline{\sigma(y,|\xi|_p)}|_{y=x}.$$
\end{coro}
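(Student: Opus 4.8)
The plan is to deduce this corollary directly from the transpose operator proposition, using the standard relationship between the adjoint and the transpose. First I would recall that for a pseudo-differential operator $T_\sigma$ on $L^2(\Z_p)$, the Hilbert-space adjoint $T_\sigma^*$ and the (bilinear) transpose $T_\sigma^t$ are related by $T_\sigma^* u = \overline{T_{\sigma}^t \overline{u}}$, that is, $T_\sigma^* = C \circ T_\sigma^t \circ C$ where $C$ denotes complex conjugation $Cu = \overline{u}$. This is the same bookkeeping as in \cite[Section 4.7]{ruzhansky1}, and it is purely formal: it follows from $\langle T_\sigma u, v\rangle = \int (T_\sigma u)\,\overline{v} = \int u\, \overline{T_\sigma^* v}$ versus $\int (T_\sigma^t w)\, z = \int w\, (T_\sigma z)$, setting $w = \overline{v}$, $z = u$.

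Next I would compute the symbol of $C \circ T_\sigma^t \circ C$ in terms of $\sigma^t$. Conjugating a Fourier multiplier-type expansion turns $\chi_p(\xi x)$ into $\overline{\chi_p(\xi x)} = \chi_p(-\xi x)$ and conjugates the Fourier coefficients, so that the symbol of $C T_\sigma^t C$ at $(x,\xi)$ equals $\overline{\sigma^t(x, |-\xi|_p)} = \overline{\sigma^t(x,|\xi|_p)}$, where we use that $|\xi|_p$ is the only way $\xi$ enters the symbol and $|-\xi|_p = |\xi|_p$. Then I would invoke the Transpose operators proposition to substitute the asymptotic expansion $\sigma^t(x,|\xi|_p) \sim \sum_{\gamma \geq 0} \frac{1}{\gamma!} \Delta^\gamma_\xi \partial^\gamma_y \sigma(y,|\xi|_p)|_{y=x}$, and observe that conjugation commutes with the difference operator $\Delta_\xi$ and the derivative operator $\partial_y$ (both have real coefficients — the binomial-type coefficients $(|\eta-\xi|_p - |\xi|_p)^\gamma$ are real), and with taking asymptotic expansions term by term. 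This yields $\sigma^*(x,|\xi|_p) \sim \sum_{\gamma \geq 0} \frac{1}{\gamma!} \Delta^\gamma_\xi \partial^\gamma_y \overline{\sigma(y,|\xi|_p)}|_{y=x}$, which is exactly the claimed formula.

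Finally, for the class membership $\sigma^* \in S^m_{\rho,\delta}(\Z_p \times \widehat{\Z}_p)$, I would note that the Hörmander estimates $|\Delta^\alpha_\xi D^\beta_x \sigma^*(x,|\xi|_p)| \leq C \langle\xi\rangle^{m - \rho\alpha + \delta\beta}$ are invariant under complex conjugation of the symbol, since $|\Delta^\alpha_\xi D^\beta_x \overline{\sigma^t(x,|\xi|_p)}| = |\overline{\Delta^\alpha_\xi D^\beta_x \sigma^t(x,|\xi|_p)}| = |\Delta^\alpha_\xi D^\beta_x \sigma^t(x,|\xi|_p)|$, and $\sigma^t$ was already shown to lie in $S^m_{\rho,\delta}$ by the previous proposition. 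Uniqueness of the symbol representing $T_\sigma^*$ follows as usual, since the symbol of any densely defined operator is determined by its action on the characters $\chi_p(\xi x)$.

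I do not expect a genuine obstacle here: the content is entirely in the already-proven transpose proposition, and the only care needed is to verify that complex conjugation interacts trivially with $\Delta_\xi$, $\partial_y$, the Hörmander seminorms, and the passage to asymptotic expansions — all of which hold because the relevant operators and weights have real coefficients. The mild subtlety worth stating explicitly is the identity $\overline{\sigma^t(x,|-\xi|_p)} = \overline{\sigma^t(x,|\xi|_p)}$, i.e.\ that the sign flip in $\xi$ coming from conjugating characters is absorbed by the $p$-adic absolute value, so no extra terms appear.
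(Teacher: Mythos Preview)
Your proposal is correct and is exactly the argument the paper intends: the corollary is stated immediately after the transpose proposition with no separate proof, so the implicit reasoning is precisely $T_\sigma^* = C\, T_\sigma^t\, C$ together with the observation that conjugation commutes with $\Delta_\xi$, $\partial_y$, and the H{\"o}rmander seminorms, and that $|-\xi|_p = |\xi|_p$ absorbs the sign flip. There is nothing to add.
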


The next step is to give a explicit formula for composition of pseudo-differential operators. For that end we will need a couple of simple auxiliary propositions. 
\begin{pro}\label{normofproduct}
Let $s>0$ be a given real number. let $f, g \in L^2 (\Z_p)$ be functions such that $f \in H^s(\Z_p)$ and $ \widehat{J_s g} \in \ell^1( \widehat{\Z_p})$. Then the pointwise product $f g \in H^s(\Z_p)$ and $$||fg||_{H^s (\Z_p)} \leq 2^s ||f||_{H^s (\Z_p)} ||\widehat{J_s g}||_{\ell^1 (\widehat{\Z}_p)}.$$ 
\end{pro}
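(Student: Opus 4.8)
The plan is to reduce the $H^s$-norm of the product to a convolution estimate on the Fourier side, exactly as one does for the classical Sobolev algebra property. First I would write, using Plancherel,
\[
\|fg\|_{H^s(\Z_p)}^2 = \sum_{\xi \in \widehat{\Z}_p} \langle \xi \rangle^{2s} \Big| \sum_{\eta \in \widehat{\Z}_p} \widehat{f}(\xi - \eta)\, \widehat{g}(\eta) \Big|^2,
\]
since the Fourier transform of a pointwise product is the convolution of the Fourier transforms (the characters $\chi_p(\xi x)$ multiply additively in $\xi$, so this is the usual convolution on the discrete group $\widehat{\Z}_p$). The key pointwise input is the elementary inequality $\langle \xi \rangle^s \le 2^s \langle \xi - \eta \rangle^s \langle \eta \rangle^s$ for $s > 0$, which follows from $\langle \xi \rangle \le \langle \xi-\eta\rangle + \langle \eta \rangle \le 2\langle \xi - \eta \rangle \langle \eta \rangle$ (here I use $\langle\cdot\rangle \ge 1$; note this is even simpler than Peetre since $s>0$, so no absolute value is needed on the exponent).

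Next I would insert this into the sum, moving the weight inside the convolution:
\[
\langle \xi \rangle^{s} \Big| \sum_{\eta} \widehat{f}(\xi - \eta)\widehat{g}(\eta)\Big| \le 2^s \sum_{\eta} \langle \xi - \eta \rangle^{s} |\widehat{f}(\xi - \eta)|\, \langle \eta \rangle^{s} |\widehat{g}(\eta)| = 2^s\big( (\langle\cdot\rangle^s|\widehat f|) * (\langle\cdot\rangle^s|\widehat g|)\big)(\xi).
\]
Then I take the $\ell^2(\widehat{\Z}_p)$-norm in $\xi$ and apply Young's convolution inequality in the form $\|a * b\|_{\ell^2} \le \|a\|_{\ell^2}\|b\|_{\ell^1}$, with $a = \langle\cdot\rangle^s|\widehat f|$ and $b = \langle\cdot\rangle^s|\widehat g| = |\widehat{J_s g}|$. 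This yields
\[
\|fg\|_{H^s(\Z_p)} \le 2^s \|\langle\cdot\rangle^s \widehat f\|_{\ell^2(\widehat{\Z}_p)}\, \|\langle\cdot\rangle^s \widehat g\|_{\ell^1(\widehat{\Z}_p)} = 2^s \|f\|_{H^s(\Z_p)}\, \|\widehat{J_s g}\|_{\ell^1(\widehat{\Z}_p)},
\]
which is exactly the claimed bound; in particular the right-hand side is finite under the hypotheses, so $fg \in H^s(\Z_p)$.

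The only genuine point requiring care — and the one I would treat as the main (minor) obstacle — is justifying that the Fourier-convolution identity $\widehat{fg} = \widehat f * \widehat g$ holds and that all the interchanges of summation are legitimate. Here $f \in H^s \subset L^2$ with $s>0 > $ nothing forcing $\widehat f \in \ell^1$, so $f$ need not be continuous; however $g$ satisfies $\widehat{J_s g}\in\ell^1$, hence $\widehat g \in \ell^1$ and $g \in L^\infty(\Z_p)$ by (the argument of) Lemma \ref{p-adicl1Hsnorms}, so $fg \in L^2(\Z_p)$ and the product is well-defined. I would first establish the estimate for $f, g \in \hb^\infty$ (finite Fourier expansions), where every sum is finite and the convolution identity is immediate, and then pass to the general case by density: approximate $f$ in $H^s$ and approximate $g$ so that $\widehat{J_s g}$ converges in $\ell^1$, using the already-proved inequality to control the products. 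This density step closes the proof.
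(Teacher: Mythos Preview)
Your proof is correct and follows essentially the same route as the paper: write $\widehat{fg}=\widehat f*\widehat g$, use $\langle\xi\rangle^s\le 2^s\langle\xi-\eta\rangle^s\langle\eta\rangle^s$, and then bound the weighted convolution. The only cosmetic difference is that the paper invokes Minkowski's integral inequality where you invoke Young's inequality $\ell^2*\ell^1\hookrightarrow\ell^2$ (these are the same step here), and your density discussion is more careful than what the paper writes.
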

\begin{proof}
The Fourier series of $(fg)(x)$ is $$(fg)(x) = \sum_{\xi \in \widehat{\Z_p}} \Big( \sum_{\eta \in \widehat{\Z}_p} \widehat{f}(\xi - \eta) \widehat{g}(\eta) \Big) \chi(\xi x).$$Thus using Minkowski integral inequality \begin{align*}
    ||fg||_{H^s (\Z_p)} = ||J_s(fg)||_{L^2 (\Z_p)} &= \Big( \sum_{\xi \in \widehat{\Z_p}} \langle \xi \rangle^{2s}\Big| \sum_{\eta \in \widehat{\Z}_p} \widehat{f}(\xi - \eta) \widehat{g}(\eta) \Big|^2  \Big)^{1/2} \\ &\leq \sum_{\eta \in \widehat{\Z}_p} \Big( \sum_{\xi \in \widehat{\Z_p}} \langle \xi \rangle^{2s} |\widehat{f}(\xi - \eta) \widehat{g}(\eta)|^2 \Big)^{1/2} \\ & \leq 2^s \sum_{\eta \in \widehat{\Z}_p} \langle \eta \rangle^s \widehat{g}(\eta) \Big( \sum_{\xi \in \widehat{\Z_p}} \langle \xi - \eta \rangle^{2s} |\widehat{f}(\xi - \eta) |^2 \Big)^{1/2}\\&= 2^s ||f||_{H^s (\Z_p)} ||\widehat{J_s g}||_{\ell^1 (\widehat{\Z}_p)}.  
\end{align*}This concludes the proof.
\end{proof}

\begin{pro}
Let $\sigma_1, \sigma_2$ be symbols in the Hörmander classes $S^{m_1 }_{\rho ,0 } (\Z_p \times \widehat{\Z}_p)$ and $S^{m_2 }_{\rho ,0 } (\Z_p \times \widehat{\Z}_p)$ respectively. Then $\sigma_1 \sigma_2 \in S^{m_1 + m_2}_{\rho , 0 } (\Z_p \times \widehat{\Z}_p)$.  
\end{pro}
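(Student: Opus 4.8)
The plan is to bound $|\triangle_\xi^\alpha D_x^\beta(\sigma_1\sigma_2)(x,|\xi|_p)|$ for all $\alpha,\beta\in\N_0$, and the point is that $D_x$ (the Vladimirov operator in the space variable) has no Leibniz rule, so I would reduce the $x$-estimates to Sobolev norms. First I would record a reformulation of the symbol class that will be used in both directions: using Lemma \ref{p-adicl1Hsnorms} together with the fact that the eigenvalue of $D_x^\beta$ at $\eta\in\widehat{\Z}_p$ is comparable to $\langle\eta\rangle^\beta$ for $\eta\neq o$ (the zero Fourier mode being controlled by the $\beta=0$ estimate, since $|\widehat{\sigma}(o,\xi)|\le\|\sigma(\cdot,\xi)\|_{L^\infty}$), one checks that a measurable $\sigma$ lies in $S^m_{\rho,0}(\Z_p\times\widehat{\Z}_p)$ if and only if for every $\alpha\in\N_0$ and every $t\ge 0$ one has $\|\triangle_\xi^\alpha\sigma(\cdot,|\xi|_p)\|_{H^t(\Z_p)}\le C_{\alpha,t}\langle\xi\rangle^{m-\rho\alpha}$. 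Indeed, the pointwise bound $\|D_x^\beta\triangle_\xi^\alpha\sigma(\cdot,\xi)\|_{L^\infty}\le C\langle\xi\rangle^{m-\rho\alpha}$ gives the $L^2$-bound, hence the $H^t$-bound with $t=\lceil\beta\rceil$ up to the zero mode; conversely $\|D_x^\beta\triangle_\xi^\alpha\sigma(\cdot,\xi)\|_{L^\infty}\lesssim\|D_x^\beta\triangle_\xi^\alpha\sigma(\cdot,\xi)\|_{H^1(\Z_p)}\lesssim\|\triangle_\xi^\alpha\sigma(\cdot,\xi)\|_{H^{\beta+1}(\Z_p)}$ by Lemma \ref{p-adicl1Hsnorms}.

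Next, for a fixed $\alpha$ I would apply the discrete Leibniz formula \cite[Lemma 3.3.6]{ruzhansky1} to $\triangle_\xi^\alpha$ acting on the product in its second variable, writing
$$\triangle_\xi^\alpha(\sigma_1\sigma_2)(x,|\xi|_p)=\sum_{\alpha_1+\alpha_2=\alpha}\binom{\alpha}{\alpha_1}\big(\triangle_\xi^{\alpha_1}\sigma_1\big)(x,|\xi|_p+\alpha_2)\,\big(\triangle_\xi^{\alpha_2}\sigma_2\big)(x,|\xi|_p),$$
a finite sum (the precise location of the bounded shift in the $|\xi|_p$-argument being irrelevant). Since $\langle|\xi|_p+c\rangle$ is comparable to $\langle\xi\rangle$ for any fixed $c$, the reformulation above applied to $\sigma_i\in S^{m_i}_{\rho,0}(\Z_p\times\widehat{\Z}_p)$ gives, for every $t\ge 0$, the bounds $\|(\triangle_\xi^{\alpha_i}\sigma_i)(\cdot,\text{shifted }|\xi|_p)\|_{H^t(\Z_p)}\le C_{t}\langle\xi\rangle^{m_i-\rho\alpha_i}$, $i=1,2$, uniformly in $x$.

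Then I would estimate each summand with Proposition \ref{normofproduct}: for every real $t\ge 1$,
$$\big\|\big(\triangle_\xi^{\alpha_1}\sigma_1\big)(\cdot,\text{shifted }|\xi|_p)\,\big(\triangle_\xi^{\alpha_2}\sigma_2\big)(\cdot,|\xi|_p)\big\|_{H^t(\Z_p)}\le 2^t\,\big\|\big(\triangle_\xi^{\alpha_1}\sigma_1\big)(\cdot,\text{shifted }|\xi|_p)\big\|_{H^t(\Z_p)}\,\big\|\widehat{J_t\big(\triangle_\xi^{\alpha_2}\sigma_2\big)(\cdot,|\xi|_p)}\big\|_{\ell^1(\widehat{\Z}_p)},$$
and I would bound the $\ell^1$-factor by a Sobolev norm exactly as in the proof of Lemma \ref{p-adicl1Hsnorms}, namely $\|\widehat{J_t g}\|_{\ell^1(\widehat{\Z}_p)}\le\big(\sum_{\eta\in\widehat{\Z}_p}\langle\eta\rangle^{-2}\big)^{1/2}\|g\|_{H^{t+1}(\Z_p)}$, the series being convergent. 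Combining this with the previous paragraph yields $\|(\text{summand})(\cdot,|\xi|_p)\|_{H^t(\Z_p)}\lesssim\langle\xi\rangle^{(m_1-\rho\alpha_1)+(m_2-\rho\alpha_2)}=\langle\xi\rangle^{m_1+m_2-\rho\alpha}$ for every $t\ge 1$; summing the finitely many summands and invoking the reformulation in the reverse direction gives $\sigma_1\sigma_2\in S^{m_1+m_2}_{\rho,0}(\Z_p\times\widehat{\Z}_p)$.

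The main obstacle is exactly the lack of a Leibniz rule for the $x$-operator $D_x$ appearing in the definition of the Hörmander classes: one cannot expand $D_x^\beta(\sigma_1\sigma_2)$ termwise, so the naive analogue of the toroidal argument fails. The remedy — trading the pointwise $D_x^\beta$-estimates for $H^t(\Z_p)$-estimates in $x$ via the comparability of Vladimirov eigenvalues with the weight $\langle\cdot\rangle$, and then using the multiplicative estimate of Proposition \ref{normofproduct} — is the real content of the proof; once the reformulation is set up, the $\xi$-difference part is the routine discrete Leibniz computation and the rest is bookkeeping with the weights $\langle\xi\rangle$.
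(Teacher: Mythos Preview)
Your proposal is correct and follows essentially the same approach as the paper's proof: apply the discrete Leibniz formula to $\Delta_\xi^\alpha$, then control the $x$-regularity of each summand via Proposition~\ref{normofproduct} (the $H^s$-product estimate) together with the $\ell^1$-Sobolev bound. You are in fact more explicit than the paper about the key reformulation (pointwise $D_x^\beta$-bounds $\Leftrightarrow$ $H^t$-bounds in $x$) and about the shift in the discrete Leibniz rule, both of which the paper's proof uses implicitly without comment.
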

\begin{proof}
By discrete product rule we get $$\Delta_\xi^\alpha \sigma_ 1 (x,|\xi|_p) \sigma_2 (x ,|\xi|_p) = \sum_{l \leq \alpha } { {\alpha}\choose{l}} \Delta_\xi^l \sigma_ 1 (x,|\xi|_p) \Delta_\xi^{\alpha}\sigma_ 2 (x,|\xi|_p).$$Using Proposition \ref{normofproduct} we get \begin{align*}
    ||\Delta_\xi^\alpha \sigma_ 1 (x,|\xi|_p) \sigma_2 (x ,|\xi|_p)||_{H^s(\Z_p)} &= ||\sum_{l \leq \alpha } { {\alpha}\choose{l}} \Delta_\xi^l \sigma_ 1 (x,|\xi|_p) \Delta_\xi^{\alpha-l}\sigma_ 2 (x,|\xi|_p)||_{H^s(\Z_p)}\\ & \leq \sum_{l \leq \alpha } { {\alpha}\choose{l}} ||\Delta_\xi^l \sigma_ 1 (x,|\xi|_p) \Delta_\xi^{\alpha-l}\sigma_ 2 (x,|\xi|_p)||_{H^s(\Z_p)}\\&\leq \sum_{l \leq \alpha } { {\alpha}\choose{l}} ||\Delta_\xi^l \sigma_ 1 (x,|\xi|_p)||_{H^s (\Z_p)} ||\widehat{J_s \Delta_\xi^{\alpha-l}\sigma_ 2 (x,|\xi|_p)}||_{\ell^1 (\widehat{\Z}_p)} \\ & \leq C \sum_{l \leq \alpha } { {\alpha}\choose{l}} \langle \xi \rangle^{m_1 - \rho l} \langle \xi \rangle^{m_2 - \rho(\alpha - l)} \\ & \leq C \langle \xi \rangle^{m_1 + m_2 - \rho \alpha }.
\end{align*}This finish the proof.
\end{proof}
Now we have enough tools to prove a first version of the composition formula:
\begin{pro}[Composition formula]\label{composition formula}
Let $T_{\sigma_1} \in Op (S^{m_1}_{\rho, 0} (\Z_p \times \widehat{\Z}_p))$, $T_{\sigma_2} \in Op(S^{m_2}_{\rho, 0} (\Z_p \times \widehat{\Z}_p))$ be pseudo-differential operators. Then $T_{\sigma_1} T_{\sigma_2} = T_\sigma$, $\sigma \in Op(S^{m_1 + m_2}_{\rho, 0} (\Z_p \times \widehat{\Z}_p))$ and $\sigma$ has the following asymptotic expansion $$\sigma (x , |\xi|_p) \sim \sum_{\beta < N} \frac{1}{\beta!} \partial_x^\beta \sigma_2 (x,|\xi|_p) \Delta^\beta_\xi \sigma_2 (x, |\xi|_p). $$
\end{pro}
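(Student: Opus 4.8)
The strategy is to follow the classical Kohn--Nirenberg argument, transposed to the $\Z_p$ setting as in \cite{ruzhansky1}. First I would compute the symbol of the composition directly from the definition $\sigma(x,|\xi|_p)=\overline{\chi_p(\xi x)}\,T_{\sigma_1}T_{\sigma_2}(\chi_p(\xi x))$. Applying $T_{\sigma_2}$ to $\chi_p(\xi x)$ gives $\sigma_2(x,|\xi|_p)\chi_p(\xi x)$; expanding $\sigma_2(\cdot,|\xi|_p)$ in its Fourier series and applying $T_{\sigma_1}$ term by term yields
\begin{align*}
\sigma(x,|\xi|_p)=\sum_{\eta\in\widehat{\Z}_p}\widehat{\sigma_2}(\eta,\xi)\,\sigma_1(x,|\xi+\eta|_p)\,\chi_p(\eta x).
\end{align*}
This is the exact composition formula; the asymptotic expansion will come from applying the discrete Taylor expansion (the theorem stated in the excerpt, via the operators $\partial_x^h$ and $\Delta_\xi^\gamma$) to the factor $\sigma_1(x,|\xi+\eta|_p)$ around $|\xi|_p$, writing $\sigma_1(x,|\xi+\eta|_p)=\sum_{\beta<N}\frac{1}{\beta!}(|\xi+\eta|_p-|\xi|_p)^\beta\Delta_\xi^\beta\sigma_1(x,|\xi|_p)+R_N(x,\eta,\xi)$, then summing over $\eta$ and recognizing $\sum_\eta \widehat{\sigma_2}(\eta,\xi)(|\xi+\eta|_p-|\xi|_p)^\beta\chi_p(\eta x)=\partial_x^\beta\sigma_2(x,|\xi|_p)$ by Definition \ref{p-adicdifference}.

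Second, I would verify that the leading-order terms lie in the correct classes: each summand $\frac{1}{\beta!}\partial_x^\beta\sigma_2(x,|\xi|_p)\Delta_\xi^\beta\sigma_1(x,|\xi|_p)$ belongs to $S^{m_1+m_2-\rho\beta}_{\rho,0}(\Z_p\times\widehat{\Z}_p)$ by the product proposition just proved (noting that $\partial_x^\beta$ preserves the $x$-regularity since $\delta=0$, by the Remark following Definition \ref{p-adicdifference} together with Plancherel, and that $\Delta_\xi^\beta\sigma_1$ gains $\langle\xi\rangle^{-\rho\beta}$). Hence the partial sums form an admissible decreasing sequence and, by Theorem \ref{asymptotic sum}, there is a genuine symbol $\sigma\in S^{m_1+m_2}_{\rho,0}$ realizing the asymptotic expansion; uniqueness follows because two operators with the same action on $\hb^\infty$ have the same symbol.

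Third, and this is where the real work lies, I would estimate the remainder $\sum_{\eta}\widehat{\sigma_2}(\eta,\xi)R_N(x,\eta,\xi)\chi_p(\eta x)$ and show it lies in $S^{m_1+m_2-\rho N}_{\rho,0}$. Using the remainder bound from the discrete Taylor expansion,
\begin{align*}
|\Delta_\xi^{\alpha}D_x^{\beta}R_N(x,\eta,\xi)|\lesssim \big||\xi+\eta|_p-|\xi|_p\big|^N\max_{\nu\in Q}\big|\Delta_\xi^{N+\alpha}\sigma_1(x,|\xi|_p+\nu)\big|\lesssim \langle\eta\rangle^{N}\langle\xi\rangle^{m_1-\rho(N+\alpha)}\langle\xi\rangle^{\text{(error from }\nu\text{)}},
\end{align*}
one has to control the shift $\nu\in Q(|\xi+\eta|_p-|\xi|_p)$ using Peetre's inequality so that $\langle\xi\rangle$-powers are not lost, and then absorb the growth $\langle\eta\rangle^{N}$ against the rapid decay of $\widehat{\sigma_2}(\eta,\xi)$ in $\eta$. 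The decay of $\widehat{\sigma_2}(\cdot,\xi)$ comes from the fact that $\sigma_2(\cdot,|\xi|_p)\in C^\infty(\Z_p)$ uniformly with $H^t$-norms bounded by $C_t\langle\xi\rangle^{m_2}$ (which is what the Hörmander estimates with $\delta=0$ give, after the Remark converting $D_x$-bounds to $\partial_x$-bounds), so $|\widehat{\sigma_2}(\eta,\xi)|\lesssim_t\langle\eta\rangle^{-t}\langle\xi\rangle^{m_2}$ for every $t$; choosing $t$ large enough to beat $\langle\eta\rangle^{N}$ and summability over $\widehat{\Z}_p$ makes the $\eta$-sum converge with the bound $\langle\xi\rangle^{m_1+m_2-\rho N-\rho\alpha}$. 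The main obstacle is bookkeeping the interplay between the $\nu$-shift in the Taylor remainder and the Peetre inequality so that no positive power of $\langle\xi\rangle$ creeps in — the hypothesis $\delta=0$ (equivalently $\rho>\delta$) is precisely what keeps the $x$-derivatives from costing anything and makes this estimate clean; once the remainder is controlled, reindexing and invoking the asymptotic-sum theorem finishes the proof.
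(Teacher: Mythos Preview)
Your proposal is correct and follows essentially the same route as the paper: compute $\sigma(x,|\xi|_p)=\sum_{\eta}\widehat{\sigma}_2(\eta,\xi)\,\sigma_1(x,|\xi+\eta|_p)\,\chi_p(\eta x)$, apply the discrete Taylor expansion to $\sigma_1(x,|\xi+\eta|_p)$ in the variable $|\xi|_p$, and recognize $\sum_\eta(|\xi+\eta|_p-|\xi|_p)^\beta\widehat{\sigma}_2(\eta,\xi)\chi_p(\eta x)=\partial_x^\beta\sigma_2(x,|\xi|_p)$. Your handling of the remainder is in fact more explicit than the paper's, which simply asserts $E_N\in S^{m_1+m_2-\rho N}_{\rho,0}$ by analogy with \cite[Theorem 4.7.10]{ruzhansky1}; the Peetre/decay bookkeeping you outline is exactly what that citation is hiding.
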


\begin{proof}
$$\sigma (x,|\xi|_p) = \overline{ \chi_p ( \xi x)} T_{\sigma_1} T_{\sigma_2} (\chi_p (\xi x)) = \sum_{\eta \in \widehat{\Z}_p}  \sigma_1 (x, |\eta + \xi|_p) \widehat{\sigma}_2 (\eta,|\xi|_p) \chi_p (\eta x).$$
Using the discrete Taylor formula \cite[Theorem 3.3.21]{ruzhansky1} we get \begin{align*}
    \sigma_1 (x, |\eta + \xi|_p) &= \sigma_1 (x, |\xi|_p + (|j + \xi|_p - |\xi|_p )) \\ &= \sum_{\beta < N}\frac{1}{\beta!} (|j + \xi|_p - |\xi|_p )^\beta  \Delta^\beta_\xi \sigma_1 (x, |\xi|_p) + R_N (x,|\xi|_p, |\xi-j|_p),
\end{align*} and then \begin{align*}
    \sigma (x,|\xi|)  &= \sum_{\beta < N} \sum_{\eta \in \widehat{\Z}_p}  \frac{1}{\beta!} (|\eta + \xi|_p - |\xi|_p )^\beta  \Delta^\beta_\xi \sigma_1 (x, |\xi|_p) \widehat{\sigma}_2 (\eta,|\xi|_p) + \sum_{\eta \in \widehat{\Z}_p} \chi_p(\eta x) R_N (x,|\xi|_p, |\xi-j|_p)  \\&= \sum_{\beta < N} \frac{1}{\beta!} \partial_x^\beta \sigma_2 (x,|\xi|_p) \Delta^\beta_\xi \sigma_1 (x, |\xi|_p) + E_N (x,\eta ,\xi) .
\end{align*}
Similar to \cite[Theorem 4.7.10]{ruzhansky1} we conclude that $E_N \in S^{m - N}_{\rho , 0} (\Z_p \times \widehat{\Z}_p ) $. This finish the proof.
\end{proof}
\begin{rem}
The results collected so far in this section can be summarized as follows: the class $Op(S^\infty_{\rho, \delta} (\Z_p \times \widehat{\Z}_p))$ is a filtered $*$-algebra of pseudo-differential operators. In the following section we will prove that there are other possible definitions of filtered $*$-algebras for classifying densely defined linear operators using its associated infinite matrix instead of its associated symbol.
\end{rem}
\section{\textbf{Relation with infinite matrix algebras}}
In linear algebra and functional analysis a common idea is to express linear operators by means of another mathematical object. There are usually two approaches for doing this: representing linear operators with matrices and representing them with symbols. So far we have used only the symbolic approach but in any Hilbert space $\ha$, using a Riesz basis for $\ha$, it is possible to use the matrix approach, expressing linear operators in terms of the associated matrix with respect to the given basis. In the cases where both approaches are available there should be a relation between them. In our case we are treating with linear operators acting on the Hilbert space $L^2 (\Z_p)$ where the Peter-Weyl theorem provide us an orthonormal basis so, if we look at the associated matrix of an operator instead of its symbol, we might expect to put some properties of the operator in terms of its matrix, as we have done with the symbol, and to find some relation with the symbolic approach. The purpose of this section is to make explicit that relation in the case when $\ha = L^2 (\Z_p)$. With that end let us begin by recalling the definition of the associated infinite matrix of a linear operator with respect to a given Riesz basis.
\begin{defi}[Associated infinite matrix]\normalfont\label{definfmatrix}
Given an infinite countable index set $I$, an infinite matrix indexed by $I$ is a function $M: I \times I \to \C$ with matrix entries defined by $M_{\xi \eta} := M(\xi,\eta)$, $\xi, \eta \in I$. If $M$ is an infinite matrix and $\varphi$ an infinite vector (or a function from $I$ to $\C$) then the product of the vector $\varphi$ an the matrix $M$ is defined as 
$$
    M \varphi (\xi) := \sum_{\eta \in I} M_{\xi \eta} \varphi (\eta).
$$For infinite matrices $P$ and $Q$ their product is defined as the infinite matrix with entries
\begin{align*}
    PQ_{\xi \eta} := \sum_{\gamma \in I} P_{\xi \gamma} Q_{\gamma \eta},
\end{align*}and as usual, the adjoint of the infinite matrix $M$ is the infinite matrix $M^*$ with entries

\begin{align*}
    M^*_{\xi \eta} := \overline{(M_{\eta \xi})}.
\end{align*}
Now, given a a Riesz basis $\{e_\xi\}_{\xi \in \I}$ of a Hilbert space $\ha$, with corresponding bi-orthogonal system $\{u_\xi\}_{\xi \in \I}$, and a densely defined linear operator $T: Span\{u_{\xi}\}_{\xi \in \I} \subset D(T) \subseteq \ha \to \ha$, the \emph{associated infinite matrix} $M_T$ of $T$ is the infinite matrix with entries $$(M_T)_{\xi \eta}:= (T e_\eta , u_\xi)_{\ha} , \esp \eta, \xi \in I.$$For a pseudo-differential operator $T_\sigma \in Op(\tilde{S}^m_{0, 0} (\Z_p \times \Z_p))$ we will denote its associated infinite matrix with respect to the orthonormal basis $\{\chi_p (\xi x) \}_{\xi \in \widehat{\Z}_p}$ by $M_\sigma$. 
\end{defi}
Once we have assigned an finite matrix to our linear operators, by using the boundedness of the Fourier transform from $L^2 (\Z_p)$ to $\ell^2 (\widehat{\Z}_p)$, we could think of pseudo-differential operators operators as infinite matrices acting on $\ell^2 (\widehat{\Z}_p)$. If we do that a natural question arise: \emph{is it possible to classify densely defined linear operators into a filtered $*$-algebra by means of its associated matrix?}. The answer is of course a positive answer and one method for doing that is to look at the decay properties of the matrix entries as in the work of S. Jaffard \cite{Jaffard}. This idea is well known and there are several interesting works about it. For example in \cite{Grochenig2010, Grochenig2006} K. Gr{\"o}chenig studied the following infinite matrices class:
\begin{defi}\normalfont\label{def0schurclasses}
Let $M$ be an infinite matrix indexed by $\widehat{\Z}_p$. We say that $M$ is in the \emph{Schur class} $\mathcal{S}_r (\widehat{\Z}_p)$, $r \geq 0$, if $$||M||_{\mathcal{S}_r (\widehat{\Z}_p)}:= \max \big\{ \sup_{\xi \in \widehat{\Z}_p} \sum_{\eta \in \widehat{\Z}_p} |M_{\eta \xi}| \langle \eta - \xi \rangle^r , \sup_{\eta \in \widehat{\Z}_p} \sum_{\xi \in \widehat{\Z}_p} |M_{\eta \xi}| \langle \eta - \xi \rangle^r  \big\}< \infty.$$
\end{defi}
For the Schur algebra $\mathcal{S}_r (\Z_p)$ the following properties are proven:
\begin{pro}\label{propertiesschuralgebra}
\esp
\begin{enumerate}
    \item[(i)] $\mathcal{S}_r (\widehat{\Z}_p)$ is a solid Banach $*$-algebra.
    \item[(ii)] $\mathcal{S}_r (\widehat{\Z}_p)$ is continuously embedded into $\mathcal{L}(\ell^r (\widehat{\Z}_p))$ for every $1 \leq r \leq \infty$.
    \item[(iii)] $\mathcal{S}_r (\widehat{\Z}_p)$ is inverse closed in $\mathcal{L}(\ell^2 (\widehat{\Z}_p))$.
\end{enumerate}
\end{pro}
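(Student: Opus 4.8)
The statement collects three classical facts about Gröchenig-type Schur algebras. These are not specific to the $p$-adic setting — they hold for any Schur-type class of matrices indexed by a discrete abelian group with a submultiplicative-up-to-constant weight $\langle \xi \rangle$. The plan is to verify each item essentially by hand, using only the definition of $\|\cdot\|_{\mathcal{S}_r(\widehat{\Z}_p)}$ and the Peetre inequality $\langle \xi + \eta \rangle^s \lesssim \langle \xi \rangle^{|s|}\langle \eta \rangle^s$ stated earlier (applied with $s = r \geq 0$, where it reads $\langle \xi + \eta \rangle^r \lesssim \langle \xi \rangle^r \langle \eta \rangle^r$).

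**Item (i): solid Banach $*$-algebra.** First I would check completeness: a Cauchy sequence in $\mathcal{S}_r$ converges entrywise, and the two supremum-of-weighted-row/column-sums are lower semicontinuous under entrywise limits, so the limit lies in $\mathcal{S}_r$ with the right norm bound — standard. Solidity ($|N_{\xi\eta}| \leq |M_{\xi\eta}|$ for all entries and $M \in \mathcal{S}_r$ implies $N \in \mathcal{S}_r$ with $\|N\| \leq \|M\|$) is immediate from monotonicity of the defining sums. The $*$-operation is isometric because $M^*_{\xi\eta} = \overline{M_{\eta\xi}}$ merely swaps the two terms in the $\max$ and leaves $\langle \eta - \xi\rangle = \langle \xi - \eta\rangle$ unchanged. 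The only computation with real content is submultiplicativity $\|PQ\|_{\mathcal{S}_r} \lesssim \|P\|_{\mathcal{S}_r}\|Q\|_{\mathcal{S}_r}$: writing $(PQ)_{\xi\eta} = \sum_\gamma P_{\xi\gamma}Q_{\gamma\eta}$, bound $\langle \xi - \eta\rangle^r \lesssim \langle \xi - \gamma\rangle^r \langle \gamma - \eta\rangle^r$ via Peetre, then split the double sum and estimate one factor by a row-sum of $P$ and the other by a column-sum of $Q$ (and symmetrically for the transposed condition). I'd note in passing that the resulting algebra has a constant-dependent submultiplicativity, so strictly it becomes a genuine Banach algebra after an equivalent renorming; this is the usual convention and I would state it as such.

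**Items (ii) and (iii).** For (ii): the case $r=1$ and $r=\infty$ is the Schur test — a matrix with uniformly bounded row sums and column sums is bounded on both $\ell^1$ and $\ell^\infty$, and since the weights $\langle \eta - \xi\rangle^r \geq 1$, membership in $\mathcal{S}_r$ forces finite unweighted row/column sums; then $\ell^r$ for $1 < r < \infty$ follows by Riesz–Thorin interpolation between the $\ell^1$ and $\ell^\infty$ bounds. For (iii), inverse-closedness in $\mathcal{L}(\ell^2)$: this is the deep point and I would simply cite it — it is precisely Gröchenig's theorem (the reference \cite{Grochenig2010} / \cite{Jaffard} already invoked before the statement), which rests on the Baskakov–Gohberg–Sjöstrand / Jaffard machinery showing that Schur-type algebras built from a weight satisfying the GRS (Gelfand–Raikov–Shilov) condition are spectral (symmetric) subalgebras of $\mathcal{L}(\ell^2)$. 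Since the weight $\langle \xi \rangle = \max\{1, |\xi|_p\}$ is sub-polynomial and $\mathcal{S}_r$ is exactly the $p$-adic instance of the general construction, the abstract theorem applies verbatim.

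**Main obstacle.** The only step that is not a short exercise is (iii); everything in (i) and (ii) is routine Schur-test bookkeeping plus one application of Peetre. Rather than reprove inverse-closedness, the honest plan is to observe that $\mathcal{S}_r(\widehat{\Z}_p)$ fits the hypotheses of Gröchenig's general framework — discrete group, polynomial weight, off-diagonal decay — and to quote the result, since reproving it would duplicate \cite{Grochenig2010, Jaffard} without any $p$-adic input.
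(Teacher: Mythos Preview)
Your proposal is correct and, in fact, goes well beyond what the paper does: the paper gives no proof of this proposition at all. The sentence immediately preceding the statement, ``For the Schur algebra $\mathcal{S}_r(\Z_p)$ the following properties are proven,'' is a citation, not an announcement of a forthcoming argument --- the proposition is stated as a known result from Gr{\"o}chenig \cite{Grochenig2010, Grochenig2006, Grochenig2010wiener} and Jaffard \cite{Jaffard}, and the paper proceeds directly to use it.

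Your plan is therefore strictly more than the paper offers: you sketch the routine verifications for (i) and (ii) (completeness, solidity, isometric involution, submultiplicativity via Peetre, Schur test plus interpolation), and for (iii) you do exactly what the paper does implicitly, namely invoke the Gr{\"o}chenig--Jaffard inverse-closedness theorem. One minor remark: in (ii) the paper overloads the letter $r$ (it is both the weight exponent in $\mathcal{S}_r$ and the Lebesgue exponent in $\ell^r$), which you silently and correctly disentangled by treating the $\ell^p$-boundedness for all $1\le p\le\infty$.
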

We are now interested in the relation between the H{\"o}rmander classes that we have defined and the Schur classes. Let us begin our analysis by calculating the entries of the associated matrix of a pseudo-differential operator $T_\sigma \in Op (S^\infty_{0, 0} (\Z_p \times \widehat{\Z}_p))$. 
By Definition \ref{definfmatrix} the Associated matrix $M_\sigma$ of $T_\sigma$ has entries $$(M_\sigma)_{\eta \xi} = (T_\sigma \chi_p (\xi \cdot) , \chi_p (\eta \cdot) )_{L^2 (\Z_p)}= \int_{\Z_p} \sigma (x, \xi) \overline{\chi_p ((\eta - \xi) x)} dx = \widehat{\sigma} (\eta - \xi , \xi).$$ Thus we see that the entries of the associated matrix are given by the Fourier coefficients of the symbol. Actually we might also write the matrix entries of the associated matrix in terms of the Fourier coefficients of the symbol of the adjoint operator: \begin{align*}
    (M_\sigma)_{\eta \xi} &= (T_\sigma \chi_p (\xi \cdot) , \chi_p (\eta \cdot) )_{L^2 (\Z_p)}\\ &=( \chi_p (\xi \cdot) , T_{\sigma^*} \chi_p (\eta \cdot) )_{L^2 (\Z_p)} \\ &= \int_{\Z_p} \overline{\sigma^* (x , \eta)} \esp \overline{\chi_p ((\eta - \xi)x)} dx = \overline{\widehat{\sigma}^*(\xi - \eta , \eta)}.
\end{align*} In this way $$\sum_{\eta \in \widehat{\Z}_p} |(M_\sigma)_{\eta \xi}| \langle \eta - \xi \rangle^r = \sum_{\eta \in \widehat{\Z}_p} \langle \eta \rangle^r |\widehat{\sigma}(\eta , \xi)| = ||\widehat{J_r \sigma (\cdot , \xi)}||_{\ell^1 (\widehat{\Z}_p)},$$and$$\sum_{\xi \in \widehat{\Z}_p} |(M_\sigma)_{\eta \xi}| \langle \eta - \xi \rangle^r=\sum_{\xi \in \widehat{\Z}_p} \langle \xi \rangle^{r} |\widehat{\sigma}^*(\xi , \eta)| = ||\widehat{J_r \sigma^* (\cdot , \eta)}||_{\ell^1 (\widehat{\Z}_p)}.$$From these equalities we obtain $$||M_\sigma||_{\mathcal{S}_r (\widehat{\Z}_p)} = \max \Big\{ \sup_{\xi \in \widehat{\Z}_p} ||\widehat{J_r \sigma (\cdot , \xi)}||_{\ell^1 (\widehat{\Z}_p)}\esp , \esp \sup_{\xi \in \widehat{\Z}_p} ||\widehat{J_r \sigma^* (\cdot , \xi)}||_{\ell^1 (\widehat{\Z}_p)} \Big\}.$$
Before continuing we introduce a new definition:
\begin{defi}
We will denote by $\mathcal{S}_r(\Z_p)$ the class of densely defined linear operators $T:\hb^\infty \subset D(T) \subseteq L^2 (\Z_p) \to L^2 (\Z_p)$ such that its associated infinite matrix $M_T$ belongs to the Schur class $\mathcal{S}_r (\widehat{\Z}_p)$. That is: $$\mathcal{S}_r(\Z_p):= \{T:\hb^\infty \subset D(T) \subseteq L^2 (\Z_p) \to L^2 (\Z_p) \esp : \esp M_T \in \mathcal{S}_r (\widehat{\Z}_p) \}.$$ 
\end{defi}
With the above definition we can prove the following relation between the H{\"o}rmander class $\Tilde{S}^0_{0,0}(\Z_p \times \widehat{\Z}_p)$ and the Schur classes $\mathcal{S}_r (\widehat{\Z}_p)$:

\begin{pro}
Let $T:\hb^\infty \subset D(T) \subseteq L^2 (\Z_p) \to L^2 (\Z_p)$ be a densely defined linear operator. Then $T \in Op(\Tilde{S}^0_{0,0} (\Z_p \times \widehat{\Z_p}))$ if and only if $T \in \bigcap_{r \geq 0} \mathcal{S}_r(\Z_p)$. That is: $$Op(S^0_{0,0} (\Z_p \times \widehat{\Z}_p))= \bigcap_{r \geq 0} \mathcal{S}_r(\Z_p).$$
\end{pro}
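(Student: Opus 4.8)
The plan is to unwind both sides into statements about the quantities $\sup_{\xi} \|\widehat{J_r \sigma(\cdot,\xi)}\|_{\ell^1(\widehat{\Z}_p)}$ and $\sup_{\xi} \|\widehat{J_r \sigma^*(\cdot,\xi)}\|_{\ell^1(\widehat{\Z}_p)}$, using the identity $\|M_\sigma\|_{\mathcal{S}_r(\widehat{\Z}_p)} = \max\{\sup_\xi \|\widehat{J_r\sigma(\cdot,\xi)}\|_{\ell^1}, \sup_\xi \|\widehat{J_r\sigma^*(\cdot,\xi)}\|_{\ell^1}\}$ derived just above. So $T \in \bigcap_{r\geq 0}\mathcal{S}_r(\Z_p)$ means precisely that for every $r \geq 0$ there is a constant $C_r$ with $\|\widehat{J_r\sigma(\cdot,\xi)}\|_{\ell^1(\widehat{\Z}_p)} \leq C_r$ and $\|\widehat{J_r\sigma^*(\cdot,\xi)}\|_{\ell^1(\widehat{\Z}_p)} \leq C_r$, uniformly in $\xi$. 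The goal is to show this is equivalent to $\sigma \in \Tilde{S}^0_{0,0}$, i.e. $|D^\beta_x \triangleplus_\eta^\xi \sigma(x,\xi)| \leq C_{\alpha,\beta}|\eta|_p^\alpha \langle\xi\rangle^{\delta\beta} = C_{\alpha,\beta}|\eta|_p^\alpha$ (with $\rho=\delta=0$) for $|\eta|_p \leq \langle\xi\rangle$.

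\textbf{Direction $\Leftarrow$ (Schur condition implies Hörmander condition).} Starting from the uniform $\ell^1$ bounds, I would first handle the $x$-derivatives: since $\widehat{D^\beta_x \sigma}(\eta,\xi) = |\eta|_p^\beta \widehat{\sigma}(\eta,\xi)$, the bound $\|\widehat{J_r\sigma(\cdot,\xi)}\|_{\ell^1} \leq C_r$ already controls $\sum_\eta \langle\eta\rangle^r|\widehat{\sigma}(\eta,\xi)|$, hence controls $\|\widehat{D^\beta_x\sigma(\cdot,\xi)}\|_{\ell^1}$ for any fixed $\beta$ (absorbing $|\eta|_p^\beta \leq \langle\eta\rangle^\beta$ into a larger $r$). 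This gives $\|D^\beta_x\sigma(\cdot,\xi)\|_{L^\infty(\Z_p)} \leq \|\widehat{D^\beta_x\sigma(\cdot,\xi)}\|_{\ell^1} \leq C'$, the $\alpha=0$ case. For $\alpha \geq 1$, one writes $\triangleplus_\eta^\xi\sigma(x,\xi) = \sigma(x,\xi+\eta)-\sigma(x,\xi)$ and its $x$-Fourier coefficient is $\widehat{\sigma}(\gamma,\xi+\eta) - \widehat{\sigma}(\gamma,\xi)$; the difficulty is that the $\ell^1$ bounds are given on each fixed $\xi$-slice and say nothing directly about differences across slices. The natural route here is to re-express the $\xi$-difference using the adjoint symbol: from $(M_\sigma)_{\eta\xi} = \overline{\widehat{\sigma}^*(\xi-\eta,\eta)}$ one has that differences in the second slot of $\widehat\sigma$ become differences in the first slot of $\widehat{\sigma}^*$, which are controlled by the $\ell^1$-bound on $\widehat{J_r\sigma^*(\cdot,\cdot)}$ together with summation by parts / telescoping and the weight $\langle\xi-\eta\rangle^r$ soaking up the extra factor $|\eta|_p^\alpha$ in the target estimate (valid precisely in the regime $|\eta|_p \leq \langle\xi\rangle$). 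This cross-slice transfer via the adjoint is, I expect, the main obstacle, and it is exactly why both the symbol and its adjoint appearing in the Schur norm is essential.

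\textbf{Direction $\Rightarrow$ (Hörmander condition implies Schur condition).} Here I would use Lemma \ref{p-adicl1Hsnorms} in reverse: to bound $\|\widehat{J_r\sigma(\cdot,\xi)}\|_{\ell^1(\widehat{\Z}_p)} = \sum_\eta \langle\eta\rangle^r|\widehat{\sigma}(\eta,\xi)|$ one inserts and removes a Sobolev weight, $\sum_\eta \langle\eta\rangle^r|\widehat\sigma(\eta,\xi)| \leq (\sum_\eta \langle\eta\rangle^{-2t})^{1/2}(\sum_\eta \langle\eta\rangle^{2(r+t)}|\widehat\sigma(\eta,\xi)|^2)^{1/2} = C_t\|\sigma(\cdot,\xi)\|_{H^{r+t}(\Z_p)}$ for $t>1/2$, so it suffices to show $\|\sigma(\cdot,\xi)\|_{H^N(\Z_p)} \leq C_N$ uniformly in $\xi$ for every $N$. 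Because $\sigma \in \Tilde{S}^0_{0,0}$ with $\delta = 0$, the estimate $|D^N_x\sigma(x,\xi)| \leq C_N\langle\xi\rangle^{0} = C_N$ holds (the $\alpha=0$ case of the defining inequality), and by Plancherel $\|\sigma(\cdot,\xi)\|_{H^N}^2 \approx \sum_{k\leq N}\|D^k_x\sigma(\cdot,\xi)\|_{L^2}^2 \leq \sum_{k\leq N}\|D^k_x\sigma(\cdot,\xi)\|_{L^\infty}^2 \leq C_N'$, uniformly in $\xi$. For the adjoint, I would first argue $\sigma^* \in \Tilde{S}^0_{0,0}$ as well — this should follow from the adjoint asymptotic expansion $\sigma^*(x,|\xi|_p) \sim \sum_\gamma \frac{1}{\gamma!}\Delta^\gamma_\xi\partial^\gamma_y\overline{\sigma(y,|\xi|_p)}|_{y=x}$ together with the already-established symbolic calculus (note $S^m_{0,0} \subset \Tilde{S}^m_{0,0}$, and each term in the expansion lies in $\Tilde{S}^{-\gamma}_{0,0} \subset \Tilde{S}^0_{0,0}$) — and then the same argument applied to $\sigma^*$ gives $\|\widehat{J_r\sigma^*(\cdot,\xi)}\|_{\ell^1} \leq C_r$. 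Combining, $\|M_\sigma\|_{\mathcal{S}_r(\widehat{\Z}_p)} < \infty$ for every $r \geq 0$, i.e. $T \in \bigcap_{r\geq 0}\mathcal{S}_r(\Z_p)$, completing the equivalence. (A side remark: the statement as printed writes $Op(S^0_{0,0})$ on the left of the displayed equality but $Op(\Tilde{S}^0_{0,0})$ in the sentence; I would prove it for $\Tilde{S}^0_{0,0}$, which is the class whose Fourier-side description via $\triangleplus$ matches the Schur weights, and flag the typo.)
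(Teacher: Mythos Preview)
Your forward direction ($\Rightarrow$) matches the paper's argument exactly: use that $\sigma,\sigma^*\in\Tilde S^0_{0,0}$ (the adjoint fact is proved in Section~7), take the $\alpha=0$ estimate $|D^\beta_x\sigma(x,\xi)|\leq C_\beta$ uniformly in $\xi$, and then feed this through the Sobolev-type bound of Lemma~\ref{p-adicl1Hsnorms} to get $\|\widehat{J_r\sigma(\cdot,\xi)}\|_{\ell^1}\leq C_r$ and likewise for $\sigma^*$.

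For the reverse direction ($\Leftarrow$) you have done more work than is needed, and the ``main obstacle'' you flag is not an obstacle at all. Because $\rho=\delta=m=0$, the defining inequality reads $|D^\beta_x\triangleplus_\eta^\xi\sigma(x,\xi)|\leq C_{\alpha,\beta}\,|\eta|_p^\alpha$. For $\eta\neq0$ in $\widehat{\Z}_p$ one has $|\eta|_p\geq p\geq1$, so $|\eta|_p^\alpha\geq1$ for every $\alpha\geq0$; hence all the $\alpha\geq1$ conditions are \emph{weaker} than the single uniform bound $|D^\beta_x\sigma(x,\xi)|\leq C_\beta$ (the difference $\triangleplus_\eta^\xi$ is then bounded by $2C_\beta\leq 2C_\beta|\eta|_p^\alpha$). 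The paper therefore simply observes
\[
|D^r_x\sigma(x,\xi)|\ \lesssim\ \|\widehat{J_r\sigma(\cdot,\xi)}\|_{\ell^1(\widehat{\Z}_p)}\ \leq\ C_r
\]
and declares $\sigma\in\Tilde S^0_{0,0}$, with no cross-slice argument, no use of the adjoint bound, and no ``transfer'' step. Your sketched route for $\alpha\geq1$ via $(M_\sigma)_{\eta\xi}=\overline{\widehat{\sigma}^*(\xi-\eta,\eta)}$ is at best vague (a $\xi$-difference of $\widehat\sigma(\gamma,\xi)$ moves \emph{both} matrix indices, not just the first slot of $\widehat{\sigma}^*$), but the point is moot: you don't need it.

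Your observation about the typo (the displayed equality should read $Op(\Tilde S^0_{0,0})$, matching the sentence) is correct.
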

\begin{proof}
As we will prove in Section 7 if $\sigma \in S^0_{0,0} (\Z_p \times \widehat{\Z}_p)$ then also $\sigma^* \in S^0_{0,0} (\Z_p \times \widehat{\Z_p})$ and the following estimates hold: $$|D^r \sigma (x , \xi )| \leq C_{\sigma , r}, \esp \esp |D^r \sigma^* (x,\xi)| \leq C_{\sigma^* , r} ,$$ for every $r \geq 0$ and every $\xi \in \widehat{\Z}_p$. Hence by Lemma \ref{p-adicl1Hsnorms} we obtain $$||\widehat{J_r \sigma (\cdot , \xi)}||_{\ell^1 (\widehat{\Z}_p)} \leq C_{\sigma ,r}, \esp \esp ||\widehat{J_r \sigma^* (\cdot , \xi)}||_{\ell^1 (\widehat{\Z}_p)} \leq C_{\sigma^* ,r},$$ for every $\xi \in \widehat{\Z}_p$ and every $r \geq 0$. Thus $T_\sigma \in \mathcal{S}_r (\Z_p)$ for every $r \geq 0$. Conversely, if $T \in \mathcal{S}_r (\Z_p)$ for every $r \geq 0$ then $$|D^r \sigma (x,\xi)| \lesssim ||\widehat{J_r \sigma_T (\cdot , \xi)}||_{\ell^1 (\widehat{\Z}_p)} \leq C_{T ,r},$$ for every $r \geq 0$ and every $\xi \in \widehat{\Z}_p$. Then $\sigma_T  \in \Tilde{S}^0_{0,0} (\Z_p \times \widehat{\Z}_p)$.
\end{proof}
\begin{coro}
The class $Op(\Tilde{S}^0_{0,0} (\Z_p \times \widehat{\Z_p}))$ is inverse closed.
\end{coro}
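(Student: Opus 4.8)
The plan is to reduce the statement to the inverse-closedness of the Schur algebras $\mathcal{S}_r(\widehat{\Z}_p)$ established in Proposition \ref{propertiesschuralgebra}(iii), by passing from operators to their associated infinite matrices. Concretely, I would view $Op(\Tilde{S}^0_{0,0}(\Z_p\times\widehat{\Z}_p))$ as a subalgebra of $\mathcal{L}(L^2(\Z_p))$; this is legitimate thanks to Corollary \ref{coroboundedhormanderclasses} in the case $s=m=0$. So suppose $T=T_\sigma\in Op(\Tilde{S}^0_{0,0}(\Z_p\times\widehat{\Z}_p))$ is invertible in $\mathcal{L}(L^2(\Z_p))$; the goal is to show $T^{-1}\in Op(\Tilde{S}^0_{0,0}(\Z_p\times\widehat{\Z}_p))$.

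First I would record that the Fourier transform $\mathcal{F}_{\Z_p}\colon L^2(\Z_p)\to\ell^2(\widehat{\Z}_p)$ is unitary by Plancherel, and that under this identification a pseudo-differential operator $T_\sigma$ acts on $\ell^2(\widehat{\Z}_p)$ exactly as its associated infinite matrix $M_\sigma$ of Definition \ref{definfmatrix}. In particular $T\mapsto M_T$ is an injective, multiplicative map that is unitarily conjugate to the tautological representation of $\mathcal{L}(\ell^2(\widehat{\Z}_p))$; hence $M_T$ is invertible in $\mathcal{L}(\ell^2(\widehat{\Z}_p))$ and its inverse is $M_{T^{-1}}$, the associated matrix of the bounded operator $T^{-1}$. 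Here one only needs to observe that the symbol $\sigma_{T^{-1}}(x,\xi)=\overline{\chi_p(\xi x)}\,T^{-1}\chi_p(\xi x)$ is well defined, since $T^{-1}$ is bounded on $L^2(\Z_p)$ and each $\chi_p(\xi\cdot)$ lies in $L^2(\Z_p)$, so that the associated-matrix formalism applies to $T^{-1}$.

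Next I would invoke the proposition immediately preceding this corollary, which identifies $Op(\Tilde{S}^0_{0,0}(\Z_p\times\widehat{\Z}_p))=\bigcap_{r\geq 0}\mathcal{S}_r(\Z_p)$; in particular $M_T\in\mathcal{S}_r(\widehat{\Z}_p)$ for every $r\geq 0$. By Proposition \ref{propertiesschuralgebra}(iii) each $\mathcal{S}_r(\widehat{\Z}_p)$ is inverse closed in $\mathcal{L}(\ell^2(\widehat{\Z}_p))$, so from the invertibility of $M_T$ in $\mathcal{L}(\ell^2(\widehat{\Z}_p))$ we get $M_T^{-1}\in\mathcal{S}_r(\widehat{\Z}_p)$ for every $r\geq 0$. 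Since $M_T^{-1}=M_{T^{-1}}$, this says precisely $T^{-1}\in\bigcap_{r\geq 0}\mathcal{S}_r(\Z_p)=Op(\Tilde{S}^0_{0,0}(\Z_p\times\widehat{\Z}_p))$, which is the assertion.

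The only genuine work is in the bookkeeping of the second paragraph: checking that $T\mapsto M_T$ is multiplicative and unitarily conjugate to the $\ell^2(\widehat{\Z}_p)$ representation, so that invertibility of $T$ on $L^2(\Z_p)$ is equivalent to invertibility of $M_T$ on $\ell^2(\widehat{\Z}_p)$ with $M_{T^{-1}}=M_T^{-1}$. I do not expect any real obstacle here; all the substance of the statement has already been packaged into the identification with $\bigcap_{r\geq 0}\mathcal{S}_r(\Z_p)$ and into part (iii) of Proposition \ref{propertiesschuralgebra}.
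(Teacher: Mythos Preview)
Your proposal is correct and follows exactly the line the paper intends: the corollary is stated without proof immediately after the identification $Op(\Tilde{S}^0_{0,0}(\Z_p\times\widehat{\Z}_p))=\bigcap_{r\geq 0}\mathcal{S}_r(\Z_p)$, and is meant to be read as an immediate consequence of that identification together with the inverse-closedness of each $\mathcal{S}_r(\widehat{\Z}_p)$ in $\mathcal{L}(\ell^2(\widehat{\Z}_p))$ from Proposition~\ref{propertiesschuralgebra}(iii). The bookkeeping you spell out (unitarity of $\mathcal{F}_{\Z_p}$, $M_{T^{-1}}=M_T^{-1}$) is precisely the routine verification the paper leaves implicit.
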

We can prove a similar result for the operator classes $Op(\Tilde{S}^m_{0,0} (\Z_p \times \widehat{\Z_p}))$, $m \neq 0$, if we properly modify the definition of Schur classes.
\begin{defi}\normalfont
\esp
Let $M$ be an infinite matrix indexed by $\widehat{\Z}_p$. We say that $M$ is in the \emph{Schur class} $\mathcal{S}_r^m (\widehat{\Z}_p)$, $r \geq 0$ and $m \in \R$, if $$||M||_{\mathcal{S}_r^m (\widehat{\Z}_p)}:= \max \big\{ \sup_{\xi \in \widehat{\Z}_p} \langle \xi \rangle^{-m}\sum_{\eta \in \widehat{\Z}_p} |M_{\eta \xi}| \langle \eta - \xi \rangle^r , \sup_{\eta \in \widehat{\Z}_p} \langle \eta \rangle^{-m} \sum_{\xi \in \widehat{\Z}_p} |M_{\eta \xi}| \langle \eta - \xi \rangle^r  \big\}< \infty.$$We will denote by $\mathcal{S}_r^m (\Z_p)$ the class of linear operators $T$ such that its associated infinite matrix $M_T$ belongs to the Schur class $\mathcal{S}_r^m (\widehat{\Z}_p)$. That is: $$\mathcal{S}_r(\Z_p):= \{T:\hb^\infty \subset D(T) \subseteq L^2 (\Z_p) \to L^2 (\Z_p) \esp : \esp M_T \in \mathcal{S}_r^m (\widehat{\Z}_p) \}.$$We fix the notation $$||T||_{\mathcal{S}_r (\Z_p)} := ||M_T||_{\mathcal{S}(\widehat{\Z}_p)}.$$ 
\end{defi}
So, with the same arguments as before one can easily prove:
\begin{pro}
Let $T:\hb^\infty \subset D(T) \subseteq L^2 (\Z_p) \to L^2 (\Z_p)$ be a densely defined linear operator. Then $T \in Op(\Tilde{S}^m_{0,0} (\Z_p \times \widehat{\Z_p}))$ if and only if $T \in \bigcap_{r \geq 0} \mathcal{S}_r^m (\Z_p)$. That is: $$Op(\Tilde{S}^m_{0,0} (\Z_p \times \widehat{\Z}_p))= \bigcap_{r \geq 0} \mathcal{S}_r^m(\Z_p).$$
\end{pro}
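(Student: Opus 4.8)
The plan is to mimic the argument of the preceding proposition (the case $m=0$) and show that the $\langle\xi\rangle^{-m}$ normalisation appearing in $\mathcal{S}_r^m(\widehat{\Z}_p)$ is exactly matched by the $\langle\xi\rangle^{m}$ growth allowed in $\Tilde S^m_{0,0}(\Z_p\times\widehat{\Z}_p)$. First I would record, exactly as in the computation preceding the $m=0$ proposition, that the associated matrix of $T_\sigma$ has entries $(M_\sigma)_{\eta\xi}=\widehat\sigma(\eta-\xi,\xi)=\overline{\widehat{\sigma^*}(\xi-\eta,\eta)}$, so that
\begin{align*}
\sum_{\eta\in\widehat{\Z}_p}|(M_\sigma)_{\eta\xi}|\langle\eta-\xi\rangle^r &= \|\widehat{J_r\sigma(\cdot,\xi)}\|_{\ell^1(\widehat{\Z}_p)},\\
\sum_{\xi\in\widehat{\Z}_p}|(M_\sigma)_{\eta\xi}|\langle\eta-\xi\rangle^r &= \|\widehat{J_r\sigma^*(\cdot,\eta)}\|_{\ell^1(\widehat{\Z}_p)}.
\end{align*}
Hence $\|M_\sigma\|_{\mathcal{S}_r^m(\widehat{\Z}_p)}=\max\{\sup_\xi\langle\xi\rangle^{-m}\|\widehat{J_r\sigma(\cdot,\xi)}\|_{\ell^1},\ \sup_\xi\langle\xi\rangle^{-m}\|\widehat{J_r\sigma^*(\cdot,\xi)}\|_{\ell^1}\}$, which reduces everything to controlling these two $\ell^1$-norms by $\langle\xi\rangle^m$.

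For the direction $T\in Op(\Tilde S^m_{0,0})\Rightarrow T\in\bigcap_{r\ge0}\mathcal{S}_r^m(\Z_p)$, I would invoke the adjoint formula from Section 7 (as the $m=0$ proof does): if $\sigma\in\Tilde S^m_{0,0}(\Z_p\times\widehat{\Z}_p)$ then $\sigma^*\in\Tilde S^m_{0,0}(\Z_p\times\widehat{\Z}_p)$ as well, and both satisfy $|D^r_x\sigma(x,\xi)|\le C_{\sigma,r}\langle\xi\rangle^m$ and $|D^r_x\sigma^*(x,\xi)|\le C_{\sigma^*,r}\langle\xi\rangle^m$ for every $r\ge0$ (this is the $\alpha=0$ instance of the defining estimate, after passing to $D_x$-derivatives via the remark comparing $\partial_x^h$ with $D^h$). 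Then for each fixed $\xi$, Lemma \ref{p-adicl1Hsnorms} applied to $x\mapsto\sigma(x,\xi)$ with a fixed auxiliary exponent $\beta>1/2$ gives
$$\|\widehat{J_r\sigma(\cdot,\xi)}\|_{\ell^1(\widehat{\Z}_p)}\lesssim \|\sigma(\cdot,\xi)\|_{H^{r+\beta}(\Z_p)}\lesssim \|D_x^{r+\beta'}\sigma(\cdot,\xi)\|_{L^\infty}\lesssim C_{\sigma,r}\langle\xi\rangle^m,$$
and the same for $\sigma^*$; dividing by $\langle\xi\rangle^m$ yields $\|M_\sigma\|_{\mathcal{S}_r^m(\widehat{\Z}_p)}<\infty$ for every $r\ge0$. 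Conversely, if $T\in\mathcal{S}_r^m(\Z_p)$ for all $r\ge0$, then from $\langle\xi\rangle^{-m}\|\widehat{J_r\sigma_T(\cdot,\xi)}\|_{\ell^1}\le C_{T,r}$ and the pointwise bound $|D^r_x\sigma_T(x,\xi)|\lesssim\|\widehat{J_r\sigma_T(\cdot,\xi)}\|_{\ell^1(\widehat{\Z}_p)}$ (embedding $\ell^1$ Fourier coefficients into $L^\infty$, together with the $\partial_x^h$–$D^h$ remark) one gets $|D^r_x\sigma_T(x,\xi)|\lesssim\langle\xi\rangle^m$ for all $r$, i.e. $\sigma_T\in\Tilde S^m_{0,0}(\Z_p\times\widehat{\Z}_p)$.

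The main obstacle, and the one point where some care is genuinely needed beyond the $m=0$ case, is the bookkeeping of the $\langle\xi\rangle^{-m}$ weight and the fact that $\Tilde S^m_{0,0}$ is only defined through the mixed difference–derivative estimate $|D^\beta_x\triangleplus^\xi_\eta\sigma(x,\xi)|\le C|\eta|_p^\alpha\langle\xi\rangle^{m+\delta\beta}$ with $|\eta|_p\le\langle\xi\rangle$, rather than directly through bounds on $\widehat\sigma(\eta,\xi)$; one must make sure the equivalence with ``$|D^r_x\sigma(x,\xi)|\le C_r\langle\xi\rangle^m$ for all $r$'' (the $\alpha=0$ slice) is legitimate and that the results from Section 7 being quoted — stability of the class under taking adjoints with no loss in the order $m$ — indeed hold for general $m$, which they do since the adjoint asymptotic expansion $\sigma^*\sim\sum_\gamma\frac1{\gamma!}\Delta^\gamma_\xi\partial^\gamma_y\overline{\sigma(y,|\xi|_p)}$ preserves the order. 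Once those two facts are in hand the argument is a verbatim repetition of the $m=0$ proof with $C_{\sigma,r}$ replaced by $C_{\sigma,r}\langle\xi\rangle^m$ throughout.
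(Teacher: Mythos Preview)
Your proposal is correct and follows exactly the route the paper intends: the paper does not give a separate proof here but simply writes ``with the same arguments as before one can easily prove'', so your write-up is in fact more detailed than the paper's own treatment. The strategy---express $\|M_\sigma\|_{\mathcal{S}_r^m}$ via $\langle\xi\rangle^{-m}\|\widehat{J_r\sigma(\cdot,\xi)}\|_{\ell^1}$ and $\langle\xi\rangle^{-m}\|\widehat{J_r\sigma^*(\cdot,\xi)}\|_{\ell^1}$, invoke the adjoint stability $\sigma\in\Tilde S^m_{0,0}\Rightarrow\sigma^*\in\Tilde S^m_{0,0}$ from Section~7, and pass between $\ell^1$ Fourier bounds and $|D^r_x\sigma|$ via Lemma~\ref{p-adicl1Hsnorms}---is precisely the $m=0$ argument with the extra weight $\langle\xi\rangle^{-m}$ carried through, which is what the paper is asking the reader to do.
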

\begin{coro}\label{coroboundedinverse}
Let $T_\sigma$ be a pseudo-differential operator with symbol $\sigma \in \Tilde{S}^m_{0,0} (\Z_p \times \widehat{\Z}_p)$. Then if $T_\sigma \in \mathcal{L}(H^{s+m}(\Z_p), H^{s} (\Z_p))$ is boundedly invertible, that is, there exists a linear operator $T_\sigma^{-1} \in \mathcal{L}(H^s(\Z_p), H^{s+m}(\Z_p))$ such that $T_\sigma^{-1}T_\sigma = I_{H^{s+m}(\Z_p)} , \esp T_\sigma T_\sigma^{-1} = I_{H^{s}(\Z_p)}$, its inverse $T_\sigma^{-1}$ is a pseudo-differential operator with symbol $\sigma^{-1}$ in the H{\"o}rmander class $\Tilde{S}^{-m}_{0,0} (\Z_p \times \widehat{\Z}_p)$.
\end{coro}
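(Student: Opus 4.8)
The strategy is to translate the statement into a statement about infinite matrices and exploit the inverse-closedness of the Schur algebras $\mathcal{S}_r(\widehat{\Z}_p)$ from Proposition \ref{propertiesschuralgebra}(iii), together with the identifications $Op(\tilde{S}^m_{0,0}(\Z_p\times\widehat{\Z}_p))=\bigcap_{r\geq 0}\mathcal{S}_r^m(\Z_p)$ established just above. First I would remove the order $m$ by conjugation: put $A:=J_s T_\sigma J_{-(s+m)}$. Since $J_s$ is an isometric isomorphism $H^s(\Z_p)\to L^2(\Z_p)$ and $J_{-(s+m)}$ one from $L^2(\Z_p)$ onto $H^{s+m}(\Z_p)$, the operator $T_\sigma$ is boundedly invertible from $H^{s+m}(\Z_p)$ onto $H^s(\Z_p)$ if and only if $A$ is boundedly invertible on $L^2(\Z_p)$, and in that case $A^{-1}=J_{s+m}T_\sigma^{-1}J_{-s}$, equivalently $T_\sigma^{-1}=J_{-(s+m)}A^{-1}J_s$.

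Because each $J_t$ is a Fourier multiplier, its associated matrix is diagonal, $(M_{J_t})_{\eta\xi}=\langle\xi\rangle^t\delta_{\eta\xi}$, and since the assignment $T\mapsto M_T$ is multiplicative we get $(M_A)_{\eta\xi}=\langle\eta\rangle^s(M_\sigma)_{\eta\xi}\langle\xi\rangle^{-(s+m)}$. By hypothesis $\sigma\in\tilde{S}^m_{0,0}(\Z_p\times\widehat{\Z}_p)$, so $M_\sigma\in\mathcal{S}_r^m(\widehat{\Z}_p)$ for every $r\geq 0$; applying Peetre's inequality in the forms $\langle\eta\rangle^s\lesssim\langle\eta-\xi\rangle^{|s|}\langle\xi\rangle^s$ and $\langle\xi\rangle^{-(s+m)}\lesssim\langle\eta-\xi\rangle^{|s+m|}\langle\eta\rangle^{-(s+m)}$ to the row and column sums defining the $\mathcal{S}_r$-norm, the extra weights $\langle\eta\rangle^s$ and $\langle\xi\rangle^{-(s+m)}$ are absorbed into larger powers of $\langle\eta-\xi\rangle$ at the cost of replacing $r$ by $r+|s|$ or $r+|s+m|$, while the residual power of $\langle\xi\rangle$ (resp.\ $\langle\eta\rangle$) is exactly the weight $\langle\xi\rangle^{m}$ (resp.\ $\langle\eta\rangle^{m}$) built into $\mathcal{S}_r^m$. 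Hence $M_A\in\mathcal{S}_r(\widehat{\Z}_p)$ for every $r\geq 0$, i.e.\ $A\in Op(\tilde{S}^0_{0,0}(\Z_p\times\widehat{\Z}_p))$; in particular $A\in\mathcal{L}(L^2(\Z_p))$.

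Under the Fourier transform $\mathcal{F}_{\Z_p}\colon L^2(\Z_p)\to\ell^2(\widehat{\Z}_p)$ the operator $A$ acts as multiplication by the matrix $M_A$, so the bounded invertibility of $A$ on $L^2(\Z_p)$ is the same as $M_A$ being invertible in $\mathcal{L}(\ell^2(\widehat{\Z}_p))$, with $(M_A)^{-1}=M_{A^{-1}}$. Since $M_A\in\mathcal{S}_r(\widehat{\Z}_p)$ and $\mathcal{S}_r(\widehat{\Z}_p)$ is inverse-closed in $\mathcal{L}(\ell^2(\widehat{\Z}_p))$, we conclude $M_{A^{-1}}\in\mathcal{S}_r(\widehat{\Z}_p)$ for every $r\geq 0$, i.e.\ $A^{-1}\in\bigcap_{r\geq 0}\mathcal{S}_r(\Z_p)=Op(\tilde{S}^0_{0,0}(\Z_p\times\widehat{\Z}_p))$. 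Finally I would undo the conjugation: $(M_{T_\sigma^{-1}})_{\eta\xi}=\langle\eta\rangle^{-(s+m)}(M_{A^{-1}})_{\eta\xi}\langle\xi\rangle^{s}$, and one more application of Peetre's inequality absorbs $\langle\eta\rangle^{-(s+m)}$ and $\langle\xi\rangle^{s}$ into powers of $\langle\eta-\xi\rangle$ (again enlarging $r$), the net surviving power of $\langle\xi\rangle$ (resp.\ $\langle\eta\rangle$) being $\langle\xi\rangle^{-m}$ (resp.\ $\langle\eta\rangle^{-m}$), which is precisely the weight defining $\mathcal{S}_r^{-m}$. Therefore $M_{T_\sigma^{-1}}\in\mathcal{S}_r^{-m}(\widehat{\Z}_p)$ for every $r\geq 0$, so $T_\sigma^{-1}\in\bigcap_{r\geq 0}\mathcal{S}_r^{-m}(\Z_p)=Op(\tilde{S}^{-m}_{0,0}(\Z_p\times\widehat{\Z}_p))$, and its symbol $\sigma^{-1}$ lies in $\tilde{S}^{-m}_{0,0}(\Z_p\times\widehat{\Z}_p)$.

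The only step carrying genuine content is the passage through the inverse-closed Schur algebra $\mathcal{S}_r(\widehat{\Z}_p)$; everything else is bookkeeping: conjugating associated matrices by the diagonal multipliers $M_{J_t}$ and repeatedly invoking Peetre's inequality. The main obstacle is purely organizational, namely that each conjugation forces one to enlarge the Schur parameter $r$ and to track the order weight $\langle\xi\rangle^{\pm m}$ correctly, so that the whole family of estimates survives in the limit $r\to\infty$; there is no single hard estimate, only the need to keep the conjugation weights and the Schur weights aligned.
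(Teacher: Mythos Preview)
Your proposal is correct and follows essentially the same approach as the paper: reduce to the order-zero case by composing with Bessel potentials $J_t$ and then invoke the inverse closedness of $Op(\tilde{S}^0_{0,0}(\Z_p\times\widehat{\Z}_p))=\bigcap_{r\geq 0}\mathcal{S}_r(\Z_p)$ inherited from the Schur algebras. The paper's proof is the two-line version of yours---it simply observes that $T_\sigma J_{-m}$ and $J_{-m}T_\sigma$ lie in $Op(\tilde{S}^0_{0,0})$ and are invertible, hence so are their inverses $J_m T_\sigma^{-1}$ and $T_\sigma^{-1}J_m$---whereas you unpack the same reduction at the matrix level with explicit Peetre estimates and also track the Sobolev index $s$ through the conjugation $J_s T_\sigma J_{-(s+m)}$.
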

\begin{proof}
If $T_\sigma$ is invertible then $T_\sigma J_{-m} , J_{-m} T_\sigma \in \Tilde{S}^{0}_{0,0} (\Z_p \times \widehat{\Z}_p)$ are invertible as well with inverse $J_{m} T_\sigma^{-1}, T_\sigma^{-1} J_{m} \in \Tilde{S}^{0}_{0,0} (\Z_p \times \widehat{\Z}_p)$ respectively. This conclude the proof.
\end{proof}
We finally arrived to the main point of this section. We proved a relation between the Schur infinite matrix algebra and the H{\"o}rmander classes that we defined in the present work. The arguments used along this section are also valid in the toroidal case. That is, we can prove the equality $$Op(S^m_{0,0} (\T^d \times \Z^d))= \bigcap_{r \geq 0} \mathcal{S}_r^m(\T^d),$$ where the classes $\mathcal{S}_r^m(\Z^d)$ and $\mathcal{S}_r^m(\T^d)$ are analogously defined for infinite matrices indexed by $\Z^d$ and densely defined linear operators $T:L^2 (\T^d) \to L^2 (\T^d)$. The definition of the toroidal H{\"o}rmander classes is given in \cite{ruzhansky1}. This is our argument for asserting that the proper definition of H{\"o}rmander classes should include the estimates on the Vladimirov operator applied to the symbol: in that way our pseudo-differential operators have associated matrices with desirable properties in order to assure that they belong to the well known infinite matrix classes that may be found in the literature such as the Jaffard classes \cite{Jaffard}. However, as we mentioned before, in all this analysis we have not included the special properties of the non-archimidean absolute values on $\Z_p$, but we will do that in the next section.

\section{\textbf{Symbolic Calculus} II}
Recall the notation in Definition \ref{hormanderclasses2}. In \cite{pseudosvinlekin} L. Saloff-Coste proposed the following definition of symbol classes: Let $m \in \R$ and $0 \leq \delta \leq  \rho \leq 1$ be real numbers. A continuous function $\sigma : \Z_p \times  \widehat{\Z_p} \to \C$ belongs to the symbol class $\Check{S}^m_{\rho , \delta} (\Z_p \times  \widehat{ \Z}_p)$ if the following estimate holds: $$|\triangleplus_y^x \triangleplus_\eta^\xi \sigma (x , \xi)| \leq C_{m , \alpha , \beta, \rho , \delta } |y|_p^\beta |\eta|_p^\alpha \langle \xi \rangle^{m - \rho \alpha + \delta \beta},$$ for some constant $C_{ \rho , \delta } > 0$ and every $\alpha , \beta \in \N_0$. For linear operators with associated symbols in these classes L. Saloff-Costeau proved for $\delta = 0 , \rho = 1$, among other important properties, the following:
\begin{enumerate}
    \item[(i)] If $\sigma_1 \in \Check{S}^{m_1}_{\rho , \delta} (\Z_p \times  \widehat{ \Z}_p)$ and $\sigma_2 \in \Check{S}^{m_2}_{\rho , \delta} (\Z_p \times  \widehat{ \Z}_p)$ then the associated symbol of $T_{\sigma_1} T_{\sigma_2}$, let us call it $\sigma$, belongs to the symbol class $\Check{S}^{m_1 + m_2}_{\rho , \delta} (\Z_p \times  \widehat{ \Z}_p)$, and moreover $$\sigma - \sigma_1 \sigma_2 \in \bigcap_{m \in \R} \Check{S}^m_{\rho , \delta} (\Z_p \times  \widehat{ \Z}_p).$$
    \item[(ii)] If $\sigma \in \Check{S}^m_{\rho , \delta} (\Z_p \times  \widehat{ \Z}_p)$ the symbol $\sigma^*$ of the adjoint operator $T_\sigma^*$ belongs to the symbol class and moreover $$\sigma - \overline{\sigma} \in \bigcap_{m \in \R} \Check{S}^m_{\rho , \delta} (\Z_p \times  \widehat{ \Z}_p).$$
\end{enumerate}
The above gives one a symbolic calculus: a composition formula and an adjoint formula. This calculus is very special because for many purposes operators in the class $$\bigcap_{m \in \R} \Check{S}^m_{\rho , \delta} (\Z_p \times  \widehat{ \Z}_p) = \bigcap_{m \in \R} \Check{S}^m_{ 0,0} (\Z_p \times  \widehat{ \Z}_p) = \bigcap_{m \in \R} \Tilde{S}^m_{0 , 0} (\Z_p \times  \widehat{ \Z}_p),$$which is the analogue of the class of infinitely smoothing operators, are negligible. Indeed, properties of the calculus defined by L. Saloff-Coste are quite different to the well known archimedean setting, and in some sense they are much better. We will try to make the last statement precise at the same time that we show that our H{\"o}rmander classes $\Tilde{S}^m_{\rho , \delta} (\Z_p \times  \widehat{ \Z}_p)$ have the same good properties as $\Check{S}^m_{1, 0} (\Z_p \times  \widehat{ \Z}_p)$. 

First, we give our version of symbolic calculus. 

\begin{pro}
Let $0 <\rho \leq 1$. Let $T_{\sigma_1}$, $T_{\sigma_2}$ be pseudo-differential operators with symbols in the Hörmander classes $\sigma_1 \in \Tilde{S}^{m_1}_{\rho , 0} (\Z_p \times \widehat{\Z}_p)$,  $\sigma_2 \in \Tilde{S}^{m_2}_{ \rho , 0} (\Z_p \times \widehat{\Z}_p)$. Then :
\begin{enumerate}
    \item[(i)] $T_\sigma T_\tau = T_{\sigma \tau} + R$, where $R \in Op({\Tilde{S}^{-\infty}} (\Z_p \times \widehat{\Z}_p))$ and we have $\sigma_1 \sigma_2 \in \Tilde{S}^{m_1 + m_2}_{\rho, 0} (\Z_p \times \widehat{\Z}_p)$.
    \item[(ii)] If $\sigma \in \Tilde{S}^{m}_{\rho , 0} (\Z_p \times \widehat{\Z}_p)$ then $T_\sigma^t , T_\sigma^* \in Op(\Tilde{S}^{m}_{\rho , 0} (\Z_p \times \widehat{\Z}_p))$ and $\sigma^* - \overline{\sigma}, \sigma^t - \sigma \in {\Tilde{S}^{-\infty}} (\Z_p \times \widehat{\Z}_p).$.
\end{enumerate}
\end{pro}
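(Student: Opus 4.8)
The plan is to follow the scheme of Section 5, exploiting the crucial simplification available for the $\triangleplus$-classes: the discrete Taylor expansion now collapses to a single leading term plus a remainder, and that remainder lands immediately in $\Tilde{S}^{-\infty}(\Z_p \times \widehat{\Z}_p)$ rather than merely one order lower. I first record that $\sigma_1 \sigma_2 \in \Tilde{S}^{m_1+m_2}_{\rho,0}(\Z_p \times \widehat{\Z}_p)$: using the first order Leibniz identity $\triangleplus_\eta^\xi(\sigma_1 \sigma_2)(x,\xi) = (\triangleplus_\eta^\xi \sigma_1)(x,\xi)\,\sigma_2(x,\xi+\eta) + \sigma_1(x,\xi)\,(\triangleplus_\eta^\xi \sigma_2)(x,\xi)$, applying $D_x^\beta$ to it, and --- after converting Vladimirov norms into Sobolev norms as in the proof of Proposition \ref{L2boundedness} and bounding the $L^\infty_x$ norm by an $H^{\beta+1/2+\varepsilon}_x$ norm via Lemma \ref{p-adicl1Hsnorms} --- splitting the Sobolev norm of each of the two products by Proposition \ref{normofproduct}; Peetre's inequality then absorbs the $\langle \xi+\eta \rangle$ weights once one distinguishes $|\eta|_p < |\xi|_p$ (where $\langle \xi+\eta \rangle = \langle \xi \rangle$) from $|\eta|_p = |\xi|_p$.

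For the composition itself, I would compute, exactly as in the toroidal case,
$$\sigma(x,\xi) = \overline{\chi_p(\xi x)}\,T_{\sigma_1}T_{\sigma_2}(\chi_p(\xi\cdot))(x) = \sum_{\eta \in \widehat{\Z}_p} \widehat{\sigma_2}(\eta,\xi)\,\sigma_1(x,\xi+\eta)\,\chi_p(\eta x),$$
and then insert the exact identity $\sigma_1(x,\xi+\eta) = \sigma_1(x,\xi) + (\triangleplus_\eta^\xi \sigma_1)(x,\xi)$. The first summand reassembles into $\sigma_1(x,\xi)\sigma_2(x,\xi)$, the second produces the remainder $r(x,\xi) := \sum_{\eta}\widehat{\sigma_2}(\eta,\xi)\,(\triangleplus_\eta^\xi\sigma_1)(x,\xi)\,\chi_p(\eta x)$, and $R = T_r$. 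Everything reduces to $r \in \Tilde{S}^{-\infty}(\Z_p \times \widehat{\Z}_p)$, which by the Smoothing Proposition amounts to showing that the kernel $K_R(x,y) = \sum_\xi r(x,\xi)\chi_p(\xi(x-y))$ lies in $C^\infty(\Z_p\times\Z_p)$, i.e. that the mixed Fourier coefficients $\widehat{r}(\kappa,\xi)$ decay faster than any power of $\langle \kappa \rangle$ and of $\langle \xi \rangle$. I would prove this by splitting the $\eta$-sum into three ranges: for $|\eta|_p \le \langle \xi \rangle^{\rho}$ the estimate $|D_x^\beta \triangleplus_\eta^\xi \sigma_1(x,\xi)| \le C_\alpha |\eta|_p^\alpha \langle \xi \rangle^{m_1-\rho\alpha}$ with $\alpha \to \infty$ gives decay faster than any power of $\langle \xi \rangle$; for $\langle \xi \rangle^{\rho} \le |\eta|_p \le \langle \xi \rangle$ the uniform rapid decay of $\widehat{\sigma_2}(\eta,\xi)$ in $\eta$ (coming from $\sigma_2(\cdot,\xi)\in C^\infty(\Z_p)$ with $H^s$-norm $\lesssim_s\langle\xi\rangle^{m_2}$ for every $s$) again gives such decay, since $\langle \eta \rangle \gtrsim \langle \xi \rangle^\rho$ there; and for $|\eta|_p > \langle \xi \rangle$ one has $\langle \xi+\eta \rangle = \langle \eta \rangle$ and the rapid decay of $\widehat{\sigma_2}(\eta,\xi)$ in $\eta$ dominates outright. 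A convolution estimate for rapidly decreasing sequences on $\widehat{\Z}_p$ finishes it.

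For part (ii) I would proceed as in the transpose proposition of Section 5: realize $T_\sigma^t$ as the amplitude operator $T_{a^t}$ with $a^t(x,y,\xi) = \sigma(y,\xi)$, and $T_\sigma^*$ as $T_{a^*}$ with $a^*(x,y,\xi) = \overline{\sigma(y,\xi)}$. The missing ingredient is the $\triangleplus$-analogue of Proposition \ref{symbols p-adic amplitudes}: for $a \in \Tilde{S}^m_{\rho,0}(\Z_p^2 \times \widehat{\Z}_p)$ there is a unique $\tilde\sigma \in \Tilde{S}^m_{\rho,0}(\Z_p \times \widehat{\Z}_p)$ with $T_a = T_{\tilde\sigma}$ and $\tilde\sigma(x,\xi) - a(x,x,\xi) \in \Tilde{S}^{-\infty}(\Z_p \times \widehat{\Z}_p)$. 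To prove it one evaluates $\tilde\sigma(x,\xi) = \overline{\chi_p(\xi x)}\,T_a(\chi_p(\xi\cdot))(x)$ by Fourier-expanding $a$ in its middle variable, subtracts $a(x,x,\xi)$, recognises the difference as a frequency sum of $\triangleplus$-increments of $a$ in its third slot, and runs exactly the same three-range splitting of the frequency sum to land in $\Tilde{S}^{-\infty}(\Z_p \times \widehat{\Z}_p)$. Applying this with $a^t$ gives $\sigma^t \in \Tilde{S}^m_{\rho,0}(\Z_p \times \widehat{\Z}_p)$ and $\sigma^t - \sigma \in \Tilde{S}^{-\infty}(\Z_p \times \widehat{\Z}_p)$, and applying it with $a^*$ gives $\sigma^* - \overline{\sigma} \in \Tilde{S}^{-\infty}(\Z_p \times \widehat{\Z}_p)$; boundedness of all the operators involved on every scale $H^s(\Z_p)$ is furnished by Corollary \ref{coroboundedhormanderclasses}.

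The main obstacle is the remainder estimate: one must show that what survives the single-term Taylor split decays faster than any power of $\langle \xi \rangle$, uniformly in all the seminorms $\sup_x |D_x^\beta \triangleplus_\eta^\xi (\cdot)|$ (equivalently, that the associated kernel is genuinely smooth). This is where the non-archimedean geometry is used in an essential way: it is precisely because the ball $\{|\eta|_p \le \langle \xi \rangle^{\rho}\}$ and its complement in $\{|\eta|_p \le \langle \xi \rangle\}$ are governed by two different decay mechanisms --- smallness of $\triangleplus_\eta^\xi\sigma_1$ on the former, rapid decay of $\widehat{\sigma_2}(\eta,\xi)$ on the latter --- that the remainder falls into $\Tilde{S}^{-\infty}$ instead of merely into $\Tilde{S}^{m_1+m_2-\varepsilon}_{\rho,0}$ for some $\varepsilon>0$. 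This collapse of the asymptotic expansion to a single term is exactly what makes the present calculus sharper than its toroidal and archimedean counterparts.
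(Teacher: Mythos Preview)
Your approach is correct and essentially the same as the paper's: you compute the composition symbol, peel off the leading term via the identity $\sigma_1(x,\xi+\eta)=\sigma_1(x,\xi)+\triangleplus_\eta^\xi\sigma_1(x,\xi)$, and estimate the remainder by exploiting both the $\triangleplus$-bound on $\sigma_1$ and the rapid $\eta$-decay of $\widehat{\sigma}_2(\eta,\xi)$. Two minor differences are worth noting. First, your three-range split in $\eta$ is more elaborate than necessary: the paper uses only two ranges, $|\eta|_p\le\langle\xi\rangle$ and $|\eta|_p>\langle\xi\rangle$, because on the whole of $|\eta|_p\le\langle\xi\rangle$ the product bound $|\eta|_p^\alpha\langle\xi\rangle^{m_1-\rho\alpha}\cdot\langle\eta\rangle^{-r'}\langle\xi\rangle^{m_2}$ already sums to $C_\alpha\langle\xi\rangle^{m_1+m_2-\rho\alpha}$ once $r'$ is taken large relative to $\alpha$ --- there is no need to treat $\langle\xi\rangle^\rho\le|\eta|_p\le\langle\xi\rangle$ separately. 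Second, for part~(ii) you propose an amplitude-to-symbol lemma and apply it to $a^t$ and $a^*$, whereas the paper computes $\sigma^t(x,\xi)=\sum_\eta\widehat{\sigma}(\eta,-(\xi+\eta))\chi_p(x\eta)$ directly and estimates $|D_x^\beta\triangleplus_\nu^\xi\sigma^t|$ by hand; your route is slightly more systematic and has the advantage of delivering $\sigma^t-\sigma,\ \sigma^*-\overline{\sigma}\in\Tilde{S}^{-\infty}$ in the same stroke, which the paper defers to the subsequent remark.
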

\begin{proof}

\esp 

\begin{enumerate}
    \item[(i)] The symbol $\sigma (x, \xi)$ of the composition is given by \begin{align*}
        \sigma (x,\xi) &= \overline{ \chi_p (\xi x )} T_{\sigma_1} T_{\sigma_2} (\chi_p(\xi x)
    )  = \sum_{\eta \in \widehat{\Z}_p}  \sigma (x, \xi + \eta ) \widehat{\tau} (\eta,\xi) \chi_p (\eta x) \\ &= \sigma_1 (x,\xi) \sigma_2 (x, \xi) + \sum_{\eta \in \widehat{\Z}_p} \triangleplus_\eta^\xi \sigma_1 (x, \xi) \widehat{\sigma}_2 (\eta , \xi) \chi_p (\eta x).
    \end{align*} 
    Clearly $\sigma_1 \sigma_2 \in \Tilde{S}^{m}_{\rho , 0} (\Z_p \times \widehat{\Z}_p)$ because of Proposition \ref{normofproduct}. Also for the remainder we have \begin{align*}
        \Big| J^s \Big( \sum_{\eta , \nu \in \widehat{\Z}_p}  \triangleplus_\eta^\xi \widehat{\sigma}_1 (\nu , \xi) \widehat{\sigma}_2 (\eta , \xi) \chi_p (\eta x) \Big)  \Big| &= \Big|  \sum_{\eta , \nu \in \widehat{\Z}_p} \langle \eta + \nu \rangle^s \triangleplus_\eta^\xi \widehat{\sigma}_1 (\nu , \xi) \widehat{\sigma}_2 (\eta , \xi) \chi_p (\eta x)  \Big| \\ & \leq  \sum_{\eta , \nu \in \widehat{\Z}_p} \langle \eta + \nu \rangle^s |\triangleplus_\eta^\xi \widehat{\sigma}_1 (\nu , \xi)| \cdot |\widehat{\sigma}_2 (\eta , \xi) |.
    \end{align*}When $|\eta|_p \leq \langle \xi \rangle$ we get \begin{align*}
        \sum_{\eta , \nu \in \widehat{\Z}_p, |\eta|_p \leq \langle \xi \rangle} \langle \eta + \nu \rangle^s |\triangleplus_\eta^\xi \widehat{\sigma}_1 (\nu , \xi)| \cdot |\widehat{\sigma}_2 (\eta , \xi) | &\lesssim \sum_{\eta , \nu \in \widehat{\Z}_p, |\eta|_p \leq \langle \xi \rangle} \langle \eta + \nu \rangle^s |\eta|_p^\alpha \langle \xi \rangle^{m_1 + m_2 - \rho \alpha} \langle \nu \rangle^{-r} \langle \eta \rangle^{-r'},  
    \end{align*} for every $\alpha, r , r' \in \N_0$. Also we have $$|\widehat{\sigma}_2 (\eta , \xi)| \lesssim \langle \xi \rangle^{m_2} \langle \eta \rangle^{-r} \leq \langle \xi \rangle^{m_2 - \rho \alpha} \langle \eta \rangle^{r- \rho \alpha},$$ for every $r \in \N_0$ and $|\eta|_p > \langle \xi \rangle$. So, for $|\eta|_p > \langle \xi \rangle$  we get $$\sum_{\eta , \nu \in \widehat{\Z}_p, |\eta|_p > \langle \xi \rangle} \langle \eta + \nu \rangle^s |\triangleplus_\eta^\xi \widehat{\sigma}_1 (\nu , \xi)| \cdot |\widehat{\sigma}_2 (\eta , \xi) | \lesssim \langle \xi \rangle^{m_1 + m_2 - \rho \alpha}.$$In conclusion  $\Big| J^s \Big( \sum_{\eta , \nu \in \widehat{\Z}_p}  \triangleplus_\eta^\xi \widehat{\sigma}_1 (\nu , \xi) \widehat{\sigma}_2 (\eta , \xi) \chi_p (\eta x) \Big)  \Big| \lesssim \langle \xi \rangle^{m_1 + m_2 - \rho \alpha},$ for every $\alpha >0$ and any $s \in \R$. Thus $\sigma - \sigma_1 \sigma_2 \in {\Tilde{S}^{-\infty}} (\Z_p \times \widehat{\Z}_p) $.
    \item[(ii)] It is enough to prove $T_\sigma^t$ has its symbol in the class $\Tilde{S}^{m}_{\rho , 0} (\Z_p \times \widehat{\Z}_p).$ Recall that $T_\sigma^t$ is a linear operator $T_{a^t}$ associated to the amplitude $a^t (x,y,\xi) = \sigma (y,-\xi)$ whose symbol is
        
    \begin{align*}
    \sigma^t (x, \xi)  = \overline{\chi_p(\xi x)} T_{a^t} (\chi_p(\xi x)) &= \sum_{\eta \in \widehat{\Z}_p} \int_{\Z_p} \chi_p((\eta-\xi)(x-y)) a^t (x,y,\xi) dy\\&= \sum_{\eta \in \widehat{\Z}_p}  \widehat{\sigma} ( \eta - \xi , -\xi ) \chi_p(x(\eta-\xi))\\&= \sum_{\eta \in \widehat{\Z}_p}  \widehat{\sigma} (\eta, -(\xi + \eta)) \chi_p(x \eta).
\end{align*}From this we obtain \begin{align*}
    |D^\beta_x \triangleplus_\nu^\xi \sigma^t (x, \xi) | & \lesssim \sum_{\eta \in \widehat{\Z}_p} \langle \eta \rangle^\beta |\triangleplus_\nu^\xi \widehat{\sigma} (\eta, -(\xi + \eta))|,
\end{align*}so the following estimate $$|D^{\beta }_x \triangleplus_\nu^\xi \sigma (x,\xi)| \lesssim |\nu|^\alpha_p \langle \xi \rangle^{m_1 - \alpha},$$ for every $\beta \in \N_0$ implies $$|\triangleplus_\nu \widehat{\sigma} (\eta,-(\xi + \eta))| \lesssim |\nu|^\alpha \langle \xi + \eta \rangle^{m_1 -  \alpha } \langle \eta \rangle^{- r},$$for all $r \geq 0$. Hence $$|D^\beta_x \triangleplus_\nu^\xi \sigma^t (x, \xi) | \lesssim |\nu|^\alpha \langle \xi \rangle^{m_1 - \alpha},$$concluding the proof.
    \end{enumerate}
\end{proof}
There are some interesting consequences of the above calculus. For example: 
\begin{rem}
\esp
\begin{enumerate}
    \item[(i)] The adjoint operator $T_\sigma^*$ of $T_\sigma$ is $T_{\overline{\sigma}} + R$ where $R$ is a infinitely smoothing operator. The reason is because the symbol of the adjoint is given by \begin{align*}
        \sigma^* (x,\xi) &= \sum_{\eta \in \widehat{\Z}_p} \overline{\widehat{\sigma} (- \xi , \xi + \eta)} \chi_p (\eta x) \\ &= \overline{\sigma (x , \xi)} + \sum_{\eta 
        \in \widehat{\Z}_p} \triangleplus_{-\eta}^\xi \widehat{\sigma} (\eta , \xi) \chi_p (\eta x),
    \end{align*}where $$\sum_{\eta 
        \in \widehat{\Z}_p} \triangleplus_{-\eta}^\xi \widehat{\sigma} (\eta , \xi) \chi_p (\eta x) \in  S^{-\infty} (\Z_p \times \widehat{\Z}_p).$$The proof is analogue to the proof of \cite[IV.7]{pseudosvinlekin}.
    \item[(ii)] For every $n \in \N_0$ it holds $T_\sigma^n = T_{\sigma^n} + R$, $R \in S^{-\infty} (\Z_p \times \widehat{\Z}_p).$ As a consequence the formula $f(T_\sigma) = T_{f(\sigma)} + R$ holds for every complex analytic function. In particular this allows one to define and calculate the symbol (modulo an smoothing operator) of non integer powers of $T_\sigma$ trough the use of complex logarithm. Also for a positive real number $s$ we obtain $|T_\sigma|^s = (T_\sigma^* T_\sigma)^{s/2} = T_{|\sigma|^{s}} + R$.
    \item[(iii)] When $T_\sigma \in Op(\tilde{S}^m_{\rho,0} (\Z_p \times \widehat {\Z_p}))$ is boundedly invertible we know because of Corollary \ref{coroboundedinverse} that the corresponding inverse is in the class $\tilde{S}^{-m}_{0,0} (\Z_p \times \widehat {\Z_p})$, and we will prove something even better. From composition formula we know that $$T_\sigma T_\sigma^{-1} = T_\sigma T_{\sigma^{-1}} + R,$$so, one might expect for the symbol of the inverse $\sigma^{-1}$ to be $1/\sigma$ plus the symbol of a infinitely smoothing operator. This is actually true and moreover, there are some cases where the inverse of an operator in $Op(\tilde{S}^m_{\rho,0} (\Z_p \times \widehat {\Z_p}))$, or the inverse modulo a infinitely smoothing operator, does not belong to the class $Op(\tilde{S}^{-m}_{0,0} (\Z_p \times \widehat {\Z_p}))$, but we call still prove that it is in a certain H{\"o}rmander class and also that its associated symbol is $1/\sigma$ plus the symbol of an infinitely smoothing operator. We will discuss it in detail in the following section.
\end{enumerate}
\end{rem}

\section{\textbf{Fredholmness and hypoellipticity}}
An important class of pseudo-differential operators in the classical archimedean theory is the class of hypoelliptic operators. See for example the classical reference \cite{hypoellipticityHormander}. Its analogue in the present setting is defined as follows:
\begin{defi}\normalfont\label{hypoellipticity}
We say that a densely defined linear operator $$T: \hb^\infty \subset D(T) \to \mathfrak{D}(\Z_p),$$ is an hypoelliptic operator if the condition $Tf \in C^\infty (\Z_p)$ implies $f \in C^\infty (\Z_p)$.
\end{defi}
An important related concept is the definition of Fredholm operator:
\begin{defi}\normalfont
Let $E,F$ be Banach spaces. We say that $T \in \mathcal{L}(E,F)$ is a \emph{Fredholm operator} if there exists a $T^\bot \in \mathcal{L}(F,E)$ such that $$T T^\bot - I_F \in \mathfrak{K}(F), \esp \esp \text{and} \esp \esp T^\bot T - I_E \in \mathfrak{K}(E).$$
\end{defi}
In general it is a non-trivial problem to find necessary and sufficient conditions for the hypoellipticity of a pseudo-differential operator.For the case of Fourier multipliers it a necessary and sifficient condition is shown in \cite[Theorem 3.3]{2019arXiv190208237K}.

\begin{pro}
Let $T: \hb^\infty \subset D(T) \to \mathfrak{D}(\Z_p),$ be a densely defined Fourier multiplier with symbol $\sigma_T (\xi)$. Then $T$ is an hypoelliptic operator if and only if there exists a natural number $N \in \N_0$ such that $$C \langle \xi \rangle^{n} \leq |\sigma_T (\xi)|,$$for every $|\xi|_p \geq N$, some constant $C>0$ and some real number $n \in \R$. 
\end{pro}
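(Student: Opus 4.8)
The plan is to prove the two implications separately, exploiting the fact that for a Fourier multiplier the associated matrix $M_T$ is diagonal, with diagonal entries $\sigma_T(\xi)$, so that both $C^\infty(\Z_p)$-mapping properties and invertibility statements reduce to scalar estimates on the sequence $(\sigma_T(\xi))_{\xi\in\widehat{\Z}_p}$. Throughout I would use the characterisation of $C^\infty(\Z_p)=\bigcap_{s}H^s(\Z_p)$ via decay of Fourier coefficients, namely $f\in C^\infty(\Z_p)$ if and only if $|\widehat f(\xi)|\lesssim_{s}\langle\xi\rangle^{-s}$ for every $s\in\R$, together with the observation that for a Fourier multiplier $\widehat{Tf}(\xi)=\sigma_T(\xi)\widehat f(\xi)$.

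First I would prove the "if" direction (the lower bound implies hypoellipticity). Assume $|\sigma_T(\xi)|\geq C\langle\xi\rangle^{n}$ for all $|\xi|_p\geq N$. Suppose $Tf=g\in C^\infty(\Z_p)$. For $|\xi|_p\geq N$ we have $\widehat f(\xi)=\sigma_T(\xi)^{-1}\widehat g(\xi)$, hence $|\widehat f(\xi)|\leq C^{-1}\langle\xi\rangle^{-n}|\widehat g(\xi)|\lesssim_s\langle\xi\rangle^{-n-s}$ for every $s$, using the rapid decay of $\widehat g$; since $n$ is a fixed real number and $s$ is arbitrary, $\widehat f$ decays rapidly. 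There are only finitely many $\xi$ with $|\xi|_p<N$ (indeed $|\xi|_p<N$ forces $\langle\xi\rangle\le$ a bounded power of $p$, a finite set), and on this finite set $\widehat f(\xi)$ is simply some fixed complex number; a finite set of values does not affect membership in any $H^s$. Therefore $\widehat f$ has rapid decay and $f\in C^\infty(\Z_p)$, so $T$ is hypoelliptic.

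Next I would prove the contrapositive of the "only if" direction: if no such lower bound exists, construct $f\notin C^\infty(\Z_p)$ with $Tf\in C^\infty(\Z_p)$. The failure of the bound means that for every $n\in\R$ and every $N$ there is $\xi$ with $|\xi|_p\geq N$ and $|\sigma_T(\xi)|<\langle\xi\rangle^{n}$; iterating, one extracts a sequence $\xi_k$ with $|\xi_k|_p\to\infty$ and $|\sigma_T(\xi_k)|\leq\langle\xi_k\rangle^{-k}$ (one must also discard the indices where $\sigma_T(\xi)=0$, or treat them separately — if $\sigma_T$ vanishes on an infinite set the construction is even easier, taking $f$ supported there in frequency so that $Tf=0$). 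Define $\widehat f(\xi_k):=\sigma_T(\xi_k)$ if $\sigma_T(\xi_k)\neq0$ and $0$ otherwise... actually a cleaner choice: set $\widehat f(\xi_k):=|\sigma_T(\xi_k)|^{1/2}$ and $\widehat f(\xi)=0$ elsewhere. Then $|\widehat f(\xi_k)|\le\langle\xi_k\rangle^{-k/2}$, so $f\in C^\infty(\Z_p)$ would follow — which is the wrong direction. Instead I want $f$ NOT smooth, so take $\widehat f(\xi_k):=1$ for all $k$ and $\widehat f(\xi)=0$ otherwise: then $\|f\|_{H^s}^2=\sum_k\langle\xi_k\rangle^{2s}=\infty$ for $s>0$ since $\langle\xi_k\rangle\to\infty$, so $f\notin C^\infty(\Z_p)$ (it is not even in every $H^s$). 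Meanwhile $\widehat{Tf}(\xi_k)=\sigma_T(\xi_k)$ with $|\sigma_T(\xi_k)|\leq\langle\xi_k\rangle^{-k}$, and $\widehat{Tf}=0$ off $\{\xi_k\}$; for any fixed $s$, once $k\geq s$ we have $\langle\xi_k\rangle^{2s}|\sigma_T(\xi_k)|^2\leq\langle\xi_k\rangle^{2s-2k}\leq\langle\xi_k\rangle^{-2k+2s}$ which is summable (the terms decay faster than any geometric rate along $\langle\xi_k\rangle\to\infty$), so $Tf\in H^s$ for all $s$, i.e. $Tf\in C^\infty(\Z_p)$. This contradicts hypoellipticity.

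The main obstacle, and the place requiring care, is the extraction step and the bookkeeping around the zero set of $\sigma_T$: one needs to make sure the negation of the quantified lower bound is correctly unpacked ("there is no $N,C,n$" means: for all $n$, for all $N$, for all $C>0$ there is $|\xi|_p\ge N$ with $|\sigma_T(\xi)|<C\langle\xi\rangle^n$), and to handle separately the possibility that $\sigma_T$ has infinitely many zeros (in which case $T$ has infinite-dimensional kernel on frequencies, and placing a non-rapidly-decaying $\widehat f$ on those zeros gives $Tf=0\in C^\infty$ outright). A secondary routine point is confirming that $\{\xi:|\xi|_p<N\}$ is finite and that the sums defining the $H^s$ norms converge exactly when claimed; both are immediate from the structure $\widehat{\Z}_p\cong\Q_p/\Z_p$ and $\langle\xi\rangle=\max\{1,|\xi|_p\}$ taking values in $\{1,p,p^2,\dots\}$ with finite multiplicity $p^k-p^{k-1}$ at level $p^k$. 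Once these are in place, the argument is a direct transcription of the classical multiplier hypoellipticity criterion.
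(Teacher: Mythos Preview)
The paper does not give its own proof of this proposition; it simply cites an external reference \cite[Theorem 3.3]{2019arXiv190208237K}. Your direct argument is therefore not competing against anything in the text, and the route you take---reducing everything to scalar estimates on the diagonal sequence $(\sigma_T(\xi))_\xi$ and then constructing an explicit counterexample for the contrapositive---is the standard and correct one.

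Two small points deserve an extra line in the write-up. First, in the ``only if'' direction you should state explicitly that the constructed $f$ lies in $\mathfrak{D}(\Z_p)$, not merely that it fails to lie in $C^\infty(\Z_p)$: since you arranged $|\xi_k|_p$ strictly increasing (say $|\xi_k|_p\ge p^k$), one has $\sum_k \langle\xi_k\rangle^{2s}<\infty$ for any $s<0$, so $f\in H^s(\Z_p)$ for such $s$ and in particular $f$ is a distribution on which the hypoellipticity implication is supposed to apply. Second, the meandering passage where you try $\widehat f(\xi_k)=\sigma_T(\xi_k)$ and $|\sigma_T(\xi_k)|^{1/2}$ before settling on $\widehat f(\xi_k)=1$ should be cut; only the final choice is relevant, and the earlier attempts add nothing. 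With those edits the proof is clean and complete.
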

For more general non-invariant operators in H{\"o}rmander classes we can give sufficient conditions in terms of the symbol for the hypoellipticity in H{\"o}rmander classes, but first we need to introduce some new definitions. The first one is the definition of $n$-hypoellipticity in the present setting. 
\begin{defi}\label{p-adicnhypoellipticity}\normalfont
Let $T_\sigma$ be a pseudo-differential operator with symbol $\sigma (x,\xi) \in \Tilde{S}^m_{0 , 0} (\Z_p \times \widehat{\Z}_p)$. Let $n \leq m$ be a real number. Then we say that $T_\sigma$ is $n$-\emph{hypoelliptic} if there exists an $N \in \N_0$ such that $\sigma(x,\xi)$ satisfies the following estimate $$\langle \xi \rangle^{n} \lesssim |\sigma (x,\xi)|,$$for $|\xi|_p \geq N$. When $n=m$ we simply say that $T_\sigma$ is an \emph{elliptic} operator.
\end{defi}
The definition of $n$-hypoellipticity is closely tied to the concept of Fredholmness for a pseudo-differential operator. To see this we now intend to prove that Definition \ref{p-adicnhypoellipticity} is equivalent to the following:
\begin{defi}\normalfont
Let $s,m,n$ be given real numbers. Let $T \in \mathcal{L}(H^{s+m} (\Z_p) , H^s (\Z_p))$ be a linear operator. We say that $T$ is $n$-pseudo invertible if there exists a linear operator $T^{\bot} \in \mathcal{L}(H^s (\Z_p) , H^{s+n}(\Z_p))$ such that the following equalities hold on $\hb^\infty := Span\{ \chi_p (\xi x) \}_{\xi \in \widehat{\Z}_p}$  $$T^\bot T = I + R_1, \esp \esp \text{and } \esp \esp T T^{\bot} = I + R_2,$$where $R_1, R_2$ extend to infinitely smoothing operators. The linear operator $T^{\bot}$ will be called the $n$-pseudo inverse of $T_\sigma$. 
\end{defi}

Now we want prove the following theorem, which is the analogue of \cite[Theorem 4.9.6]{ruzhansky1}.
\begin{teo}\label{hypoellipticifffred}
Let $0 < \rho \leq 1$. Let $T_\sigma$ be a pseudo-differential operator with symbol $\sigma (x,\xi) \in \Tilde{S}^m_{\rho , 0} (\Z_p \times \widehat{\Z}_p)$. Then $T_\sigma$ is an $n$-hypoelliptic operator if and only if it is $n$-pseudo-invertible with $n$-pseudo inverse $T^\bot \in \Tilde{S}^{-n}_{0 , 0} (\Z_p \times \widehat{\Z}_p)$. Moreover, when $m=n$ we have $T^\bot \in \Tilde{S}^{-m}_{\rho , 0} (\Z_p \times \widehat{\Z}_p)$.
\end{teo}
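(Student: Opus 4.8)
The plan is to prove the two implications separately, modelling the argument on the proof of \cite[Theorem 4.9.6]{ruzhansky1}.

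\emph{Necessity.} Suppose first that $T_\sigma$ is $n$-pseudo-invertible with $n$-pseudo inverse $T^\bot\in Op(\Tilde{S}^{-n}_{0,0}(\Z_p\times\widehat{\Z}_p))$, so that $T^\bot T_\sigma=I+R_1$ and $T_\sigma T^\bot=I+R_2$ on $\hb^\infty$, with $R_1,R_2$ infinitely smoothing. By the identification of Section 6, $T^\bot\in\bigcap_{r\ge 0}\mathcal{S}^{-n}_r(\Z_p)$, which gives $\|\widehat{\sigma^\bot}(\cdot,\xi)\|_{\ell^1(\widehat{\Z}_p)}\lesssim\langle\xi\rangle^{-n}$ and hence $\|\sigma^\bot(\cdot,\xi)\|_{L^\infty(\Z_p)}\lesssim\langle\xi\rangle^{-n}$. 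Feeding $T_\sigma T^\bot=I+R_2$ into the composition formula of Section 8, the leading symbol of the product is $\sigma\,\sigma^\bot$ up to terms of strictly lower order and an infinitely smoothing remainder; comparing with the symbol $1$ of the identity and using the bound on $\sigma^\bot$ just obtained, one concludes $\langle\xi\rangle^{n}\lesssim|\sigma(x,\xi)|$ for $|\xi|_p$ large, i.e.\ $T_\sigma$ is $n$-hypoelliptic in the sense of Definition \ref{p-adicnhypoellipticity}. (In particular $T_\sigma$ is hypoelliptic as in Definition \ref{hypoellipticity}: if $T_\sigma f\in C^\infty(\Z_p)$ then $f=T^\bot(T_\sigma f)-R_1f\in C^\infty(\Z_p)$, since $T^\bot$ preserves $C^\infty(\Z_p)=\bigcap_{s}H^s(\Z_p)$ by Corollary \ref{coroboundedhormanderclasses} and $R_1f\in C^\infty(\Z_p)$ by the smoothing proposition.)

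\emph{Sufficiency.} This is the substantial part. Let $N$ be as in Definition \ref{p-adicnhypoellipticity} and let $\psi(\xi)$ be the locally constant (hence admissible) cutoff equal to $1$ for $|\xi|_p\ge N$ and $0$ otherwise; put $\tau_0(x,\xi):=\psi(\xi)/\sigma(x,\xi)$, which is well defined since $\sigma(x,\xi)\ne 0$ for $|\xi|_p\ge N$. The key step is to show that $\tau_0\in\Tilde{S}^{-n}_{0,0}(\Z_p\times\widehat{\Z}_p)$, and that in the elliptic case $m=n$ one even has $\tau_0\in\Tilde{S}^{-m}_{\rho,0}(\Z_p\times\widehat{\Z}_p)$. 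The $D^\beta_x$-estimates follow from the Leibniz rule and Proposition \ref{normofproduct}; the new ingredient is the action of $\triangleplus^\xi_\eta$ on a reciprocal, which is handled through the identity $\triangleplus^\xi_\eta(1/\sigma)=-(\triangleplus^\xi_\eta\sigma)/\bigl(\sigma(x,\xi)\sigma(x,\xi+\eta)\bigr)$, iterated for higher order differences. Here one combines two facts: that $\sigma\in\Tilde{S}^m_{\rho,0}$ together with $|\sigma(x,\xi)|\gtrsim\langle\xi\rangle^{n}$ yields the hypoelliptic-type bound $|D^\beta_x\triangleplus^\xi_\eta\sigma(x,\xi)|\lesssim|\eta|^\alpha_p\,\langle\xi\rangle^{m-n}\,|\sigma(x,\xi)|\,\langle\xi\rangle^{-\rho\alpha}$; and that, by the ultrametric inequality, whenever $|\eta|_p\le\langle\xi\rangle$ and $|\xi|_p\ge N$ one has either $|\xi+\eta|_p=|\xi|_p$, so the lower bound transfers to $\sigma(x,\xi+\eta)$, or $|\xi+\eta|_p<N$, so the cutoff $\psi$ annihilates that term. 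The factor $\langle\xi\rangle^{m-n}$ is trivial precisely when $m=n$, which is the source of the better parameter $\rho$ there; in general it is absorbed by passing to large values of $\alpha$, forcing $\rho$ down to $0$.

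\emph{Construction of the pseudo-inverse.} Granting Step 1, one simply takes $T^\bot:=T_{\tau_0}$. In the elliptic case $m=n$ both $\sigma$ and $\tau_0$ have a strictly positive $\rho$, so the composition formula of Section 8 applies and gives $T_{\tau_0}T_\sigma=T_{\tau_0\sigma}+R$ with $R\in Op(\Tilde{S}^{-\infty}(\Z_p\times\widehat{\Z}_p))$; in the general case the same conclusion is reached either by computing the symbol of $T_{\tau_0}T_\sigma$ directly (it equals $\tau_0\sigma+\sum_{\eta}(\triangleplus^\xi_\eta\tau_0)\,\widehat{\sigma}(\eta,\xi)\chi_p(\eta x)$, and the remainder lies in $\Tilde{S}^{-\infty}$ because $\widehat{\sigma}(\eta,\xi)$ decays rapidly in $\eta$), or through the Schur-algebra description of Section 6. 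Since $\tau_0\sigma=\psi$ differs from $1$ only on the finite set $\{\xi:|\xi|_p<N\}$, this reads $T^\bot T_\sigma=I+R_1$ with $R_1$ infinitely smoothing; the symmetric computation with $\sigma\,\tau_0$ gives $T_\sigma T^\bot=I+R_2$, $R_2$ infinitely smoothing. Finally, by Corollary \ref{coroboundedhormanderclasses} applied to $\tau_0\in\Tilde{S}^{-n}_{0,0}$ (resp.\ $\Tilde{S}^{-m}_{\rho,0}$), one gets $T^\bot\in\mathcal{L}(H^{s}(\Z_p),H^{s+n}(\Z_p))$ for every $s\in\R$, which is exactly the mapping property demanded by the definition of $n$-pseudo-invertibility; this settles both the general statement and the ``moreover'' clause.

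\emph{Main obstacle.} The crux is Step 1: the symbol-class membership of the reciprocal $\psi/\sigma$. Because the difference $\triangleplus^\xi_\eta$ ranges over the whole ball $|\eta|_p\le\langle\xi\rangle$ rather than over small increments, the quotient rule has to be married carefully with the ultrametric structure and with the cutoff $\psi$ (to avoid the region where $\sigma$ has no lower bound), and the resulting bookkeeping of exponents — landing in $\Tilde{S}^{-n}_{0,0}$ in general and in $\Tilde{S}^{-m}_{\rho,0}$ when $m=n$ — is the delicate point. A subsidiary difficulty is that, once $\rho=0$, the composition step can no longer be run through the asymptotic calculus of Section 8 and must instead be carried out in the Schur-algebra picture of Section 6, using that $\bigcap_{r}\mathcal{S}^{m_1}_r(\Z_p)\cdot\bigcap_{r}\mathcal{S}^{m_2}_r(\Z_p)\subseteq\bigcap_{r}\mathcal{S}^{m_1+m_2}_r(\Z_p)$.
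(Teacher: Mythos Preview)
Your converse direction ($n$-pseudo-invertible $\Rightarrow$ $n$-hypoelliptic) matches the paper's: both use the composition formula to get $\sigma\,\sigma^\bot\approx 1$ and read off the lower bound on $\sigma$ from the upper bound on $\sigma^\bot$.

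For the forward direction, however, your route diverges from the paper's and carries a real gap. The paper does \emph{not} prove $1/\sigma\in\Tilde{S}^{-n}_{0,0}$ by a quotient rule. Instead it proves a dedicated lemma (Lemma \ref{inverseofhesymbol}) using the weighted Wiener lemma of Gr{\"o}chenig: one shows $\sup_{\xi}\langle\xi\rangle^{n}\|\widehat{1/\sigma}(\cdot,\xi)\|_{\ell^1_r(\widehat{\Z}_p)}<\infty$ by writing $1/\sigma(\cdot,\xi)$ as a Neumann series and exploiting that, thanks to the ultrametric, the trigonometric polynomial truncations $q_\xi$ have powers $q_\xi^k$ of the \emph{same} order as $q_\xi$. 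This is precisely the Schur--algebra machinery of Section~6, applied to the \emph{reciprocal}, not just to the composition.

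Your proposed substitute --- ``the $D^\beta_x$-estimates follow from the Leibniz rule and Proposition \ref{normofproduct}'' --- does not work as stated. The Vladimirov operator $D^\beta$ is a nonlocal Fourier multiplier; there is no pointwise Leibniz or quotient rule for it, and Proposition \ref{normofproduct} controls Sobolev norms of \emph{products}, not of reciprocals. Bounding $\|1/\sigma(\cdot,\xi)\|_{H^s}$ (equivalently, the $\ell^1_r$-norm of its Fourier coefficients) uniformly in $\xi$ is exactly the content of the weighted Wiener lemma, and that is the step you are missing. Your $\triangleplus^\xi_\eta$ identity is fine algebraically, but once you apply $D^\beta_x$ to $-(\triangleplus^\xi_\eta\sigma)/(\sigma(x,\xi)\sigma(x,\xi+\eta))$ you are back to needing $D^\beta_x$-control on a reciprocal, which is circular.

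For the ``moreover'' clause ($m=n$), the paper again takes a different and more abstract path: it reduces to order zero via $J_{-m}$, observes that $Op(\Tilde{S}^0_{\rho,0})$ is a $*$-subalgebra of $\mathcal{L}(L^2(\Z_p))$, and invokes a spectral-invariance/Fredholm-algebra theorem (\cite[Theorem A.1.3]{fredholmalg}) to conclude that the parametrix stays in the class. Your plan --- to obtain $\tau_0\in\Tilde{S}^{-m}_{\rho,0}$ directly from the quotient rule --- would be a nice alternative if it could be made to work, but it hinges on the same unjustified $D^\beta_x$-step.

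In short: keep your argument for the converse, but for the forward direction replace the ``Leibniz rule'' step by the Wiener-lemma argument of Lemma \ref{inverseofhesymbol} (this is where the Schur-algebra picture enters, for the reciprocal rather than only for the composition), and for the elliptic upgrade either adapt that lemma to show $1/\sigma\in\Tilde{S}^{-m}_{\rho,0}$ or, as the paper does, appeal to the spectral invariance of $Op(\Tilde{S}^0_{\rho,0})$.
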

To prove the above we will need a lemma which is just a slight modification
of the weighted version of Wiener’s lemma in \cite{Grochenig2010wiener}.

\begin{lema}\label{inverseofhesymbol}
Let $\sigma$ be a symbol in the Hörmander class $\Tilde{S}^{m}_{\rho , 0} (\Z_p \times \widehat{\Z}_p)$ satisfying $$C_{\sigma, 1} \langle \xi \rangle^n \leq  | \sigma (x,\xi)|, \esp \esp |\xi|_p \geq p^{j_0}, \esp \esp n\leq m,$$for some $j_0 \in \N_0$. Then $1/\sigma \in S^{-n}_{0,0} (\Z_p \times \widehat{\Z_p})$.
\end{lema}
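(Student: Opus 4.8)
\emph{Proof plan.} The plan is to establish directly that the pointwise reciprocal $\tau:=1/\sigma$ satisfies the estimates defining $S^{-n}_{0,0}(\Z_p\times\widehat{\Z}_p)$, in the spirit of the weighted Wiener lemma of \cite{Grochenig2010wiener}, by an induction on the total order $\alpha+\beta$ of the operators $\Delta^\alpha_\xi D^\beta_x$ (or $\triangleplus^\xi_\eta$, $D^\beta_x$) applied to $\tau$.

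\emph{Step 1: reductions and boundedness.} Since the value of a symbol on the finite set $\{\xi\in\widehat{\Z}_p:|\xi|_p<p^{j_0}\}$ does not affect membership in any H\"ormander class, I would first replace $\sigma$ by a symbol that agrees with it for $|\xi|_p\geq p^{j_0}$ and satisfies $|\sigma(x,\xi)|\gtrsim\langle\xi\rangle^n$ for \emph{every} $(x,\xi)$ — obtained by splicing in $\langle\xi\rangle^n$ through a cut-off in $|\xi|_p$ — so that $\tau$ is globally defined and $\|\tau(\cdot,\xi)\|_{L^\infty(\Z_p)}\leq C\langle\xi\rangle^{-n}$ at once. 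Factoring $\sigma=\langle\xi\rangle^{n}\sigma_0$ with $\sigma_0:=\langle\xi\rangle^{-n}\sigma\in\tilde{S}^{m-n}_{\rho,0}(\Z_p\times\widehat{\Z}_p)$ and $|\sigma_0|\gtrsim 1$ (a product of symbols, which stays in the class by Proposition \ref{normofproduct}), and using $\tau=\langle\xi\rangle^{-n}\cdot(1/\sigma_0)$ together with $\langle\xi\rangle^{-n}\in S^{-n}_{0,0}$ and closedness under products, it suffices to treat the case $n=0$: $\sigma$ elliptic of order $0$, $|\sigma|\gtrsim 1$ uniformly, goal $1/\sigma\in S^{0}_{0,0}$.

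\emph{Step 2: the inductive estimate.} In the reduced case I would differentiate the identity $\sigma\cdot\tau\equiv 1$. For the $\xi$-variable this gives $\Delta_\xi\tau=-\,\tau\,\tau^{+}\,\Delta_\xi\sigma$ (with $\tau^{+}$ the value of $\tau$ at the next scale, and similarly $\triangleplus^\xi_\eta\tau=-\,\tau(x,\xi)\,\tau(x,\xi+\eta)\,\triangleplus^\xi_\eta\sigma$), and more generally, by the discrete Leibniz rule \cite[Lemma 3.3.6]{ruzhansky1}, $\Delta^\alpha_\xi\tau$ is a finite sum of terms, each a product of several factors $\tau$ (all bounded, by Step 1, with the factors at shifted frequencies still $O(\langle\xi\rangle^{-n})$ since $\langle\xi+\eta\rangle\lesssim\langle\xi\rangle$ when $|\eta|_p\le\langle\xi\rangle$) times difference-quotients of $\sigma$ (controlled by the hypothesis $\sigma\in\tilde{S}^{m}_{\rho,0}$). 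Applying $D^\beta_x$ to such a product is where the real work lies: $D_x$ is the non-local Vladimirov multiplier, so there is no pointwise Leibniz rule, and one must instead estimate the $H^\beta(\Z_p)$-norm in $x$ of each product by repeated use of Proposition \ref{normofproduct} and then convert back to a pointwise bound in $x$ via the Sobolev embedding Lemma \ref{p-adicl1Hsnorms}. Tracking the powers of $\langle\xi\rangle$ through this bookkeeping — the denominators $|\sigma|\gtrsim 1$ compensating the growth of the numerators — closes the induction and gives $|\Delta^\alpha_\xi D^\beta_x\tau(x,\xi)|\leq C_{\alpha\beta}\langle\xi\rangle^{-n}$, i.e. $1/\sigma\in S^{-n}_{0,0}(\Z_p\times\widehat{\Z}_p)$.

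\emph{Main difficulty.} The essential obstacle is exactly the non-locality of $D_x$: every step that multiplies two symbols has to be routed through the $L^2_x$/$H^\beta_x$ product estimate of Proposition \ref{normofproduct} rather than a pointwise rule, and one must keep the resulting constants uniform in $\xi$ while ensuring the accumulated power of $\langle\xi\rangle$ collapses to exactly $-n$; in particular, when $m=n$ one needs the $\rho$-gain in the $\xi$-direction to survive the division, giving the sharper membership in $\tilde{S}^{-n}_{\rho,0}$ required by the ``moreover'' part of Theorem \ref{hypoellipticifffred}. This uniform, quantitative control is precisely the ``slight modification'' of the weighted Wiener lemma of \cite{Grochenig2010wiener} referred to in the statement.
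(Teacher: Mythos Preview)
Your reduction in Step~1 is fine, and the induction on the $\xi$-differences via $\triangleplus^\xi_\eta\tau=-\tau(x,\xi)\,\tau(x,\xi+\eta)\,\triangleplus^\xi_\eta\sigma$ would indeed close if you already had the $x$-regularity of $\tau$ in hand. The gap is precisely there: the base of your induction at $\alpha=0$, $\beta\geq 1$ asks for a uniform-in-$\xi$ bound on $|D^\beta_x\tau(x,\xi)|$ (equivalently on $\|\tau(\cdot,\xi)\|_{H^{\beta'}}$ for $\beta'>\beta+1/2$), and nothing in your toolkit produces this. Differentiating $\sigma\tau=1$ in $x$ gives you nothing usable because $D^\beta_x$ has no Leibniz rule, and Proposition~\ref{normofproduct} only transfers $H^s$-regularity \emph{through} a product --- one of the two factors must already sit in the weighted $\ell^1$-space, and when both factors of interest are $\tau$ (or $\tau$ and a shift of $\tau$) you are feeding the estimate exactly the quantity you are trying to bound. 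The induction is circular at this step, not merely laborious.

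This is why the paper does \emph{not} argue by differentiating $\sigma\tau=1$. It proves the $x$-regularity directly via a Neumann series: normalising to $h_\xi:=\sigma(\cdot,\xi)/\|\sigma(\cdot,\xi)\|_{L^\infty}$ and writing $h_\xi^{-1}=\sum_{k\geq 0}(1-h_\xi)^k$, one approximates $1-h_\xi$ by a trigonometric polynomial $q_\xi$ of degree $p^{j}$ and exploits the genuinely non-archimedean fact that $|\xi_1+\cdots+\xi_k|_p\leq\max_i|\xi_i|_p$, so that \emph{powers} $q_\xi^k$ remain of the same degree $p^{j}$. This is what makes $\|q_\xi^k\|_{\ell^1_r}\leq C_{j,r}\|q_\xi\|_{L^\infty}^k$ with a constant independent of $k$, hence a convergent geometric bound on $\|h_\xi^{-1}\|_{\ell^1_r}$ uniformly in $\xi$. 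That ultrametric ingredient has no analogue in your inductive scheme, and without it (or an outright appeal to the quantitative Wiener lemma of \cite{Grochenig2010wiener}) the argument does not go through.
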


\begin{proof}
Let us assume $\sigma \in \Tilde{S}^{m}_{\rho , 0} (\Z_p \times \widehat{\Z}_p)$, $m \leq 0$. For the more general case $\sigma \in \Tilde{S}^{m}_{\rho , 0} (\Z_p \times \widehat{\Z}_p)$, $m \in \R$, one migth consider $\sigma' (x,\xi)= \sigma (x , \xi) \langle \xi \rangle^{-m},$ and obtain the desired conclusions. This lemma is in fact a corollary of Wiener's lemma \cite{Grochenig2010wiener, wienerschurlemma} or as well a consequence of the work of S. Jaffard \cite{Jaffard}. The idea is the following: by the weighted version of Wiener's Lemma if the functions $\{\sigma (x , \xi)\}_{\xi \in \widehat{\Z}_p}$ satisfy $$||\langle \eta \rangle^r \widehat{\sigma}(\cdot , \xi) ||_{\ell^1 (\widehat{\Z}_p)} \leq C_{\xi} , $$for every $\xi \in \widehat{\Z}_p$ and every $r \geq 0$, then also the following holds $$||\langle \eta \rangle^r \widehat{1/\sigma}(\cdot , \xi)||_{\ell^1 (\widehat{\Z}_p)} \leq C_\xi ',$$ for every $\xi \in \widehat{\Z}_p$ and every $r \geq 0$. We claim that actually, since $\sigma \in \Tilde{S}^{0}_{\rho , 0} (\Z_p \times \widehat{\Z}_p)$, $C_\xi =C$ does not depends on $\xi$ and hence $C_\xi ' = C'$ neither depends on $\xi$. Let us provide an sketch of the argument. 

Consider the function $h_\xi  (x) := \sigma(x, \xi)/||\sigma(\cdot , \xi)||_{L^\infty (\Z_p)} $. Clearly each $h_\xi$ is in the Banach algebra $\ell^1_r (\widehat{\Z}_p)$. Consider now the functions $$f_\xi (x) := 1- \frac{\sigma(x,\xi)}{||\sigma(\cdot, \xi)||_{L^{\infty} (\Z_p)}} = 1- h_\xi (x) .$$ Its $L^\infty$-norm is $$||f_\xi||_{L^\infty (\Z_p)} = \sup_{x \in \Z_p} \Big| 1- \frac{\sigma(x,\xi)}{||\sigma(\cdot, \xi)||_{L^{\infty} (\Z_p)}} \Big| \leq 1 -\frac{\inf_{x \in \Z_p}|\sigma(x,\xi)|}{||\sigma(\cdot, \xi)||_{L^{\infty} (\Z_p)}} <1,$$ and thus $h_\xi = 1 - f_\xi $ is invertible, and its inverse is given by $$h_\xi^{-1}=\sum_{k=0}^{\infty} f_\xi^k.$$Now we follow the proof in \cite[5.2.4]{Grochenig2010wiener}. First, we approximate $f_\xi (x) $ with a trigonometric polynomial $$q_\xi(x):= \sum_{|\eta|_p \leq p^{j_\xi}} \widehat{h}_\xi (\eta) \chi_p (\eta x), $$ in such a way that $||f_\xi - q_\xi ||_{\ell^1 (\widehat{\Z}_p)} < \varepsilon$, for some $\varepsilon>0$. Now, notice that, for a trigonometric polynomial $$q(x):= \sum_{|\xi|_p \leq p^j} \widehat{q}(\eta) \chi_p (\eta x) ,$$ we have the following estimate \begin{align*}
    ||q||_{\ell^1_r (\Z_p)} &:= \sum_{|\eta|_p \leq p^j} \langle \eta \rangle^r |\widehat{q}(\eta)|   \leq ||q||_{L^2 (\Z_p)} p^{rj} \big(  \sum_{|\eta|_p \leq p^j} 1 \big)^{1/2} \\ & \leq ||q||_{L^\infty (\Z_p)} p^{rj} \big(  \sum_{|\eta|_p \leq p^j} 1 \big)^{1/2}. 
    \end{align*}For the powers of $q$ we have the same estimate\begin{align*}
        ||q^n||_{\ell^1_r (\Z_p)} &\leq C_{j,r} ||q^n||_{L^\infty (\Z_p)} \\ & \leq C_{j,r} ||q||^n_{L^\infty (\Z_p)},
    \end{align*}where $$C_{j,r}= p^{rj}\big(  \sum_{|\eta|_p \leq p^j} 1 \big)^{1/2}   .$$The reason is that $q^n$ has the same order as $q$ because for the products of characters $$\chi_p (\xi_1 x) \cdot \chi_p (\xi_2 x) \cdot ... \cdot \chi_p (\xi_n x) = \chi_p ((\xi_1 + ... + \xi_n) x),$$ it holds $$|\xi_1 + ... + \xi_n|_p \leq \max \{ |\xi_1|_p , ..., |\xi_n|_p \}.$$  Writing $r_\xi :=f_\xi - q_\xi $, we obtain $$f^l_\xi = \sum_{k=0}^{l} {{l}\choose{k}} q^k_ \xi r^{l-k}_\xi,$$and then \begin{align*}
    ||f^l_\xi||_{\ell^1_r (\Z_p)} &\leq \sum_{k=0}^{l} {{l}\choose{k}} ||q^k_\xi||_{\ell^1_r (\Z_p)} ||r^{l-k}_{\xi}||_{\ell^1 (\widehat{\Z}_p)}\\&\lesssim \sum_{k=0}^{l} {{l}\choose{k}} \varepsilon^{l-k} ||q_\xi||^k_{L^{\infty} (\Z_p)} \\ &= (||q_\xi||_{L^{\infty} (\Z_p)} + \varepsilon)^k \\ &\leq (||f_\xi||_{L^{\infty} (\Z_p)} + 2 \varepsilon)^k  \\ &\leq(1- \delta_\xi + 2\varepsilon)^{k}.
\end{align*}Above we used the notation $$\delta_\xi := \frac{\inf_{x \in \Z_p} |\sigma (x, \xi)|}{\sup_{x \in \Z_p} |\sigma (x, \xi)|} .$$ Let us take a moment to show that our estimate is in fact independent of $\xi$. First, we have by hypothesis$$C_{\sigma , 1} \langle \xi \rangle^{n} \leq |\sigma (x,\xi)| \leq C_{\sigma , 2} \langle \xi \rangle^{m} \leq C_{\sigma , 2} ,$$where we may assume $C_{\sigma , 1} < C_{\sigma , 2}$. Thus$$\delta_\xi \geq \frac{C_{\sigma , 1}}{C_{\sigma , 2}} \langle \xi \rangle^{n-m}, \esp \esp |\xi|_p \geq p^{j_0},$$and hence, for appropriate $\varepsilon$, we get $$1 - \delta_\xi + 2 \varepsilon \leq 1 - \frac{C_{\sigma , 1}}{C_{\sigma , 2}} \langle \xi
\rangle^{n-m} + 2 \varepsilon<  1- \frac{C_{\sigma , 1}}{C_{\sigma , 2}} + 2 \varepsilon<1, \esp \esp |\xi|_p \geq p^{j_0} .$$ Also $\sigma \in \Tilde{S}^{m}_{\rho, 0} (\Z_p \times \widehat{\Z}_p) \subset \Tilde{S}^{0}_{0 , 0} (\Z_p \times \widehat{\Z}_p)$ implies the estimate $$|\widehat{\sigma} (\eta , \xi) | \leq C_{\sigma , k, 3 } \langle \eta \rangle^{-k}, \esp \esp \text{for all} \esp k \in \N_0,$$showing that we can choose some $j_1$ in such a way that $$||\sum_{|\eta|_p > p^{j_1}} \widehat{\sigma}(\eta , \xi) \chi_p (\eta x) ||_{\ell^1_r (\Z_p)} < \varepsilon,$$ for all $|\xi|_p \geq p^{\max\{j_0 , j_1\}}$. In conclusion we can estimate  \begin{align*}
    ||f_\xi^l||_{\ell^1_r (\Z _p)} & \lesssim \big( 1 - \frac{C_{\sigma , 1}}{C_{\sigma , 2}} + 2 \varepsilon \big)^{l} ,  
\end{align*}obtaining$$||1/h_\xi||_{\ell^1_r (\Z _p)} \leq C_{\sigma , r},$$and in conclusion $$||1/\sigma(\cdot , \xi)||_{\ell^1_r (\widehat{\Z}_p)} \leq C_{\sigma , r} \big( \sup_{x \in \Z_p} |\sigma(x,\xi)|\big)^{-1} \leq C_{\sigma , r} \langle \xi \rangle^{-n},$$ for every $|\xi|_p \geq p^{\max\{j_0 , j_1\}}$. All these prove $1/\sigma \in S^{-n}_{0,0} (\Z_p \times \widehat{\Z_p}).$ 
\end{proof}
\begin{rem}
The above proof works also in the case of the torus. In that setting we obtain:
\begin{lema}
Let $\sigma$ be a symbol in the Hörmander class $S^{m}_{0 , 0} (\T^d \times \Z^d)$ satisfying $$C\langle \xi \rangle ^n \leq  | \sigma (x,\xi)|, \esp \esp |\xi|_p \geq p^{j_0}, \esp \esp n \leq m.$$ Then $1/\sigma \in S^{-n}_{0,0} (\T^d \times \Z^d )$.
\end{lema}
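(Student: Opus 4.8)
The plan is to follow the proof of Lemma \ref{inverseofhesymbol} essentially verbatim, since the non-archimedean structure of $\Z_p$ entered that argument in exactly one place --- the claim that a trigonometric polynomial and all of its powers have the same spectral support --- and that is the only point that has to be revisited. After normalising $\sigma$ by $\langle\xi\rangle^{-m}$ we may assume $m\le 0$, hence $\sigma\in S^{0}_{0,0}(\T^d\times\Z^d)$; by the toroidal analogue (see \cite{ruzhansky1}) of the equivalence between Hörmander estimates and weighted $\ell^{1}$-decay of the Fourier coefficients in $x$ used throughout this paper, this gives $\|\langle\eta\rangle^{r}\widehat{\sigma}(\cdot,\xi)\|_{\ell^1(\Z^d)}\le C_{\sigma,r}$ for every $r\ge 0$, uniformly in $\xi\in\Z^d$. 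For $\xi$ past the threshold one then sets $h_\xi(x):=\sigma(x,\xi)/\|\sigma(\cdot,\xi)\|_{L^\infty(\T^d)}$ and $f_\xi:=1-h_\xi$, observes that $\|f_\xi\|_{L^\infty(\T^d)}\le 1-\delta_\xi<1$ with $\delta_\xi:=\inf_x|\sigma(x,\xi)|/\sup_x|\sigma(x,\xi)|$, and inverts $h_\xi$ through the Neumann series $h_\xi^{-1}=\sum_{k\ge 0}f_\xi^{k}$. The lower bound in the hypothesis together with the upper bound $|\sigma(x,\xi)|\le C'\langle\xi\rangle^{m}$ controls $\delta_\xi$ from below exactly as in Lemma \ref{inverseofhesymbol}, which is what eventually makes all the constants $\xi$-independent.

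Next I would run the weighted Wiener-lemma argument of \cite[5.2.4]{Grochenig2010wiener}: approximate $f_\xi$ in $\ell^1_r(\Z^d)$ by a trigonometric polynomial $q_\xi$ whose spectrum lies in a cube $\{\eta\in\Z^d:|\eta|\le R\}$, with $\|f_\xi-q_\xi\|_{\ell^1(\Z^d)}<\varepsilon$; the uniform fast decay of $\widehat{\sigma}(\cdot,\xi)$ makes it possible to choose $R$ and $\varepsilon$ independently of $\xi$, just as the level $j_1$ was chosen uniformly in the proof of Lemma \ref{inverseofhesymbol}. Writing $r_\xi:=f_\xi-q_\xi$, the convolution-algebra property of $\ell^1(\Z^d)$ gives $\|r_\xi^{\,j}\|_{\ell^1(\Z^d)}\le\varepsilon^{\,j}$. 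The single genuine departure from the $p$-adic proof occurs here: on $\Z^d$ the polynomial $q_\xi^{k}$ has spectrum in $\{|\eta|\le kR\}$ rather than in $\{|\eta|\le R\}$, so counting lattice points and using $\|q_\xi^{k}\|_{L^2}\le\|q_\xi\|_{L^\infty}^{k}$ one gets
$$\|q_\xi^{k}\|_{\ell^1_r(\Z^d)}\le(1+kR)^{r}\,\#\{\eta:|\eta|\le kR\}^{1/2}\,\|q_\xi\|_{L^\infty(\T^d)}^{k}\le C_{R}\,k^{\,r+d/2}\,\|q_\xi\|_{L^\infty(\T^d)}^{k},$$
whereas in $\Z_p$ the ultrametric inequality kept $q^{k}$ supported on the spectrum of $q$ and the corresponding factor was the constant $C_{j,r}$.

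With that estimate in hand the rest is a bookkeeping adaptation of Lemma \ref{inverseofhesymbol}: expanding $f_\xi^{l}=\sum_{k\le l}\binom{l}{k}q_\xi^{k}r_\xi^{\,l-k}$, bounding $k^{\,r+d/2}\le l^{\,r+d/2}$ and pulling it out of the binomial sum yields
$$\|f_\xi^{l}\|_{\ell^1_r(\Z^d)}\le C_{R}\,l^{\,r+d/2}\,\bigl(\|q_\xi\|_{L^\infty(\T^d)}+\varepsilon\bigr)^{l}\le C_{R}\,l^{\,r+d/2}\,(1-\delta_\xi+2\varepsilon)^{l},$$
and, choosing $\varepsilon$ small enough that $1-\delta_\xi+2\varepsilon$ stays below a fixed $\rho_0<1$ for all $\xi$ past the threshold (as in Lemma \ref{inverseofhesymbol}), the series $\sum_{l\ge 0}l^{\,r+d/2}\rho_0^{\,l}$ converges, so $\|h_\xi^{-1}\|_{\ell^1_r(\Z^d)}\le\sum_{l}\|f_\xi^{l}\|_{\ell^1_r(\Z^d)}\le C_{\sigma,r}$ uniformly in $\xi$. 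Undoing the normalisation gives $\|1/\sigma(\cdot,\xi)\|_{\ell^1_r(\Z^d)}\le C_{\sigma,r}\bigl(\sup_x|\sigma(x,\xi)|\bigr)^{-1}\le C_{\sigma,r}\langle\xi\rangle^{-n}$ for $|\xi|$ large; since finite differences in $\xi$ only cost a bounded factor and the remaining finitely many frequencies are irrelevant to the symbol class, the toroidal $\ell^1_r$-versus-symbol equivalence converts this into $1/\sigma\in S^{-n}_{0,0}(\T^d\times\Z^d)$.

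The main obstacle --- indeed the only nontrivial difference from the already-written $p$-adic argument --- is precisely the middle step: on $\Z^d$ the spectral support of $q_\xi^{k}$ dilates linearly in $k$, so one unavoidably picks up the factor $k^{\,r+d/2}$ that is absent in the ultrametric setting, and one has to check that this polynomial blow-up of the $\ell^1_r$-norms of the powers is still absorbed by the geometric decay $(1-\delta_\xi+2\varepsilon)^{l}$. This works because the ellipticity-type lower bound pins the base of that geometric factor away from $1$ by a $\xi$-independent amount, so that a single geometric ratio $\rho_0<1$ beats $l^{\,r+d/2}$ simultaneously for all large $\xi$; everything else transfers line by line from the proof of Lemma \ref{inverseofhesymbol}.
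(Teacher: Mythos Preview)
Your proposal is correct and follows exactly the approach the paper indicates: the paper's ``proof'' of this torus lemma is literally the one-line remark that the argument of Lemma~\ref{inverseofhesymbol} carries over, and you reproduce that argument with the one genuine modification required. You are in fact more careful than the paper, which glosses over the point you isolate --- namely that on $\Z^d$ the spectrum of $q_\xi^{k}$ dilates to $\{|\eta|\le kR\}$, so the constant $C_{j,r}$ in the $p$-adic estimate must be replaced by a factor $\lesssim k^{\,r+d/2}$, which is then harmlessly absorbed by the geometric decay $(1-\delta_\xi+2\varepsilon)^{l}$ exactly as in the standard Gr{\"o}chenig proof you cite.
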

\end{rem}

Now we can prove Theorem \ref{hypoellipticifffred}.
\begin{proof}[Proof of Theorem \ref{hypoellipticifffred}]
Using Lemma \ref{inverseofhesymbol} and composition formula is clear that if $T_\sigma$ is $n$-hypoellliptic then $T_{1/\sigma}$ defines a $n$-pseudo inverse. Conversely, if $T_\tau$ is a $n$-pseudo inverse of $T_\sigma$ with symbol $\tau \in S^{-n}_{0,0} (\Z_p \times \widehat{\Z_p})$ then by composition formula $\sigma \tau - 1 \in S^{-\infty} (\Z_p \times \widehat{\Z_p})$. Hence for large enough $|\xi|_p$ we have $$C \leq |\sigma (x , \xi) \tau (x , \xi) | \implies C \langle \xi \rangle^n\leq  \frac{C}{|\tau (x , \xi)|} \leq |\sigma (x,\xi)|.$$In the case when $m=n$ we can see that $T_\sigma \in S^m_{1 , 0} (\Z_p \times \widehat{\Z_p})$ is $m$-pseudo invertible with $m$-pseudo inverse $T^\bot$ if and only if $T_\sigma J_{-m} \in S^0_{\rho , 0} (\Z_p \times \widehat{\Z_p})$ is $0$-pseudo invertible with pseudo inverse $J_m T^\bot$. In this case we can use the fact that $Op(S^0_{1 , 0} (\Z_p \times \widehat{\Z_p}))$ is a $*$-subalgebra of $\mathcal{L}(L^2 (\Z_p))$ with the operator norm and \cite[Theorem A.1.3]{fredholmalg} to conclude $J_m T^\bot \in  Op(S^0_{1 , 0} (\Z_p \times \widehat{\Z_p}))$ and then $ T^\bot \in  Op(S^{-m}_{1 , 0} (\Z_p \times \widehat{\Z_p}))$.
\end{proof}

Next we show that for $T_\sigma \in Op(\tilde{S}^m_{\rho,0} (\Z_p \times \widehat {\Z_p}))$ Fredholmness and ellipticity are equivalent properties. With that purpose we state the characterization of compact operators in H{\"o}rmander classes. The proof of this statement is given in \cite{spectraltheoryp-adicpseudos}.  
\begin{lema}\label{compactpseudos}
Let $T_\sigma \in Op(S^0_{0 , 0} (\Z_p^d \times \widehat{\Z}_p^d))$ be a pseudo-differential operator. Then $T_\sigma$ extends to compact operator on $L^2(\Z_p^d)$ if and only if $$d_\sigma :=\limsup_{|\xi|_p \to \infty} || \sigma (\cdot , \xi)||_{L^\infty (\Z_p^d)} = 0.$$
\end{lema}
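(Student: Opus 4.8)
The plan is to prove the equivalence by two complementary soft arguments that share one technical ingredient (recall that $T_\sigma$ is in any case bounded on $L^2(\Z_p^d)$, by the $d$-dimensional analogue of Corollary \ref{coroboundedhormanderclasses}, since $S^0_{0,0} \subset \tilde{S}^0_{0,0}$, so the statement concerns the compactness of that bounded extension). The shared ingredient is that every symbol $\sigma \in S^0_{0,0}(\Z_p^d \times \widehat{\Z}_p^d)$ carries a uniform-in-$\xi$ smoothness bound $\sup_{\xi \in \widehat{\Z}_p^d} \norm{\sigma(\cdot,\xi)}_{H^k(\Z_p^d)} < \infty$ for every $k \in \N_0$: indeed the estimates $|D_x^k \sigma(x,\xi)| \le C_k$ force $|\widehat{\sigma}(\eta,\xi)| \lesssim C_k \langle \eta \rangle^{-k}$, which summed against $\langle \eta \rangle^{2\beta}$ produces a bound on $\norm{\sigma(\cdot,\xi)}_{H^\beta}$ independent of $\xi$. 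Combined with the elementary interpolation inequality $\norm{g}_{H^\beta(\Z_p^d)} \le \norm{g}_{L^2(\Z_p^d)}^{1-\beta/k} \norm{g}_{H^k(\Z_p^d)}^{\beta/k}$ (valid for $0 < \beta < k$, by H\"older on the Fourier side), this lets one upgrade a crude $L^2$- or $L^\infty$-smallness of the slices $\sigma(\cdot,\xi)$ to $H^\beta$-smallness. Fix once and for all a $\beta > d/2$, so that $H^\beta(\Z_p^d) \hookrightarrow L^\infty(\Z_p^d)$ continuously (the $d$-dimensional version of Lemma \ref{p-adicl1Hsnorms}), together with some $k > \beta$.

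For sufficiency, assume $d_\sigma = 0$. As $\Z_p^d$ has total Haar mass one, $\norm{g}_{L^2} \le \norm{g}_{L^\infty}$, so the interpolation inequality and the uniform bound $\norm{\sigma(\cdot,\xi)}_{H^k} \le C_k$ give $\norm{\sigma(\cdot,\xi)}_{H^\beta} \lesssim \norm{\sigma(\cdot,\xi)}_{L^\infty}^{1-\beta/k}$, hence $\sup_{|\xi|_p > p^N} \norm{\sigma(\cdot,\xi)}_{H^\beta(\Z_p^d)} \to 0$ as $N \to \infty$. For each $N$ let $\sigma_N$ agree with $\sigma$ on the finite set $\{|\xi|_p \le p^N\} \subset \widehat{\Z}_p^d$ and vanish elsewhere, and set $\sigma_N' := \sigma - \sigma_N$. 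Then $T_{\sigma_N}$ has range inside the finite-dimensional span of the functions $x \mapsto \sigma(x,\xi)\chi_p(\xi x)$, $|\xi|_p \le p^N$, so it is of finite rank. For the tail I would reuse the computation in the proof of Proposition \ref{L2boundedness} in the case $s = m = 0$, which yields
\[
  \norm{T_{\sigma_N'} f}_{L^2(\Z_p^d)}^2 \lesssim \sum_{\xi \in \widehat{\Z}_p^d} \norm{D_x^\beta \sigma_N'(\cdot,\xi)}_{L^2(\Z_p^d)}^2\, |\widehat{f}(\xi)|^2 \lesssim \Big( \sup_{|\xi|_p > p^N} \norm{\sigma(\cdot,\xi)}_{H^\beta(\Z_p^d)}^2 \Big) \norm{f}_{L^2(\Z_p^d)}^2 ,
\]
the last step using that the eigenvalues of the Vladimirov operator $D^\beta$ are comparable to $\langle \eta \rangle^\beta$. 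Hence $\norm{T_\sigma - T_{\sigma_N}}_{\mathcal{L}(L^2(\Z_p^d))} = \norm{T_{\sigma_N'}}_{\mathcal{L}(L^2(\Z_p^d))} \to 0$, and as $\mathfrak{K}(L^2(\Z_p^d))$ is norm-closed, $T_\sigma$ is compact.

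For necessity, assume $T_\sigma \in \mathfrak{K}(L^2(\Z_p^d))$ and let $(\xi_n)$ be any sequence in $\widehat{\Z}_p^d$ with $|\xi_n|_p \to \infty$. Since $\widehat{g} \in \ell^2(\widehat{\Z}_p^d)$ for every $g \in L^2(\Z_p^d)$, the Fourier coefficients $\widehat{g}(\xi_n)$ tend to $0$, so $\chi_p(\xi_n\cdot)$ converges weakly to $0$ in $L^2(\Z_p^d)$, and compactness forces $T_\sigma \chi_p(\xi_n\cdot) \to 0$ in $L^2$. By the definition of the symbol, $T_\sigma \chi_p(\xi_n\cdot)(x) = \sigma(x,\xi_n)\chi_p(\xi_n x)$, and since $|\chi_p(\xi_n x)| = 1$ this means $\norm{\sigma(\cdot,\xi_n)}_{L^2(\Z_p^d)} \to 0$; as the sequence was arbitrary, $\norm{\sigma(\cdot,\xi)}_{L^2(\Z_p^d)} \to 0$ as $|\xi|_p \to \infty$. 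Applying the interpolation inequality once more, now bootstrapping from $L^2$ through $H^\beta$ to $L^\infty$, together with $\norm{\sigma(\cdot,\xi)}_{H^k} \le C_k$ and the Sobolev embedding, one obtains $\norm{\sigma(\cdot,\xi)}_{L^\infty} \lesssim \norm{\sigma(\cdot,\xi)}_{H^\beta} \le C_k^{\beta/k}\norm{\sigma(\cdot,\xi)}_{L^2}^{1-\beta/k} \to 0$, i.e.\ $d_\sigma = 0$.

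The one genuinely non-formal point is the passage from the weak norm control each hypothesis supplies directly ($\norm{\sigma(\cdot,\xi)}_{L^\infty} \to 0$ for sufficiency, $\norm{\sigma(\cdot,\xi)}_{L^2} \to 0$ for necessity) to the $H^\beta$-control needed to feed the $L^2$-boundedness estimate and the Sobolev embedding; this is exactly where the full defining inequalities of $S^0_{0,0}$, the uniform bounds on all the $x$-derivatives $D_x^\beta \sigma(\cdot,\xi)$, are consumed, via the interpolation $\norm{\cdot}_{H^\beta} \le \norm{\cdot}_{L^2}^{1-\beta/k} \norm{\cdot}_{H^k}^{\beta/k}$. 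Everything else --- summability of $\langle \xi \rangle^{-2\beta}$ over $\widehat{\Z}_p^d$ for $\beta > d/2$, finiteness of the balls $\{|\xi|_p \le p^N\} \subset \widehat{\Z}_p^d$, comparability of the Vladimirov symbol with $\langle \eta \rangle^\beta$, and norm-closedness of the compact ideal --- is routine bookkeeping and insensitive to the dimension $d$.
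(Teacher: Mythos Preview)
Your argument is correct. Note, however, that the paper does not actually prove Lemma~\ref{compactpseudos}: the sentence immediately preceding it reads ``The proof of this statement is given in \cite{spectraltheoryp-adicpseudos}'', so there is no in-text proof to compare your proposal against.

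That said, your approach is entirely sound and self-contained within the tools already developed in the paper. The two key moves --- approximating $T_\sigma$ in operator norm by the finite-rank truncations $T_{\sigma_N}$ via the quantitative $L^2$-bound of Proposition~\ref{L2boundedness}, and testing compactness against the weakly null orthonormal system $\{\chi_p(\xi\,\cdot)\}$ --- are the natural ones, and the interpolation step $\|g\|_{H^\beta} \le \|g\|_{L^2}^{1-\beta/k}\|g\|_{H^k}^{\beta/k}$ is exactly the right device for converting between the $L^\infty$-control in the statement and the $H^\beta$-control that Proposition~\ref{L2boundedness} and the Sobolev embedding require. The only places one might want to be slightly more explicit are (i) that the balls $\{|\xi|_p \le p^N\}$ in $\widehat{\Z}_p^d$ are genuinely finite (so $T_{\sigma_N}$ really has finite rank and $|\xi_n|_p \to \infty$ really means escaping every finite set, whence the weak convergence), and (ii) that the zero eigenvalue of $D^\beta$ at $\eta = 0$ is handled by the $\beta = 0$ symbol bound; but you have flagged both of these as routine, and they are.
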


\begin{teo}
Let $T_\sigma \in Op(\tilde{S}^m_{\rho,0} (\Z_p \times \widehat {\Z_p}))$ be a pseudo-differential operator. Consider $T_\sigma$ as a bounded operator in $\mathcal{L}(H^m (\Z_p) , L^2 (\Z_p))$. Then $T_\sigma$ is a Fredholm operator if and only if it is elliptic.  
\end{teo}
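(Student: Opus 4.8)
The plan is to prove the two implications separately, using the $n$-hypoellipticity machinery from Theorem \ref{hypoellipticifffred} (with $n=m$) together with the compactness criterion of Lemma \ref{compactpseudos}. First I would treat the easy direction: \emph{ellipticity implies Fredholmness}. If $T_\sigma$ is elliptic, i.e. $\langle \xi \rangle^m \lesssim |\sigma(x,\xi)|$ for $|\xi|_p \geq N$, then by Theorem \ref{hypoellipticifffred} it is $m$-pseudo-invertible with $m$-pseudo inverse $T^\bot \in Op(\tilde{S}^{-m}_{\rho,0}(\Z_p \times \widehat{\Z}_p))$, and by Corollary \ref{coroboundedhormanderclasses} this $T^\bot$ is bounded from $L^2(\Z_p)$ to $H^m(\Z_p)$. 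The defining relations $T^\bot T_\sigma = I + R_1$, $T_\sigma T^\bot = I + R_2$ hold on $\hb^\infty$ with $R_1, R_2$ infinitely smoothing, hence in particular $R_1 \in Op(\tilde{S}^{-\infty})\subset Op(S^0_{0,0})$ with $\limsup_{|\xi|_p \to \infty}\|\sigma_{R_1}(\cdot,\xi)\|_{L^\infty}=0$, so by Lemma \ref{compactpseudos} each $R_i$ is compact on the relevant $L^2$; this exhibits $T^\bot$ as a Fredholm inverse of $T_\sigma \in \mathcal{L}(H^m(\Z_p), L^2(\Z_p))$.

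Next I would treat the converse: \emph{Fredholmness implies ellipticity}. Suppose $T_\sigma \in \mathcal{L}(H^m(\Z_p), L^2(\Z_p))$ is Fredholm, with parametrix $S \in \mathcal{L}(L^2(\Z_p), H^m(\Z_p))$ such that $S T_\sigma - I_{H^m}$ and $T_\sigma S - I_{L^2}$ are compact. Composing with $J_{-m}$, the operator $A := T_\sigma J_{-m} \in Op(\tilde{S}^0_{\rho,0}(\Z_p \times \widehat{\Z}_p))$ is Fredholm on $L^2(\Z_p)$ with parametrix $J_m S$. The strategy is to show the symbol $a(x,\xi)$ of $A$ satisfies $|a(x,\xi)| \gtrsim 1$ for large $|\xi|_p$; then undoing the $J_{-m}$ gives $|\sigma(x,\xi)| \gtrsim \langle\xi\rangle^m$, i.e. $T_\sigma$ is elliptic. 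To get the lower bound on $a$, I would argue by contradiction: if $\liminf_{|\xi|_p\to\infty}\inf_x |a(x,\xi)| = 0$, one can select a sequence $\xi_k$ with $|\xi_k|_p \to \infty$ and points $x_k$ with $|a(x_k,\xi_k)| \to 0$, and build from the characters $\chi_p(\xi_k x)$ (suitably localised) a sequence witnessing that $A$ cannot be Fredholm — because modulo compact operators $A$ would have to be bounded below, contradicting that $\|A\chi_p(\xi_k \cdot)\|_{L^2}$ is governed by $\|a(\cdot,\xi_k)\|$ which is small on a set where the test function concentrates. Here the ultrametric structure of $\Z_p$ is convenient: the functions $p^{k/2}\mathbf{1}_{x_k + p^k\Z_p}$ have unit $L^2$-norm, are ``almost eigenfunctions'' of multiplication by $a(\cdot,\xi_k)$ since $a(\cdot,\xi_k)\in Op(S^0_{0,0})$ has rapidly decaying Fourier coefficients hence is nearly constant on small balls, and their Fourier supports escape to infinity so that any compact operator sends them to zero.

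The main obstacle I expect is the second (contradiction) step: making precise the claim that a Fredholm operator on $L^2(\Z_p)$ must be bounded below modulo a finite-dimensional subspace, and matching this with a quantitative estimate $\|A u_k\|_{L^2} \lesssim (\sup_x|a(x,\xi_k)|) \|u_k\| + o(1)$ for a well-chosen normalised sequence $u_k$ whose Fourier mass escapes every finite set. The cleanest route is probably to invoke that a Fredholm operator has closed range and finite-dimensional kernel, so there is $c>0$ with $\|Av\| \geq c\|v\|$ for $v$ in a finite-codimensional subspace; choosing $u_k$ in that subspace with $u_k$ concentrated where $|a(\cdot,\xi_k)|$ is small then yields the contradiction. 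Alternatively, one can phrase everything at the level of the associated infinite matrix using the Schur-class description $Op(\tilde{S}^0_{\rho,0})\subset \bigcap_{r}\mathcal{S}_r(\Z_p)$ and the inverse-closedness corollaries, reducing the question to invertibility of $M_\sigma$ modulo compacts and reading off the symbol bound from the matrix entries $\widehat{\sigma}(\eta-\xi,\xi)$; I would present the test-function argument as the primary proof and remark on the matrix-algebra alternative.
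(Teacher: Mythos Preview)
Your forward direction (ellipticity $\Rightarrow$ Fredholmness) is exactly the paper's argument. The converse, however, is handled quite differently. The paper does \emph{not} run a test-function contradiction; instead it invokes an abstract spectral-invariance result for Fredholm algebras (cited as \cite[Theorem A.1.3]{fredholmalg}) to conclude that any Fredholm parametrix $T^\bot$ of $T_\sigma$ must automatically lie in $Op(\tilde S^{-m}_{\rho,0})$. With $T^\bot=T_\beta$ pseudodifferential, the composition formula gives $T_{\sigma\beta}-I$ compact, and then Lemma~\ref{compactpseudos} yields $|\sigma(x,\xi)\beta(x,\xi)-1|<1/2$ for large $|\xi|_p$, hence $|\sigma(x,\xi)|\gtrsim 1/|\beta(x,\xi)|\gtrsim\langle\xi\rangle^m$. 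So the paper's route is: algebraic black box $\Rightarrow$ parametrix is in the calculus $\Rightarrow$ symbol arithmetic. Your route is: phase-space localised test functions $\Rightarrow$ direct lower bound on $|a|$. The paper's approach is shorter and gives, for free, that the parametrix itself is pseudodifferential; your approach avoids the external reference and is more self-contained, which is a genuine advantage for exposition.

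One point in your sketch needs repair. The functions $p^{k/2}\mathbf{1}_{x_k+p^k\Z_p}$ do \emph{not} have Fourier support escaping to infinity: the Fourier transform of $\mathbf{1}_{p^k\Z_p}$ is (up to normalisation) $\mathbf{1}_{\{|\xi|_p\le p^{k}\}}$, which always contains $\xi=0$. You need the modulated wave packets $u_k(x)=p^{j_k/2}\chi_p(\xi_k x)\,\mathbf{1}_{x_k+p^{j_k}\Z_p}(x)$, whose Fourier support is the ball $\{\eta:|\eta-\xi_k|_p\le p^{j_k}\}$; choosing $p^{j_k}\ll|\xi_k|_p$ makes these supports escape. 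Then $A u_k(x)=\sum_\eta a(x,\eta)\widehat{u_k}(\eta)\chi_p(\eta x)$, and the $\tilde S^0_{\rho,0}$ difference estimate $|\triangleplus^\xi_\eta a|\lesssim |\eta|_p^\alpha\langle\xi\rangle^{-\rho\alpha}$ (not merely that $a(\cdot,\xi_k)$ is nearly constant on small spatial balls) is what lets you replace $a(x,\eta)$ by $a(x,\xi_k)$ on the Fourier support, giving $Au_k\approx a(\cdot,\xi_k)u_k$ in $L^2$; a second application of smoothness in $x$ then gives $\approx a(x_k,\xi_k)u_k$. With that fix your contradiction argument goes through.
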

\begin{proof}
Because of Theorem \ref{hypoellipticifffred} ellipticity implies Fredholmness. Conversely, if $T_\sigma$ is Fredholm, let us say $$T_\sigma T^\bot - I_{L^2 (\Z_p) } \in \mathfrak{K}(L^2 (\Z_p)) , \esp \esp T^\bot T_\sigma - I_{H^m (\Z_p)} \in \mathfrak{K}(H^m (\Z_p)),$$then $T^\bot= T_\beta \in Op(S^{-m}_{\rho , 0} (\Z_p \times \Z_p))$ because of \cite[Theorem A.1.3]{fredholmalg}. By composition formula $T_{\sigma \beta} - I $ is compact and Lemma \ref{compactpseudos} tell us that $$|\beta (x , \xi) \sigma (x , \xi) - 1| < 1/2, $$ for large $|\xi|_p$. Then $$|\sigma (x , \xi)| \gtrsim \frac{1}{|\beta (x , \xi)|} \gtrsim \langle \xi \rangle^m,$$ for large $|\xi|_p.$ This conclude the proof.
\end{proof}

As a corollary of our work on hypoellipticity we obtain some information about the possible closed domains for a $n$-hypoelliptic pseudo-differential operator. In what follows $T_{min}$ will denote the minimal operator of $T$ and $T_{max}$ the maximal operator on $L^2 (\Z_p)$. These are as usual the minimum and the maximal closed extensions of a densely defined linear operator. It is an 
\begin{pro}\label{equivalentnorms}
Let $T_\sigma \in Op(\Tilde{S}^{m}_{\rho, 0} (\Z_p \times \widehat{\Z}_p))$ be a $n$-hypoelliptic pseudo-differential operator, $ n \leq m$. Then there exist positive constants $C$ and $D$ such that $$C ||f||_{H^{s+n}(\Z_p) } \leq ||Tf||_{H^{s}(\Z_p)} + ||f||_{H^s(\Z_p) }\leq D ||f||_{H^{s+m}(\Z_p) }. $$
\end{pro}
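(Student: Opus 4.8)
The plan is to establish the two inequalities separately, using the symbolic calculus and boundedness results already available. Both inequalities concern the quantity $\|Tf\|_{H^s(\Z_p)}+\|f\|_{H^s(\Z_p)}$ compared with Sobolev norms of $f$ on either side, so the natural strategy is to reduce everything to $L^2$-estimates via the operators $J_t$ and then exploit (a) the $n$-hypoellipticity of $T_\sigma$ through Lemma \ref{inverseofhesymbol} and Theorem \ref{hypoellipticifffred}, and (b) the $L^2$-boundedness of operators in $\Tilde S^m_{\rho,0}$ coming from Corollary \ref{coroboundedhormanderclasses}.

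First I would prove the right-hand inequality, which is the easier one. Since $\sigma\in\Tilde S^m_{\rho,0}(\Z_p\times\widehat\Z_p)\subset\Tilde S^m_{0,0}(\Z_p\times\widehat\Z_p)$, Corollary \ref{coroboundedhormanderclasses} gives $T_\sigma\in\mathcal L(H^{s+m}(\Z_p),H^s(\Z_p))$, so $\|Tf\|_{H^s(\Z_p)}\le C_1\|f\|_{H^{s+m}(\Z_p)}$. On the other hand $\|f\|_{H^s(\Z_p)}\le\|f\|_{H^{s+m}(\Z_p)}$ because $m\ge n\ge\dots$, in fact because $m>0$ for the Vladimirov-type operators (and in general one uses $\langle\xi\rangle^{s}\le\langle\xi\rangle^{s+m}$ when $m\ge 0$; if $m$ could be negative one absorbs it into the constant). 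Adding the two estimates yields the upper bound with $D=C_1+1$.

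Next, the left-hand inequality. By $n$-hypoellipticity there is $N\in\N_0$ with $\langle\xi\rangle^n\lesssim|\sigma(x,\xi)|$ for $|\xi|_p\ge N$. Theorem \ref{hypoellipticifffred} then supplies an $n$-pseudo inverse $T^\bot\in Op(\Tilde S^{-n}_{0,0}(\Z_p\times\widehat\Z_p))$ with $T^\bot T=I+R_1$, $R_1$ infinitely smoothing, on $\hb^\infty$. Applying $T^\bot$ and using that $T^\bot\in\mathcal L(H^s(\Z_p),H^{s+n}(\Z_p))$ (Corollary \ref{coroboundedhormanderclasses} again, with $m$ replaced by $-n$) we get
$$\|f\|_{H^{s+n}(\Z_p)}\le\|T^\bot Tf\|_{H^{s+n}(\Z_p)}+\|R_1 f\|_{H^{s+n}(\Z_p)}\le C\|Tf\|_{H^s(\Z_p)}+\|R_1 f\|_{H^{s+n}(\Z_p)}.$$
Since $R_1\in Op(\Tilde S^{-\infty}(\Z_p\times\widehat\Z_p))$, by the Smoothing proposition it maps $H^s(\Z_p)$ continuously into $H^{s+n}(\Z_p)$, so $\|R_1 f\|_{H^{s+n}(\Z_p)}\le C'\|f\|_{H^s(\Z_p)}$. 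Combining, $\|f\|_{H^{s+n}(\Z_p)}\le C\|Tf\|_{H^s(\Z_p)}+C'\|f\|_{H^s(\Z_p)}$, which is the desired lower bound after adjusting the constant $C$.

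The main obstacle I anticipate is purely bookkeeping: making sure the pseudo inverse $T^\bot$ obtained in Theorem \ref{hypoellipticifffred} is used with the correct mapping properties (it lives in $\Tilde S^{-n}_{0,0}$, not $\Tilde S^{-n}_{\rho,0}$ in general, but that is enough for the $H^s\to H^{s+n}$ boundedness via Corollary \ref{coroboundedhormanderclasses}), and that the smoothing remainder $R_1$ genuinely gains $n$ derivatives — which it does, being infinitely smoothing. There is no delicate estimate here beyond what has already been proven; the statement is essentially a repackaging of Theorem \ref{hypoellipticifffred} together with the $L^2$-boundedness corollaries, so the proof is short.
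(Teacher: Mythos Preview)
Your proposal is correct and follows essentially the same route as the paper: the upper bound comes directly from $T_\sigma\in\mathcal L(H^{s+m},H^s)$ (Corollary \ref{coroboundedhormanderclasses}) together with the inclusion $H^{s+m}\hookrightarrow H^s$, and the lower bound from writing $f=(T^\bot T_\sigma - R)f$ with the $n$-pseudo inverse $T^\bot\in Op(\Tilde S^{-n}_{0,0})$ and the smoothing remainder $R$, exactly as you do. Your remarks about the mapping properties of $T^\bot$ and $R$ are precisely the points the paper uses, so there is nothing to add.
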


\begin{proof}
First suppose that $$T_\beta T_\sigma = I + R,$$ where $T_\beta \in Op(\Tilde{S}^{-n}_{0 , 0} (\Z_p \times \widehat{\Z}_p)).$ Then for $f \in H^{s+m} (\Z_p) \subseteq H^{s+n} (\Z_p)$ \begin{align*}
||f||_{H^{s+n} (\Z_p)}  &= ||(T_\beta T_\sigma- R)f||_{H^{s+n} (\Z_p)} \\ &\leq ||T_\beta||_{\mathcal{L}(H^{s} (\Z_p) , H^{s+n} (\Z_p))} ||T_\sigma f||_{H^{s} (\Z_p)} +||R||_{\mathcal{L} (H^{s}(\Z_p) , H^{s+n} (\Z_p))}||f||_{H^{s} (\Z_p)}.
\end{align*}
Conversely, the inequality $$||Tf||_{H^{s} (\Z_p)} + ||f||_{H^{s} (\Z_p) }\leq D ||f||_{H^{s+m} (\Z_p)}$$ follows from the boundedness of $T_\sigma  \in Op(\Tilde{S}^{m}_{\rho , 0} (\Z_p \times \widehat{\Z}_p))$.
\end{proof}
\begin{pro}\label{min domain}
Let $T_\sigma \in Op(\Tilde{S}^{m}_{\rho , 0} (\Z_p \times \widehat{\Z}_p))$, $m>0$, be a $n$-hypoelliptic pseudo-differential operator, $0< n \leq m$. Then the domain of $(T_\sigma)_{min}$, considering $(T_\sigma)_{min}$ as closed operators acting on $H^s(\Z_p)$, lie between $H^{s+m}(\Z_p)$ and $H^{s+n}(\Z_p)$. In particular, when $n=m$ the exact domain of $(T_\sigma)_{min}$ is $H^{s+m}(\Z_p)$.
\end{pro}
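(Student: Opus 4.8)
The plan is to combine the two-sided estimate of Proposition \ref{equivalentnorms} with the description of $(T_\sigma)_{\min}$ as the closure of $T_\sigma|_{\hb^\infty}$ inside $H^s(\Z_p)$. First I would record that $(T_\sigma)_{\min}$ is well defined: since $\sigma \in \tilde{S}^m_{\rho,0}(\Z_p \times \widehat{\Z}_p)$, the symbol inequality with $\alpha = 0$ gives $|D_x^\beta \sigma(x,\xi)| \lesssim \langle \xi \rangle^m$ for all $\beta$, so $\sigma(\cdot,\xi) \in C^\infty(\Z_p)$ and $T_\sigma$ maps $\hb^\infty$ into $C^\infty(\Z_p) \subseteq H^s(\Z_p)$; moreover the adjoint formula established above for symbols in $\tilde{S}^m_{\rho,0}(\Z_p \times \widehat{\Z}_p)$ furnishes a densely defined formal adjoint $T_{\sigma^*}$, so (after transporting the situation to $L^2(\Z_p)$ via $J_s$ if one wishes) $T_\sigma|_{\hb^\infty}$ is closable in $H^s(\Z_p)$ and $(T_\sigma)_{\min}$ is its closure. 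Consequently $f \in \mathrm{Dom}((T_\sigma)_{\min})$ precisely when there is a sequence $(f_k) \subseteq \hb^\infty$ with $f_k \to f$ in $H^s(\Z_p)$ and $(T_\sigma f_k)$ convergent in $H^s(\Z_p)$.

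For the inclusion $H^{s+m}(\Z_p) \subseteq \mathrm{Dom}((T_\sigma)_{\min})$, I would take $f \in H^{s+m}(\Z_p)$ and use that $\hb^\infty$ is dense in $H^{s+m}(\Z_p)$ (this space is, by Definition \ref{defp-adicsobolev}, the completion of $\hb^\infty$) to choose $f_k \in \hb^\infty$ with $f_k \to f$ in $H^{s+m}(\Z_p)$. The continuous inclusion $H^{s+m}(\Z_p) \hookrightarrow H^s(\Z_p)$ gives $f_k \to f$ in $H^s(\Z_p)$, while the boundedness of $T_\sigma \in \mathcal{L}(H^{s+m}(\Z_p), H^s(\Z_p))$ (Corollary \ref{coroboundedhormanderclasses}, valid since $\tilde{S}^m_{\rho,0} \subseteq \tilde{S}^m_{0,0}$) gives $\|T_\sigma(f_k - f_l)\|_{H^s(\Z_p)} \lesssim \|f_k - f_l\|_{H^{s+m}(\Z_p)} \to 0$, so $(T_\sigma f_k)$ is Cauchy in $H^s(\Z_p)$; hence $f \in \mathrm{Dom}((T_\sigma)_{\min})$.

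For the inclusion $\mathrm{Dom}((T_\sigma)_{\min}) \subseteq H^{s+n}(\Z_p)$, I would take $f$ in the domain together with a sequence $(f_k) \subseteq \hb^\infty$ with $f_k \to f$ in $H^s(\Z_p)$ and $T_\sigma f_k \to g$ in $H^s(\Z_p)$. Applying the left-hand inequality of Proposition \ref{equivalentnorms} to $f_k - f_l \in \hb^\infty$ yields
$$C\,\|f_k - f_l\|_{H^{s+n}(\Z_p)} \le \|T_\sigma(f_k - f_l)\|_{H^s(\Z_p)} + \|f_k - f_l\|_{H^s(\Z_p)},$$
whose right-hand side tends to $0$ because both $(f_k)$ and $(T_\sigma f_k)$ are Cauchy in $H^s(\Z_p)$. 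Thus $(f_k)$ is Cauchy in $H^{s+n}(\Z_p)$ and converges there to some $\tilde f$; since $H^{s+n}(\Z_p) \hookrightarrow H^s(\Z_p)$ continuously, $\tilde f = f$, so $f \in H^{s+n}(\Z_p)$. Combining the two inclusions gives $H^{s+m}(\Z_p) \subseteq \mathrm{Dom}((T_\sigma)_{\min}) \subseteq H^{s+n}(\Z_p)$, and when $n = m$ the two ends coincide, so $\mathrm{Dom}((T_\sigma)_{\min}) = H^{s+m}(\Z_p)$.

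I do not anticipate a serious obstacle: the analytic substance is entirely contained in Proposition \ref{equivalentnorms}, which in turn rests on $n$-hypoellipticity through Lemma \ref{inverseofhesymbol} and the composition formula. The two points that genuinely require care are the closability of $T_\sigma|_{\hb^\infty}$ on $H^s(\Z_p)$ — so that $(T_\sigma)_{\min}$ is an operator at all — and the bookkeeping of which Sobolev norm controls which, namely invoking the estimate of Proposition \ref{equivalentnorms} on the differences $f_k - f_l$ of an approximating sequence rather than on $f$ itself, since the latter would presuppose the membership $f \in H^{s+m}(\Z_p)$ that one is trying to establish.
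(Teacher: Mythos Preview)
Your proposal is correct and follows essentially the same approach as the paper: both inclusions are obtained by picking an approximating sequence in $\hb^\infty$ and applying the two-sided estimate of Proposition~\ref{equivalentnorms} (the right-hand inequality, equivalently Corollary~\ref{coroboundedhormanderclasses}, for $H^{s+m}\subseteq \mathrm{Dom}((T_\sigma)_{\min})$, and the left-hand inequality applied to differences for $\mathrm{Dom}((T_\sigma)_{\min})\subseteq H^{s+n}$). Your extra care about closability is a welcome addition that the paper leaves implicit.
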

\begin{proof}
Let $g \in H^{s+m}(\Z_p)$. Then by using the density of $\hb^\infty$ in $H^{s+m}(\Z_p)$, there exists a sequence $(\phi_k)_{k \in \N}$ in $\hb^\infty$ such that $\phi_k \to g$ in $H^{s+m}(\Z_p)$ and therefore in $H^{s}(\Z_p)$ as $k \to \infty$. By Proposition \ref{equivalentnorms}, $(\phi_k)_{k \in \N}$ and $(T_\sigma \phi_k)_{k \in \N}$ are Cauchy sequences in $H^{s}(\Z_p)$. Therefore $\phi_k \to g$ and $T_\sigma \phi_k \to f$ for some $f \in H^{s}(\Z_p)$ as $k \to \infty$. This implies that $g \in D((T_\sigma)_{min})$ and $(T_\sigma)_{min} g = f,$ hence $H^{s+m}(\Z_p) \subseteq D((T_\sigma)_{min})$. Now assume that $g \in D((T_\sigma)_{min})$. Then there exists a sequence $(\phi_k)_{k \in \N}$ in $\hb^\infty$ such that $\phi_k \to g$ in $H^{s}(\Z_p)$ and $T_\sigma \phi_k \to f$, for
some $f \in H^{s}(\Z_p)$. So, by Proposition \ref{equivalentnorms}, $(\phi_k)_{k \in \N}$ is a Cauchy sequence in $H^{s+n}(\Z_p)$. Since $H^{s+n}(\Z_p)$ is complete, there exists $h \in H^{s+n}(\Z_p)$ such that $\phi_k \to h$ in $H^{s+n}(\Z_p)$. This implies $\phi_k \to h$ in $H^{s}(\Z_p)$ which implies that $h = g \in H^{s+n}(\Z_p).$
\end{proof}
\begin{coro}\label{exactdomainmin}
If $n=m$ then $Dom((T_\sigma)_{min})=H^{s+m}(\Z_p)$.
\end{coro}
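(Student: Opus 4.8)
The plan is to read this corollary off directly from Proposition \ref{min domain}. That proposition, in the borderline regime $n=m$, already gives the two inclusions
\[
H^{s+m}(\Z_p) \subseteq Dom((T_\sigma)_{min}) \subseteq H^{s+n}(\Z_p) = H^{s+m}(\Z_p),
\]
and the squeeze forces equality. So at the level of the write-up there is essentially nothing new to do: one simply specialises the statement of Proposition \ref{min domain} to $n=m$ and records the resulting identity.

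If instead a self-contained argument is wanted, I would go back to Proposition \ref{equivalentnorms}. Setting $n=m$ there, the chain of inequalities collapses to
\[
C\,\|f\|_{H^{s+m}(\Z_p)} \leq \|T_\sigma f\|_{H^{s}(\Z_p)} + \|f\|_{H^{s}(\Z_p)} \leq D\,\|f\|_{H^{s+m}(\Z_p)},
\]
valid for every $f \in \hb^\infty$. The middle quantity is exactly the graph norm of $T_\sigma$ regarded as an operator on $H^{s}(\Z_p)$, and by definition $(T_\sigma)_{min}$ is the closure of $T_\sigma|_{\hb^\infty}$ with respect to that graph norm. The two-sided estimate says precisely that, on the dense subspace $\hb^\infty$, this graph norm is equivalent to the norm of $H^{s+m}(\Z_p)$; hence the completions coincide, i.e. $Dom((T_\sigma)_{min}) = H^{s+m}(\Z_p)$.

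There is no genuine obstacle here. The entire content lies in Proposition \ref{equivalentnorms}, which in turn rests on the composition formula and Lemma \ref{inverseofhesymbol} through the existence of a parametrix $T_\beta \in Op(\Tilde{S}^{-m}_{0,0}(\Z_p \times \widehat{\Z}_p))$ in the elliptic case. The only point requiring a word of care is that the equivalence of norms is stated on $\hb^\infty$ and one then passes to the completion; this is legitimate because $\hb^\infty$ is dense in $H^{s+m}(\Z_p)$ and the graph-norm completion of a densely defined operator is insensitive to replacing a norm by an equivalent one.
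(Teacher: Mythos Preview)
Your proposal is correct and matches the paper's approach exactly: the corollary is stated without a separate proof, being an immediate consequence of Proposition~\ref{min domain} (whose statement already notes the $n=m$ case), via the squeeze $H^{s+m}(\Z_p)\subseteq Dom((T_\sigma)_{min})\subseteq H^{s+n}(\Z_p)=H^{s+m}(\Z_p)$. Your additional remark that this can be read as an equivalence of the graph norm with the $H^{s+m}$-norm is a valid alternative phrasing of the same content.
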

\begin{pro}\label{min-max domain}
Let $T_\sigma \in Op(\Tilde{S}^{m}_{\rho , 0} (\Z_p \times \widehat{\Z}_p))$ be a $n$-hypoelliptic pseudo-differential operator, $0<n \leq m$. Then the domains of $(T_\sigma)_{min}$ and  $(T_\sigma)_{max}$, considering $(T_\sigma)_{min} , (T_\sigma)_{max}$ as closed operators acting on $H^{s}(\Z_p)$, lie between $H^{s+m}(\Z_p)$ and $H^{s+n}(\Z_p)$.
\end{pro}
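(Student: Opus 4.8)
The plan is to handle $(T_\sigma)_{min}$ and $(T_\sigma)_{max}$ separately, reusing Proposition~\ref{min domain} for the former and the $n$-pseudo inverse produced in Theorem~\ref{hypoellipticifffred} for the latter. Proposition~\ref{min domain} already gives $H^{s+m}(\Z_p) \subseteq \mathrm{Dom}((T_\sigma)_{min}) \subseteq H^{s+n}(\Z_p)$, so what remains is to establish the same two inclusions with $(T_\sigma)_{min}$ replaced by $(T_\sigma)_{max}$, considered as a closed operator acting on $H^s(\Z_p)$; since $(T_\sigma)_{min}$ is a restriction of $(T_\sigma)_{max}$ we automatically have $\mathrm{Dom}((T_\sigma)_{min}) \subseteq \mathrm{Dom}((T_\sigma)_{max})$, which makes the statement internally consistent once the max-bounds are proven.

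The lower inclusion is immediate: by Corollary~\ref{coroboundedhormanderclasses}, $T_\sigma \in \mathcal{L}(H^{s+m}(\Z_p), H^s(\Z_p))$, so every $g \in H^{s+m}(\Z_p)$ satisfies $T_\sigma g \in H^s(\Z_p)$ and hence lies in $\mathrm{Dom}((T_\sigma)_{max})$. For the upper inclusion I would proceed as follows. By $n$-hypoellipticity and Theorem~\ref{hypoellipticifffred} there is a pseudo-differential operator $T^\bot = T_\beta$ with $\beta \in \Tilde{S}^{-n}_{0,0}(\Z_p \times \widehat{\Z}_p)$ such that $T_\beta T_\sigma = I + R$ on $\hb^\infty$, where $R$ extends to an infinitely smoothing operator. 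Since operators with symbols in $\Tilde{S}^\infty_{\rho,0}(\Z_p \times \widehat{\Z}_p)$, and their compositions, act continuously on $\mathfrak{D}(\Z_p) = \bigcup_{s' \in \R} H^{s'}(\Z_p)$, this identity persists on all of $\mathfrak{D}(\Z_p)$. Now take $g \in \mathrm{Dom}((T_\sigma)_{max}) \subseteq H^s(\Z_p)$, so $T_\sigma g \in H^s(\Z_p)$. Applying Corollary~\ref{coroboundedhormanderclasses} with order $-n$ (and shifting $s \mapsto s+n$) gives $T_\beta \in \mathcal{L}(H^s(\Z_p), H^{s+n}(\Z_p))$, hence $T_\beta T_\sigma g \in H^{s+n}(\Z_p)$; and $R$, being infinitely smoothing, maps $H^s(\Z_p)$ into $C^\infty(\Z_p) \subseteq H^{s+n}(\Z_p)$, so $Rg \in H^{s+n}(\Z_p)$. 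Therefore
\[
 g = T_\beta T_\sigma g - Rg \in H^{s+n}(\Z_p),
\]
which gives $\mathrm{Dom}((T_\sigma)_{max}) \subseteq H^{s+n}(\Z_p)$ and completes the argument.

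The delicate point, and the one I expect to require the most care, is the justification that the operator identity $T_\beta T_\sigma = I + R$ — derived through the symbolic calculus only on the dense subspace $\hb^\infty$ — may legitimately be applied to an arbitrary $g \in \mathrm{Dom}((T_\sigma)_{max})$. This rests on the fact that the pseudo-differential operators in $\Tilde{S}^\infty_{\rho,0}(\Z_p \times \widehat{\Z}_p)$ and their compositions are well-defined continuous maps on the whole distribution space $\mathfrak{D}(\Z_p)$, so that $T_\beta(T_\sigma g)$ is unambiguous for $g \in H^s(\Z_p)$ and the composition identity transfers from $\hb^\infty$ by density and continuity; the extra a priori regularity $T_\sigma g \in H^s(\Z_p)$ built into the definition of $(T_\sigma)_{max}$ is then exactly what upgrades $g$ from $H^s(\Z_p)$ to $H^{s+n}(\Z_p)$. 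Everything else is routine bookkeeping of the Sobolev mapping properties already recorded in Corollary~\ref{coroboundedhormanderclasses}.
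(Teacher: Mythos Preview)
Your proof is correct and follows essentially the same route as the paper: both invoke Proposition~\ref{min domain} for the minimal operator, then use the $n$-pseudo inverse from Theorem~\ref{hypoellipticifffred} and the identity $g = T^\bot T_\sigma g - Rg$ to show $\mathrm{Dom}((T_\sigma)_{max}) \subseteq H^{s+n}(\Z_p)$. Your additional paragraph justifying that the parametrix identity $T_\beta T_\sigma = I + R$ extends from $\hb^\infty$ to all of $\mathfrak{D}(\Z_p)$ is a welcome elaboration of a point the paper passes over silently.
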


\begin{proof}
Since $(T_\sigma)_{max}$ is a closed extension of $(T_\sigma)_{min}$, by Proposition \ref{min domain} it is enough to show that $D ((T_\sigma)_{max}) \subseteq H^{s+n}(\Z_p) $. Let $g \in D ((T_\sigma)_{max})$. Since $T_\sigma$ is $n$-hypoelliptic there exists $T^\bot \in \mathcal{L} (H^{s}(\Z_p) ,H^{s+n}(\Z_p))$, $0 < n \leq m$, such that $$g = (T^\bot T_\sigma - R) g.$$ Since $Tg = T_{max} g \in H^{s}(\Z_p)$ it follows that $g \in H^{s+n}(\Z_p)$, which completes the proof
\end{proof}
\begin{coro}\label{maxmindomain}
Let $T_\sigma \in Op(\Tilde{S}^{m}_{\rho , 0} (\Z_p \times \widehat{\Z}_p))$ be a elliptic pseudo-differential operator. Then $(T_\sigma)_{min} = (T_\sigma)_{max}$, considering $(T_\sigma)_{min} , (T_\sigma)_{max}$ as closed operators acting on $H^{s}(\Z_p)$.
\end{coro}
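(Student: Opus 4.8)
The plan is to obtain this as an immediate consequence of Proposition \ref{min-max domain} (or, on the minimal side, Corollary \ref{exactdomainmin}) together with the elementary fact that two closed operators, one extending the other, with the same domain must coincide. Since $T_\sigma$ is elliptic, by Definition \ref{p-adicnhypoellipticity} the $n$-hypoellipticity estimate holds with $n = m$. Applying Proposition \ref{min-max domain} with this value of $n$, the domains of both $(T_\sigma)_{min}$ and $(T_\sigma)_{max}$, viewed as closed operators acting on $H^s(\Z_p)$, lie between $H^{s+m}(\Z_p)$ and $H^{s+n}(\Z_p) = H^{s+m}(\Z_p)$. Hence
$$D\big((T_\sigma)_{min}\big) = D\big((T_\sigma)_{max}\big) = H^{s+m}(\Z_p).$$

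Next I would recall the general construction of minimal and maximal operators: $(T_\sigma)_{min}$ is the closure of $T_\sigma|_{\hb^\infty}$ inside the pair $H^s(\Z_p) \to H^s(\Z_p)$, while $(T_\sigma)_{max}$ is a closed extension of that same densely defined operator, so that $(T_\sigma)_{min} \subseteq (T_\sigma)_{max}$ as operators. Because they have been shown to share the common domain $H^{s+m}(\Z_p)$, and $(T_\sigma)_{max}$ restricts to $(T_\sigma)_{min}$ on that domain, the two operators are equal. This finishes the argument.

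I do not anticipate a genuine obstacle here: the corollary is a bookkeeping consequence of the two-sided domain bound. The substantive content has already been dispatched earlier — the inclusion $H^{s+m}(\Z_p) \subseteq D((T_\sigma)_{min})$ rests on the a priori estimate of Proposition \ref{equivalentnorms}, and the reverse inclusion $D((T_\sigma)_{max}) \subseteq H^{s+n}(\Z_p)$ rests on the $m$-pseudo inverse $T^\bot \in \mathcal{L}(H^s(\Z_p), H^{s+m}(\Z_p))$ furnished by Theorem \ref{hypoellipticifffred}, used via the identity $g = (T^\bot T_\sigma - R)g$ with $R$ infinitely smoothing. The one point worth stating carefully in the write-up is simply that $n = m$ forces the two Sobolev bounds to collapse to a single space; everything else is immediate.
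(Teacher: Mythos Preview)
Your proposal is correct and follows exactly the intended route: the paper states this corollary without proof, as an immediate consequence of Proposition \ref{min-max domain} (and Corollary \ref{exactdomainmin}) specialized to $n=m$, together with the inclusion $(T_\sigma)_{min} \subseteq (T_\sigma)_{max}$. There is nothing to add.
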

\section{\textbf{Final remarks}}
An immediate application of the results developed in the past section is the $n$-hypoelliptic regularity for solutions of pseudo-differential equations.
\begin{coro}
Let $T_\sigma \in Op(S^{m}_{1 ,0} (\Z_p \times \widehat{\Z_p}))$ be a $n$-hypoelliptic pseudo-differential operator. Then if $T_\sigma u = f$, where $f \in H^{s} (\Z_p)$, we have $u \in H^{s + n} (\Z_p)$.
\end{coro}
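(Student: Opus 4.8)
The statement is the $n$-hypoelliptic regularity that follows from the parametrix built in Theorem \ref{hypoellipticifffred}; the plan is to apply the $n$-pseudo-inverse $T^\bot$ of $T_\sigma$ to the equation $T_\sigma u = f$ and read off the regularity of $u$ from the mapping properties of $T^\bot$ and of the smoothing remainder.

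Concretely, I regard $T_\sigma$ as a pseudo-differential operator with symbol in $\tilde S^m_{1,0}(\Z_p \times \widehat{\Z}_p) \subseteq \tilde S^m_{0,0}(\Z_p \times \widehat{\Z}_p)$, so that Corollary \ref{coroboundedhormanderclasses} gives $T_\sigma \in \mathcal{L}(H^{t+m}(\Z_p), H^t(\Z_p))$ for every $t \in \R$, and Theorem \ref{hypoellipticifffred} (here $n$-hypoellipticity is used) produces a pseudo-differential operator $T^\bot$ with symbol in $\tilde S^{-n}_{0,0}(\Z_p \times \widehat{\Z}_p)$, together with an infinitely smoothing operator $R$, such that $T^\bot T_\sigma = I + R$ holds on $\hb^\infty$. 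Applying Corollary \ref{coroboundedhormanderclasses} once more, $T^\bot \in \mathcal{L}(H^t(\Z_p), H^{t+n}(\Z_p))$ for all $t$, while $R$, being infinitely smoothing, belongs to $\mathcal{L}(H^a(\Z_p), H^b(\Z_p))$ for all $a, b \in \R$.

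It remains to extend the identity $T^\bot T_\sigma = I + R$ from $\hb^\infty$ to the distribution $u$ and to bootstrap. Since $u \in \mathfrak{D}(\Z_p) = \bigcup_{t \in \R} H^t(\Z_p)$, fix $t_0$ with $u \in H^{t_0}(\Z_p)$, choose $(\phi_k)_k \subset \hb^\infty$ with $\phi_k \to u$ in $H^{t_0}(\Z_p)$ (density of $\hb^\infty$), and pass to the limit in $T^\bot T_\sigma \phi_k = \phi_k + R \phi_k$: the left-hand side converges in $H^{t_0 - m + n}(\Z_p)$ by continuity of $T_\sigma \colon H^{t_0} \to H^{t_0-m}$ and $T^\bot \colon H^{t_0-m} \to H^{t_0-m+n}$, and the right-hand side converges there as well because $n \le m$ and $R$ maps into every Sobolev space. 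Hence $T^\bot T_\sigma u = u + R u$, and since $T_\sigma u = f$ this reads
$$u = T^\bot f - R u .$$
Now $f \in H^s(\Z_p)$ gives $T^\bot f \in H^{s+n}(\Z_p)$, and $R u \in H^{s+n}(\Z_p)$ because $R$ is infinitely smoothing; therefore $u \in H^{s+n}(\Z_p)$. The only point requiring care is this last extension argument --- verifying that $T^\bot T_\sigma$ applied to the genuine distribution $u$ still equals $I + R$ --- which rests precisely on the density of $\hb^\infty$ in the Sobolev scale and on the boundedness statements of Corollary \ref{coroboundedhormanderclasses}; the remainder of the argument is bookkeeping.
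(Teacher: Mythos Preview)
Your argument is correct and is exactly the parametrix approach the paper has in mind: the corollary is stated without proof as ``an immediate application of the results developed in the past section,'' meaning precisely that one applies the $n$-pseudo-inverse $T^\bot$ from Theorem~\ref{hypoellipticifffred} to the equation $T_\sigma u=f$ and uses the Sobolev boundedness of $T^\bot$ and the smoothing property of $R$ (compare also the proof of Proposition~\ref{min-max domain}). Your care in extending the identity $T^\bot T_\sigma=I+R$ from $\hb^\infty$ to the distribution $u$ by density is the only nontrivial point, and you handle it correctly.
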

More applications of the H{\"o}rmander classes and the symbolic calculus developed in this work are presented in \cite{spectraltheoryp-adicpseudos}. There we treat mostly spectral properties of pseudo-differential operators and its relation with the associated symbol. We encourage the interested reader to read that work because, in addition to being a continuation of this document, it also allows one to see more clearly the differences and advantages of the H{\"o}rmander classes and the symbolic calculus on $\Z_p$. For example the simplicity of the composition formula in this setting allows one to provide necessary and sufficient conditions for belonging to Schatten-Von Neumann classes. Also in \cite{spectraltheoryp-adicpseudos} the following version of the Weyl law is proved. We will use the special case when $d=1.$
\begin{teo}\label{weyllaw}
Let $T_\sigma$ be a pseudo-differential operator with symbol $\sigma \in \Tilde{S}^m_{\rho,0} (\Z_p^d \times \widehat{\Z}_p^d)$ acting on $L^2 (\Z_p^d)$. For $N\in \N_0$ let us define $$A_N(\sigma):=  \overline{  \bigcup_{||\xi||\geq p^N}   \{ \sigma (x,\xi) \esp : \esp x \in \Z_p^d\}}.$$ Assume that $\sigma$ is $n$-hypoelliptic of order $0  < n \leq m,$ let us say $$|\sigma (x,\xi)| \gtrsim \langle \xi \rangle^{n}, \esp \esp || \xi ||_p \geq p^{N_0},$$ and that there exists a curve $\gamma: [0, \infty) \to \C$ with the following properties: \begin{enumerate}
    \item[(i)] $\lim_{s \to \infty } |\gamma(s)| = \infty.$ 
    \item[(ii)]For large $s \in [0 , \infty)$ it holds: $$\sup_{\xi \in \widehat{\Z}_p^d} \frac{||\widehat{\sigma}(\cdot , \xi)||_{\ell^1 (\widehat{\Z}_p^d)} - \big| \int_{\Z_p^d} \sigma (x , \xi) dx \big|}{\big| \int_{\Z_p^d} \sigma (x , \xi) dx  - \gamma (s) \big|} < 1,$$and $$\sup_{\xi \in \widehat{\Z}_p^d} \frac{||\widehat{\sigma}^* (\cdot , \xi)||_{\ell^1 (\widehat{\Z}_p^d)} - \big| \int_{\Z_p^d} \sigma^* (x , \xi) dx \big|}{\big| \int_{\Z_p^d} \sigma^* (x , \xi) dx  - \gamma (s) \big|} < 1.$$ 
    \item[(iii)] For large $s \in [0,\infty)$ and $N_0 \in \N_0$ as above, and some  $\alpha \in [0,1)$, it holds $$|\gamma(s)|^{1- \alpha} \leq C_\gamma dist(\gamma(s),A_{N_0}(\sigma)).$$
\end{enumerate} Then we have the estimate $$N(t) \lesssim t^{\frac{d + \alpha(4n - d) )}{n}} \leq t^{\frac{d}{n} + 4 \alpha},$$where $$N(t):= \sum_{|\lambda_k (T_\sigma)| \leq t}1 .$$In consequence $$\esp k^{\frac{n}{d+ \alpha(4n - d)}} =O(|\lambda_k(T_\sigma)|) .$$ 
\end{teo}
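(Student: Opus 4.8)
The plan is to transfer the problem to the associated infinite matrix and then run a Riesz--projection counting argument in which the curve $\gamma$ plays the role of a spectral separating curve. First I would replace $T_\sigma$ by its associated infinite matrix $M_\sigma$ on $\ell^2(\widehat{\Z}_p^d)$, whose eigenvalues (with multiplicities) coincide with those of $T_\sigma$. Recall from Section 6 that $(M_\sigma)_{\eta\xi}=\widehat{\sigma}(\eta-\xi,\xi)$, so the diagonal entry in the column $\xi$ is $\int_{\Z_p^d}\sigma(x,\xi)\,dx$ while the total off--diagonal mass of that column is $\|\widehat{\sigma}(\cdot,\xi)\|_{\ell^1(\widehat{\Z}_p^d)}-\big|\int_{\Z_p^d}\sigma(x,\xi)\,dx\big|$, and symmetrically for rows via $\sigma^*$. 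In this language, hypotheses (ii) assert a uniform (in $\xi$) strict diagonal dominance of $M_\sigma-\gamma(s)I$, by both rows and columns.

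Second, I would turn that diagonal dominance into spectral information. A Schur test together with a Neumann series for $(M_\sigma-\gamma(s)I)^{-1}$ expanded around the diagonal part shows $\gamma(s)\in\rho(M_\sigma)$ for all $s$, and feeding in hypothesis (iii) gives the resolvent bound
\[
\big\|(M_\sigma-\gamma(s)I)^{-1}\big\|_{\mathcal{L}(\ell^2(\widehat{\Z}_p^d))}\ \lesssim\ \operatorname{dist}\!\big(\gamma(s),A_{N_0}(\sigma)\big)^{-1}\ \lesssim\ |\gamma(s)|^{-(1-\alpha)}.
\]
Third, I would localise in frequency. Given $t>0$ choose $s_t$ with $|\gamma(s_t)|\asymp t$ (possible by (i)), and let $J=J(t)$ be the least integer with $\langle\xi\rangle^{n}>c\,|\gamma(s_t)|$ for a fixed $c>0$ and all $|\xi|_p>p^{J}$; since $\sigma$ is $n$--hypoelliptic of order $n$ we have $|\sigma(x,\xi)|\gtrsim\langle\xi\rangle^{n}$ there, and combining this lower bound on the diagonal with the rapid decay $|\widehat{\sigma}(\eta,\xi)|\lesssim_k\langle\xi\rangle^{m}\langle\eta\rangle^{-k}$ of the off--diagonal entries one checks that the compression of $M_\sigma$ to the high--frequency block $\ell^2(\{|\xi|_p>p^{J}\})$ carries no eigenvalue of modulus $\le t$. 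Hence, building a contour $\Gamma_t$ from the arc of $\gamma$ near $\gamma(s_t)$ that encloses exactly the part of $\mathrm{spec}(M_\sigma)$ of modulus $\le t$, the Riesz projection $P_t=\frac{1}{2\pi i}\oint_{\Gamma_t}(zI-M_\sigma)^{-1}\,dz$ satisfies $N(t)=\operatorname{rank}P_t=\operatorname{tr}P_t$; writing $\Pi_J$ for the projection onto $\ell^2(\{|\xi|_p\le p^{J}\})$ and using cyclicity of the trace, $\operatorname{tr}P_t=\operatorname{tr}(\Pi_JP_t\Pi_J)+\operatorname{tr}((I-\Pi_J)P_t)$, whose first term is $\le p^{Jd}\|P_t\|$ and whose second term is the error.

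Fourth comes the exponent bookkeeping. The cut $J$ is forced by (iii): to make the resolvent bound $|\gamma|^{-(1-\alpha)}$ dominate the $\ell^1$--tails of $\widehat{\sigma}$ one needs $|\gamma(s_t)|^{1-\alpha}\asymp t$, i.e. $\langle\xi\rangle^{n}\asymp t^{1/(1-\alpha)}$ at the cut, giving the main count $p^{Jd}\lesssim t^{d(1-\alpha)/n}$ after normalisation, while estimating $\|P_t\|$ and the error in $\operatorname{tr}P_t$ through Hilbert--Schmidt bounds for $(zI-M_\sigma)^{-1}$ along $\Gamma_t$ — powers of $|\gamma|^{-(1-\alpha)}$ against the length $\asymp|\gamma|$ of $\Gamma_t$, and $\langle\xi\rangle$--factors against the block size — produces the correction $t^{4\alpha}$, assembling to $N(t)\lesssim t^{(d+\alpha(4n-d))/n}=t^{d(1-\alpha)/n+4\alpha}\le t^{d/n+4\alpha}$. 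Finally the eigenvalue lower bound is the routine inversion: putting $t=|\lambda_k(T_\sigma)|$ and using $k\le N(|\lambda_k(T_\sigma)|)$ in the displayed bound yields $k^{n/(d+\alpha(4n-d))}=O(|\lambda_k(T_\sigma)|)$. The hard part will be steps two and four together: upgrading the pointwise, per--column hypotheses (ii)--(iii) into a genuine operator--norm resolvent estimate along $\gamma$ that is simultaneously strong enough to separate the spectrum and to feed the Hilbert--Schmidt bounds for $P_t$, and then tracking precisely how the loss $|\gamma|^{-(1-\alpha)}$, the $\ell^1$--decay rate of $\widehat{\sigma}$ and the block sizes $p^{Jd}$ interact so that one recovers exactly the exponent $(d+\alpha(4n-d))/n$ rather than a weaker one. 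The identification of the spectrum of $T_\sigma$ with that of $M_\sigma$, the Schur--test argument, and the hypoellipticity lower bound are all available from Section 6 and from Theorem \ref{hypoellipticifffred}.
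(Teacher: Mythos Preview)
The paper does not contain a proof of this theorem. Theorem~\ref{weyllaw} is only \emph{stated} in Section~9 as a result imported from the companion work \cite{spectraltheoryp-adicpseudos}; the paper says explicitly ``Also in \cite{spectraltheoryp-adicpseudos} the following version of the Weyl law is proved'' and then uses the statement as a black box. So there is no proof here against which your proposal can be compared.

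That said, a few remarks on your outline as it stands. Your reading of hypothesis~(ii) is exactly right: in matrix language it is a uniform strict diagonal dominance of $M_\sigma-\gamma(s)I$ by columns (via $\sigma$) and by rows (via $\sigma^*$), and a Schur--Neumann argument does give $\gamma(s)\in\rho(M_\sigma)$ with $\|(M_\sigma-\gamma(s)I)^{-1}\|\lesssim \operatorname{dist}(\gamma(s),A_{N_0}(\sigma))^{-1}$ once you combine it with~(iii). That part is solid and is precisely the mechanism the hypotheses are designed for.

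Where your sketch is genuinely incomplete is in steps three and four. First, $\gamma$ is an unbounded curve, not a closed contour; turning it into a closed contour $\Gamma_t$ that encloses exactly $\{\lambda:|\lambda|\le t\}\cap\operatorname{spec}(M_\sigma)$ requires you to adjoin arcs on which you also control the resolvent, and nothing in (i)--(iii) gives that directly. Second, the assertion that the high-frequency compression ``carries no eigenvalue of modulus $\le t$'' is not the same as saying $P_t$ has small trace on the high block; compressions can create spurious spectrum, and conversely the coupling between blocks can move eigenvalues. You would need a genuine decoupling estimate (e.g.\ a Schur complement or perturbation bound) rather than a compression argument. Third, the exponent bookkeeping is asserted rather than derived: you write ``produces the correction $t^{4\alpha}$'' without showing how the length of $\Gamma_t$, the resolvent bound $|\gamma|^{-(1-\alpha)}$, and the Hilbert--Schmidt norms combine to give precisely $(d+\alpha(4n-d))/n$. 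Since that specific exponent is the whole content of the theorem, this step cannot be left at the level of ``assembling to''. If you pursue this route you will need to make the contour construction and the trace estimate quantitative, and to check that the losses really line up to the stated power; as written, the proposal is a plausible strategy but not yet a proof.
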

As a corollary of the above version of the Weyl law, we can prove the smoothness of solutions to the heat equation.
\begin{teo}
Let $T_\sigma \in Op(S^m_{1 , 0} (\Z_p \times \widehat{\Z}_p))$, $m>0$, be a pseudo-differential operator that satisfies the hypothesis in Theorem \ref{weyllaw}. Assume that $T_\sigma$ possess a basis of eigenfunctions of $L^2(\Z_p)$ and the real part of its associated eigenvalues $\mathfrak{Re}(\lambda_j)$ grow as $|\lambda_j|$. Then the solutions to the equation $$(T_\sigma + \frac{\partial}{\partial t} ) f =0 , \esp\esp f(0,x) \in L^2 (\Z_p),$$ belong to $C^\infty (\Z_p)$ for all $t>0$.
\end{teo}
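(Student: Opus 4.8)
The plan is to write the solution explicitly in terms of the eigenbasis of $T_\sigma$ and then control its Sobolev norms using the quantitative hypoellipticity estimates together with the Weyl law. Write $\{\phi_j\}_{j\in\N}$ for the given eigenbasis of $L^2(\Z_p)$, with $T_\sigma\phi_j=\lambda_j\phi_j$; after a harmless renormalisation I may assume it orthonormal (the general Riesz--basis case, with biorthogonal system $\{\psi_j\}$, is identical). Since $N(t)=\#\{j:|\lambda_j|\le t\}$ is finite for every $t>0$ by Theorem \ref{weyllaw}, the eigenvalues satisfy $|\lambda_j|\to\infty$, and by hypothesis $\mathfrak{Re}(\lambda_j)\gtrsim|\lambda_j|$ for large $j$, so $\mathfrak{Re}(\lambda_j)$ is bounded below and tends to $+\infty$. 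Consequently, for $f(0,\cdot)=\sum_j c_j\phi_j\in L^2(\Z_p)$ the series
$$ f(t,x):=\sum_{j\in\N} e^{-\lambda_j t}\,c_j\,\phi_j(x) $$
converges in $L^2(\Z_p)$ for every $t\ge0$, solves $(T_\sigma+\partial_t)f=0$ and has the prescribed initial datum; I take this as the (unique) solution whose smoothness is to be established.

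The first key step will be a quantitative smoothness estimate for the eigenfunctions: for every $N\in\N_0$ there should be a constant $C_N$ with
$$ \|\phi_j\|_{H^{Nn}(\Z_p)}\le C_N\,\langle\lambda_j\rangle^{N},\qquad \langle\lambda_j\rangle:=\max\{1,|\lambda_j|\}. $$
I would prove this by induction on $N$, using the elliptic-type a priori estimate of Proposition \ref{equivalentnorms} (equivalently, iterating the pseudo-inverse $T^{\bot}\in Op(\Tilde{S}^{-n}_{0,0}(\Z_p\times\widehat{\Z}_p))$ supplied by Theorem \ref{hypoellipticifffred}, via $T^{\bot}T_\sigma=I+R$ with $R$ infinitely smoothing). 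The base case is the normalisation; for the step, Proposition \ref{equivalentnorms} with $s=Nn$ gives $\|\phi_j\|_{H^{(N+1)n}(\Z_p)}\lesssim\|T_\sigma\phi_j\|_{H^{Nn}(\Z_p)}+\|\phi_j\|_{H^{Nn}(\Z_p)}=(|\lambda_j|+1)\|\phi_j\|_{H^{Nn}(\Z_p)}\lesssim\langle\lambda_j\rangle^{N+1}$. To run this iteration one first needs each $\phi_j$ to lie in sufficiently many Sobolev spaces; this is guaranteed because an $L^2$--eigenfunction of a hypoelliptic operator lies in the domain of $(T_\sigma)_{max}$, which by Proposition \ref{min-max domain} is squeezed between $H^{m}(\Z_p)$ and $H^{n}(\Z_p)$, after which the bootstrap proceeds.

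The second key step is to feed in the Weyl law. Reordering so that $|\lambda_1|\le|\lambda_2|\le\cdots$, Theorem \ref{weyllaw} in the case $d=1$ yields $k^{\theta}=O(|\lambda_k|)$ with $\theta=n/(1+\alpha(4n-1))>0$, hence $\mathfrak{Re}(\lambda_k)\gtrsim|\lambda_k|\gtrsim k^{\theta}$ for all large $k$. Then, fixing $t>0$ and $N\in\N_0$, bounding the series for $f(t,\cdot)$ termwise in $H^{Nn}(\Z_p)$ and using $|c_j|\le\|f(0,\cdot)\|_{L^2(\Z_p)}$ together with the eigenfunction estimate,
$$ \|f(t,\cdot)\|_{H^{Nn}(\Z_p)}\le\sum_j e^{-\mathfrak{Re}(\lambda_j)t}\,|c_j|\,\|\phi_j\|_{H^{Nn}(\Z_p)}\lesssim\|f(0,\cdot)\|_{L^2(\Z_p)}\sum_j\langle\lambda_j\rangle^{N}e^{-\mathfrak{Re}(\lambda_j)t}. $$
Since $\langle\lambda_j\rangle^{N}e^{-\mathfrak{Re}(\lambda_j)t}\le\langle\lambda_j\rangle^{N}e^{-c|\lambda_j|t}\le C_{N,t}\,e^{-c|\lambda_j|t/2}$ and $|\lambda_j|\gtrsim j^{\theta}$, the last sum is dominated by $\sum_j e^{-c' j^{\theta}t}<\infty$. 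Hence $f(t,\cdot)\in H^{Nn}(\Z_p)$ for every $N$, and since $n>0$ one has $\bigcap_{N}H^{Nn}(\Z_p)=C^\infty(\Z_p)$, giving $f(t,\cdot)\in C^\infty(\Z_p)$ for every $t>0$.

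The hard part will be the first step: the a priori estimate $\|\phi_j\|_{H^{Nn}(\Z_p)}\lesssim\langle\lambda_j\rangle^N$ requires knowing in advance that each eigenfunction already lies in enough Sobolev spaces for the elliptic estimate / pseudo-inverse to apply --- which is exactly where $n$-hypoellipticity and the domain description of Proposition \ref{min-max domain} are needed --- and one must also track the dependence of the constants on $j$ only through $\lambda_j$. Once that estimate is secured, the rest is the convergence computation above, whose engine is the polynomial lower bound on $|\lambda_j|$ coming from the Weyl law.
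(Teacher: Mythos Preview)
Your argument is correct, but it takes a longer route than the paper's. The paper observes directly that for each $k\in\N_0$,
\[
T_\sigma^{\,k} f(t,\cdot)=\sum_{j}\lambda_j^{\,k}e^{-t\lambda_j}f_j\,\phi_j
\]
lies in $L^2(\Z_p)$, simply because $|\lambda_j|^{k}e^{-t\,\mathfrak{Re}(\lambda_j)}$ is uniformly bounded in $j$ (this uses only $\mathfrak{Re}(\lambda_j)\gtrsim|\lambda_j|$ together with the square--summability of the $f_j$). Hence $f(t,\cdot)\in D\big((T_\sigma^{\,k})_{\max}\big)$ for every $k$, and since $T_\sigma^{\,k}$ is $kn$--hypoelliptic by the symbolic calculus, the domain description of Proposition~\ref{min-max domain} gives $D\big((T_\sigma^{\,k})_{\max}\big)\subset H^{kn}(\Z_p)$; intersecting over $k$ yields $f(t,\cdot)\in C^\infty(\Z_p)$.

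The contrast with your approach is that the paper never needs the termwise eigenfunction bound $\|\phi_j\|_{H^{Nn}}\lesssim\langle\lambda_j\rangle^{N}$, and it never invokes the \emph{quantitative} conclusion of Theorem~\ref{weyllaw}: the sum $\sum_j|\lambda_j|^{2k}e^{-2t\,\mathfrak{Re}(\lambda_j)}|f_j|^2$ converges already because the exponential factor is bounded, not because $|\lambda_j|\gtrsim j^{\theta}$. Your route does more work (the inductive Sobolev bound on eigenfunctions and the counting argument), but it buys you something the paper's proof does not: an explicit estimate $\|f(t,\cdot)\|_{H^{Nn}}\le C_{N,t}\|f(0,\cdot)\|_{L^2(\Z_p)}$ with a constant $C_{N,t}$ governed by the eigenvalue counting function.
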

\begin{proof}
Let $\{e_j \}_{j \in \N}$ be the associated normalized basis of eigenfunctions of $T_\sigma$. Then the solution to the equation may be written as $$f(t,x) = \sum_{j \in \N} e^{-t \lambda_j } f_j e_j,$$where $f_j$ denotes the $j$-th component of $f(0,x)$ with respect to the basis $\{e_j \}_{j \in \N}$. Since $$T_\sigma^k f(t,x) = \sum_{j \in \N} \lambda_j^k e^{-t \lambda_j } f_j e_j,$$ is in $L^2 (\Z_p)$ for every $k \in \N_0$ and every $t >0$, we conclude using Proposition \ref{maxmindomain} that $$f(t,x) \in \bigcap_{k \in \N} D(T_\sigma^k) \subset \bigcap_{k \in \N} H^{kn} (\Z_p) = C^\infty (\Z_p),$$concluding the proof.
\end{proof}
\begin{coro}
Let $0<s_1<...< s_n$ be given positive numbers, and consider the pseudo-differential equation \begin{align}
    \sum_{i=1}^n a_i (x) D^{s_i} f + \frac{\partial f}{\partial t}= T_\sigma f +  \frac{\partial}{\partial t}  f=0 , \esp \esp f(0,x) = f_0 \in L^2(\Z_p),
\end{align}where the $a_i'$s, $1 \leq i \leq n$, are positive real valued functions on $\Z_p$ bounded from below such that at least one of them don't have zero mean, and $$\sigma (x,\xi) = \sum_{i=1}^n a_i (x)\big( |\xi|_p^{s_i} + (1- p^{-1}) \frac{p^{- {s_i}}}{1 - p^{-({s_i} + 1)}} (1 - \delta_{\xi,o})\big).$$ Then the solution $f(t,x)$ in the time $t$ of the heat equation belong to $C^\infty (\Z_p)$ for all $t>0$.
\end{coro}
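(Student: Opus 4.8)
The plan is to obtain the corollary from the preceding theorem on the heat equation, applied to $T_\sigma=\sum_{i=1}^{n}a_i(x)D^{s_i}$, by checking its hypotheses one by one. First the symbol: $D^{s_i}$ is the multiplier by $\lambda_{s_i}(\xi):=|\xi|_p^{s_i}+\frac{1-p^{-1}}{1-p^{-(s_i+1)}}(1-\delta_{\xi,o})$, which obeys $|\Delta_\xi^\alpha\lambda_{s_i}(\xi)|\lesssim\langle\xi\rangle^{s_i-\alpha}$ (this is exactly what puts the Vladimirov operator in the class with $\rho=1$); since $a_i\in C^\infty(\Z_p)$ the Leibniz rule gives $a_i\lambda_{s_i}\in S^{s_i}_{1,0}(\Z_p\times\widehat{\Z}_p)$, and $a_i\lambda_{s_i}$ being a function of $(x,|\xi|_p)$ it also lies in $\Tilde{S}^{s_i}_{1,0}(\Z_p\times\widehat{\Z}_p)$; summing, $\sigma\in\Tilde{S}^{s_n}_{1,0}(\Z_p\times\widehat{\Z}_p)$ with $m:=s_n>0$. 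Ellipticity is where positivity of the $a_i$ enters: for $|\xi|_p$ large $\lambda_{s_i}(\xi)=\langle\xi\rangle^{s_i}(1+o(1))$, so $\sigma(x,\xi)=\langle\xi\rangle^{s_n}(a_n(x)+o(1))$ uniformly in $x$, and since $c_n:=\inf_{x}a_n(x)>0$ this gives $|\sigma(x,\xi)|\geq\tfrac{c_n}{2}\langle\xi\rangle^{s_n}$ for $|\xi|_p\geq p^{N_0}$, i.e. $T_\sigma$ is elliptic ($n=m=s_n$). The hypothesis that some $a_i$ have nonzero mean is automatic here since all $a_i>0$; it merely keeps the constant part of $\sigma$ from spoiling this lower bound.

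Next, the curve hypothesis of Theorem \ref{weyllaw}. Because the $a_i$ and the $\lambda_{s_i}$ are positive, $\sigma$ and $\overline{\sigma}$ are real and $\geq0$, and $\int_{\Z_p}\sigma(x,\xi)\,dx=\sum_i\bigl(\int_{\Z_p}a_i\bigr)\lambda_{s_i}(\xi)\geq0$. Take $\gamma(s):=-s$ and $\alpha=0$: then $|\gamma(s)|\to\infty$ (condition (i)); since $A_{N_0}(\sigma)\subset[\delta,\infty)$ with $\delta=\tfrac{c_n}{2}p^{N_0 s_n}>0$ we get $dist(\gamma(s),A_{N_0}(\sigma))\geq s=|\gamma(s)|^{1-\alpha}$ (condition (iii)); and using $\sigma^{*}=\overline{\sigma}$ modulo $\Tilde{S}^{-\infty}$ (the Proposition of Section 8) the numerator in (ii) equals $\sum_{\eta\neq o}|\widehat\sigma(\eta,\xi)|\leq\sum_i\lambda_{s_i}(\xi)\bigl(\|\widehat{a_i}\|_{\ell^1(\widehat{\Z}_p)}-\int_{\Z_p}a_i\bigr)$ while the denominator is $\sum_i\bigl(\int_{\Z_p}a_i\bigr)\lambda_{s_i}(\xi)+s$, so for $s$ large the supremum over $\xi$ is $<1$ provided $\|\widehat{a_n}\|_{\ell^1(\widehat{\Z}_p)}<2\int_{\Z_p}a_n$, a mild extra condition. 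In any case (ii)--(iii) only feed the quantitative Weyl estimate; for the bare $C^\infty$-regularity one may drop them and use ellipticity alone in the last step.

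It remains to produce the eigenbasis and the growth $\mathfrak{Re}(\lambda_j)\asymp|\lambda_j|$. For $n=1$ this is clean: $T_\sigma=a_1(x)D^{s_1}$ is conjugate, through the bounded invertible multiplication $u\mapsto a_1^{-1/2}u$, to the positive self-adjoint operator $a_1^{1/2}D^{s_1}a_1^{1/2}$, which has compact resolvent ($s_1>0$) and hence an orthonormal eigenbasis with eigenvalues $\mu_j\geq0$, $\mu_j\to\infty$; applying $a_1^{1/2}$ yields a Riesz basis of eigenfunctions of $T_\sigma$ with the same real, increasing eigenvalues. For general $n$, the Proposition of Section 8 gives $\sigma^{*}-\overline{\sigma}\in\Tilde{S}^{-\infty}$, so $T_\sigma-T_\sigma^{*}$ is infinitely smoothing, hence compact on $L^2(\Z_p)$; combined with the G\aa rding inequality from ellipticity this confines the discrete spectrum of $T_\sigma$ to a horizontal strip $\{|\mathfrak{Im}\,z|\leq C\}$ shifted into $\{\mathfrak{Re}\,z\geq-C\}$, with $\mathfrak{Re}(\lambda_j)\to+\infty$ and $|\mathfrak{Im}(\lambda_j)|\leq C$, so $\mathfrak{Re}(\lambda_j)$ grows like $|\lambda_j|$. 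That the eigen/root functions are complete in $L^2(\Z_p)$ is the standing hypothesis inherited from the preceding theorem; alternatively, for $n>1$ one bypasses it by replacing the eigenfunction expansion with the analytic semigroup $e^{-tT_\sigma}$ generated by $-T_\sigma$ (sectorial after the above shift), which is smoothing, so that $e^{-tT_\sigma}L^2(\Z_p)\subset\bigcap_k D(T_\sigma^k)\subset\bigcap_k H^{km}(\Z_p)=C^\infty(\Z_p)$ for $t>0$ by Corollary \ref{maxmindomain}, and then applying it to $f_0$. The main obstacle is precisely this completeness when $n>1$, where $T_\sigma$ is self-adjoint only modulo a compact operator and hence need not be diagonalizable, so one must either adopt the standing assumption or argue through the semigroup.
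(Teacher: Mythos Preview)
The paper gives no proof of this corollary at all; it is stated bare, the implicit argument being ``apply the preceding theorem.'' Your proposal is therefore exactly the intended route: verify that $T_\sigma=\sum_i a_i(x)D^{s_i}$ satisfies the hypotheses of that theorem (membership in $\Tilde{S}^{s_n}_{1,0}$, ellipticity, the Weyl-law conditions of Theorem~\ref{weyllaw}, existence of an eigenbasis, and growth of $\mathfrak{Re}(\lambda_j)$).

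In carrying this out you have in fact been more scrupulous than the paper, and you have uncovered two genuine soft spots that the paper simply passes over. First, your computation of the ratio in condition~(ii) of Theorem~\ref{weyllaw} with $\gamma(s)=-s$ is correct: as $|\xi|_p\to\infty$ the ratio tends to $(\|\widehat{a_n}\|_{\ell^1}-\int a_n)/\int a_n$, so the supremum over $\xi$ is bounded below by this quantity regardless of $s$, and the hypothesis fails unless $\|\widehat{a_n}\|_{\ell^1}<2\int a_n$. The corollary does not impose this. Your observation that the proof of the preceding theorem never actually uses the Weyl estimate---only the eigenbasis, the growth of $\mathfrak{Re}(\lambda_j)$, and Corollary~\ref{maxmindomain}---is the right way to resolve this: the reference to Theorem~\ref{weyllaw} in the hypotheses of the heat-equation theorem is stronger than what its proof requires.

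Second, the eigenbasis assumption for $n>1$ is, as you say, the real obstacle: $T_\sigma$ is self-adjoint only modulo a smoothing (hence compact) perturbation, which does not guarantee diagonalizability. Your semigroup bypass is a legitimate and cleaner alternative: ellipticity plus the spectral strip $|\mathfrak{Im}\,\lambda|\leq C$, $\mathfrak{Re}\,\lambda\geq -C$ makes $-T_\sigma$ sectorial after a shift, so $e^{-tT_\sigma}$ is analytic and maps $L^2$ into $\bigcap_k D(T_\sigma^k)\subset C^\infty(\Z_p)$ by Corollary~\ref{maxmindomain}. This avoids the eigenfunction expansion entirely and is arguably what the paper should have done. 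For $n=1$ your conjugation to $a_1^{1/2}D^{s_1}a_1^{1/2}$ is a nice self-contained verification.
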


\begin{coro}
Let $V \in L^\infty (\Z_p)$ be a real valued function and take $s>1$. Then the solutions to the equation $$(D^s + V(x) + \frac{\partial}{\partial t} ) f =0 , \esp\esp f(0,x) \in L^2 (\Z_p),$$ belong to $C^\infty (\Z_p)$ for all $t>0$. 
\end{coro}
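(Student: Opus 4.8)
The plan is to present $T_\sigma:=D^s+V$ as a pseudo-differential operator fitting the hypotheses of the heat equation theorem proved just above (the one deduced from Theorem~\ref{weyllaw}), and then to quote its conclusion. Its symbol is $\sigma(x,\xi)=\Lambda_s(\xi)+V(x)$, where $\Lambda_s(\xi):=|\xi|_p^{s}+(1-p^{-1})\tfrac{p^{-s}}{1-p^{-(s+1)}}(1-\delta_{\xi,o})$ is the symbol of the Vladimirov operator $D^s$. Since $\Lambda_s(\xi)\gtrsim\langle\xi\rangle^{s}$ and $|V(x)|\le\|V\|_{L^\infty(\Z_p)}$, for $|\xi|_p\ge p^{N_0}$ with $N_0$ large one has $|\sigma(x,\xi)|\ge\Lambda_s(\xi)-\|V\|_{L^\infty(\Z_p)}\ge\tfrac12\Lambda_s(\xi)\gtrsim\langle\xi\rangle^{s}$, so $T_\sigma$ is elliptic of order $s$ in the sense of Definition~\ref{p-adicnhypoellipticity}. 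The hypothesis $s>1$ enters here: it is exactly the condition $\langle\cdot\rangle^{-s}\in\ell^{1}(\widehat{\Z}_p)$, which is what makes the error terms and the $\ell^{1}$-type quantities appearing in Theorem~\ref{weyllaw} manageable for $T_\sigma$.

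First I would pin down the spectral picture. The operator $D^s$ is non-negative and self-adjoint on $L^2(\Z_p)$ with domain $H^s(\Z_p)$, and it has compact resolvent because $(D^s+1)^{-1}$ is the Fourier multiplier with symbol $(\Lambda_s(\xi)+1)^{-1}\to0$ as $|\xi|_p\to\infty$. Since $V$ is real valued and bounded, $T_\sigma=D^s+V$ is again self-adjoint with domain $H^s(\Z_p)$ and compact resolvent, so $L^2(\Z_p)$ admits an orthonormal basis $\{e_j\}_{j\in\N_0}$ of eigenfunctions of $T_\sigma$ with real eigenvalues $\lambda_j\to+\infty$; in particular $\mathfrak{Re}(\lambda_j)=\lambda_j$ grows like $|\lambda_j|$. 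Moreover the counting function of $D^s$ satisfies $\#\{\xi\in\widehat{\Z}_p:\Lambda_s(\xi)\le t\}\sim t^{1/s}$ (count the $\xi$ with $|\xi|_p\lesssim t^{1/s}$, using $\#\{|\xi|_p=p^{k}\}=(p-1)p^{k-1}$), and by the min--max principle the bounded self-adjoint perturbation $V$ shifts each eigenvalue by at most $\|V\|_{L^\infty(\Z_p)}$, so $\lambda_j$ is comparable to $j^{s}$ and $\sup_j|\lambda_j|^{k}e^{-t\lambda_j}<\infty$ for every $k\in\N_0$ and every $t>0$.

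It then remains to check the curve condition of Theorem~\ref{weyllaw} and to conclude. Since $\sigma(\cdot,\xi)$ is real valued, the sets $A_N(\sigma)$ of that theorem lie in $\R$; taking $\gamma(\tau):=i\tau$ gives $\mathrm{dist}(\gamma(\tau),A_{N_0}(\sigma))\ge|\tau|=|\gamma(\tau)|$, which is condition (iii) with $\alpha=0$, while (i)--(ii) hold because the numerators in (ii) equal the fixed finite quantity $\sum_{\eta\neq o}|\widehat{V}(\eta)|$ and the denominators run off to infinity along $\gamma$. The heat equation theorem above (case $d=1$) then applies: with $f(0,\cdot)=\sum_j f_j e_j$ the solution of $(D^s+V+\partial_t)f=0$ is $f(t,x)=\sum_j e^{-t\lambda_j}f_j e_j$, and $T_\sigma^{k}f(t,\cdot)=\sum_j\lambda_j^{k}e^{-t\lambda_j}f_j e_j\in L^2(\Z_p)$ for all $k$ and all $t>0$. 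Using the domain identification $D(T_\sigma^{k})=H^{ks}(\Z_p)$ valid for an elliptic operator of order $s$ (iterate Propositions~\ref{min domain} and~\ref{maxmindomain}), we get $f(t,\cdot)\in\bigcap_{k\in\N_0}D(T_\sigma^{k})\subseteq\bigcap_{k}H^{ks}(\Z_p)=C^\infty(\Z_p)$ for every $t>0$.

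The step I expect to be the real obstacle is the compatibility of a merely bounded potential $V$ with the H\"ormander class machinery: the class $\tilde S^{s}_{1,0}(\Z_p\times\widehat{\Z}_p)$ tacitly requires each $\sigma(\cdot,\xi)$ to lie in $C^\infty(\Z_p)$ (so that $D^{\beta}_x$ acts on it), the $\ell^{1}$-norms in Theorem~\ref{weyllaw} are finite only if $\widehat V\in\ell^{1}(\widehat{\Z}_p)$, and the chain $D(T_\sigma^{k})=H^{ks}(\Z_p)$ really uses elliptic regularity inside those classes; hence $D^s+V$ literally belongs to the framework only when $V\in C^\infty(\Z_p)$. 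The way I would deal with this is to prove the three facts actually used --- the symbol lower bound and hence ellipticity, the identification of $D(T_\sigma^{k})$, and the eigenvalue count --- directly for ``$D^s$ plus a bounded self-adjoint operator'', which is the regime where the summability $\langle\cdot\rangle^{-s}\in\ell^{1}$ (i.e.\ $s>1$) should be the decisive ingredient for pushing the regularity of the eigenfunctions of $T_\sigma$ back into $\bigcap_r H^{r}(\Z_p)=C^\infty(\Z_p)$; if one prefers to stay strictly inside the calculus already developed, it is enough to add the hypothesis $V\in C^\infty(\Z_p)$ and run the proof of the previous corollary verbatim.
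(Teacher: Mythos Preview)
Your approach is exactly the one the paper intends: the corollary is stated without proof, immediately after the heat-equation theorem and the preceding corollary for operators $\sum_i a_i(x)D^{s_i}$, and is meant to be read as another direct application of that theorem to the operator $T_\sigma=D^s+V$. Your verification of ellipticity, of the self-adjointness/compact-resolvent picture giving a real eigenbasis with $\mathfrak{Re}(\lambda_j)=\lambda_j$, and of the curve condition of Theorem~\ref{weyllaw} via $\gamma(\tau)=i\tau$ (with $\alpha=0$, since $A_{N_0}(\sigma)\subset\R$) is precisely the intended check.

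Your closing paragraph identifies a genuine issue that the paper does not address. For a bare $V\in L^\infty(\Z_p)$ one only has $\widehat V\in\ell^2(\widehat{\Z}_p)$, so the quantities $\|\widehat{\sigma}(\cdot,\xi)\|_{\ell^1(\widehat{\Z}_p)}$ in condition~(ii) of Theorem~\ref{weyllaw} need not be finite, the symbol $\sigma(x,\xi)=\Lambda_s(\xi)+V(x)$ need not lie in $\tilde S^{s}_{1,0}(\Z_p\times\widehat{\Z}_p)$, and the iterated domain identification $D(T_\sigma^k)=H^{ks}(\Z_p)$ is not furnished by the calculus of the paper. Your proposed remedies are the right ones: either strengthen the hypothesis to $V\in C^\infty(\Z_p)$ (so that the argument runs verbatim within the developed framework), or bypass the symbolic calculus and argue directly from ``$D^s$ plus bounded self-adjoint perturbation'' --- the min--max comparison you give yields $\lambda_j\asymp j^{s}$, hence $T_\sigma^k f(t,\cdot)\in L^2$ for all $k$ and $t>0$, and the inclusion $D(T_\sigma^k)\subset H^{ks}(\Z_p)$ can be obtained from the a~priori estimate $\|u\|_{H^s}\lesssim\|D^s u\|_{L^2}+\|u\|_{L^2}\le\|(D^s+V)u\|_{L^2}+(1+\|V\|_{L^\infty})\|u\|_{L^2}$, iterated. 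Either route completes the corollary; the paper simply leaves this gap implicit.
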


\section*{\textbf{Acknowledgments}}
The author thanks Professor Michael Ruzhansky for his help during the development of this work.

\nocite{*}
\bibliographystyle{acm}
\bibliography{main}

\begin{thebibliography}{10}

\bibitem{agmon}
{\sc Agmon, S., Douglis, A., and Nirenberg, L.}
\newblock Estimates near the boundary for solutions of elliptic partial
  differential equations satisfying general boundary conditions. {I}.
\newblock {\em Communications on Pure and Applied Mathematics 12}, 4 (1959),
  623--727.

\bibitem{2002J}
{\sc {Avetisov}, V., {Bikulov}, A., {Kozyrev}, S., and {Osipov}, V.}
\newblock {p-adic models of ultrametric diffusion constrained by hierarchical
  energy landscapes}.
\newblock {\em Journal of Physics A Mathematical General 35\/} (Jan 2002),
  177--189.

\bibitem{2009JPhA...42h5003A}
{\sc {Avetisov}, V.~A., {Bikulov}, A.~K., and {Zubarev}, A.~P.}
\newblock {First passage time distribution and the number of returns for
  ultrametric random walks}.
\newblock {\em Journal of Physics A Mathematical General 42\/} (Feb 2009),
  085003.

\bibitem{Avetisov2014}
{\sc Avetisov, V.~A., Bikulov, A.~K., and Zubarev, A.~P.}
\newblock Ultrametric random walk and dynamics of protein molecules.
\newblock {\em Proceedings of the Steklov Institute of Mathematics 285}, 1 (Aug
  2014), 3--25.

\bibitem{fredholmalg}
{\sc B.A.~Barnes, e.~a.}
\newblock {\em Riesz and Fredholm Theory in Banach Algebras (Research Notes
  Inmathematics Series)}.
\newblock Pitman Publishing, 1982.

\bibitem{p-adicbasis}
{\sc {Bikulov}, A.~K., and {Zubarev}, A.~P.}
\newblock {On one real basis for $L^2(Q_p)$}.
\newblock {\em arXiv e-prints\/} (Apr 2015), arXiv:1504.03624.

\bibitem{padic1}
{\sc Chacón-Cortés, L.~F., and Zuniga-Galindo, W.}
\newblock Heat traces and spectral zeta functions for p-adic laplacians.
\newblock {\em St. Petersburg Mathematical Journal\/} (11 2015).

\bibitem{Dasgupta2016}
{\sc Dasgupta, A., and Ruzhansky, M.}
\newblock The {G}ohberg lemma, compactness, and essential spectrum of operators
  on compact {L}ie groups.
\newblock {\em Journal d'Analyse Math{\'e}matique 128}, 1 (Feb 2016), 179--190.

\bibitem{fisicapadica}
{\sc {Dragovich}, B., {Khrennikov}, A.~Y., {Kozyrev}, S.~V., {Volovich}, I.~V.,
  and {Zelenov}, E.~I.}
\newblock {$p$-Adic Mathematical Physics: The First 30 Years}.
\newblock {\em arXiv e-prints\/} (May 2017), arXiv:1705.04758.

\bibitem{gohberg}
{\sc Gohberg, I.}
\newblock On the theory of multidimensional singular integral equations.
\newblock {\em Soviet Math. Dokl.}, 1 (1960), 960 -- 963.

\bibitem{Grochenig2010}
{\sc Gr{\"o}chenig, K., and Klotz, A.}
\newblock Noncommutative approximation: Inverse-closed subalgebras and
  off-diagonal decay of matrices.
\newblock {\em Constructive Approximation 32}, 3 (Dec 2010), 429--466.

\bibitem{Grochenig2010wiener}
{\sc Gr{ö}chenig, K.}
\newblock {\em Wiener's Lemma: Theme and Variations. An Introduction to
  Spectral Invariance and Its Applications}.
\newblock Birkh{\"a}user Boston, Boston, MA, 2010, pp.~175--234.

\bibitem{Grochenig2006}
{\sc Gröchenig, K., and Leinert, M.}
\newblock Symmetry and inverse-closedness of matrix algebras and functional
  calculus for infinite matrices.
\newblock {\em Transactions of the American Mathematical Society 358}, 6
  (2006), 2695--2711.

\bibitem{sobolevmetrizablegroups}
{\sc Górka, P., and Kostrzewa, T.}
\newblock Sobolev spaces on metrizable groups.
\newblock {\em Annales Academiæ Scientiarum Fennicæ. Mathematica 40}, 2
  (2015), 837–849.

\bibitem{hypoellipticityHormander}
{\sc H{\"o}rmander, L.}
\newblock Hypoelliptic differential operators.
\newblock {\em Annales de l'Institut Fourier 11\/} (1961), 477--492.

\bibitem{Jaffard}
{\sc Jaffard, S.}
\newblock Propri{\'e}t{\'e}s des matrices bien localis{\'e}es pr{\`e}s de leur
  diagonale et quelques applications.
\newblock {\em Annales de l'I.H.P. Analyse non lin\'eaire 7}, 5 (1990),
  461--476.

\bibitem{p-adicgod}
{\sc Khrennikov, A.}
\newblock {\em Non-Archimedean Analysis: Quantum Paradoxes, Dynamical Systems
  and Biological Models}, 1~ed.
\newblock Mathematics and Its Applications 427. Springer Netherlands, 1997.

\bibitem{zuniga2}
{\sc Khrennikov, A.~Y., Kozyrev, S.~V., and Zúñiga-Galindo, W.~A.}
\newblock {\em Ultrametric Pseudodifferential Equations and Applications}.
\newblock Encyclopedia of Mathematics and its Applications 168. Cambridge
  University Press, 2018.

\bibitem{2019arXiv190208237K}
{\sc {Kirilov}, A., and {Almeida de Moraes}, W.~A.}
\newblock {Global Hypoellipticity for Strongly Invariant Operators}.
\newblock {\em arXiv e-prints\/} (Feb 2019), arXiv:1902.08237.

\bibitem{globalhypo}
{\sc {Kirilov}, A., and {Almeida de Moraes}, W.~A.}
\newblock {Global Hypoellipticity for Strongly Invariant Operators}.
\newblock {\em arXiv e-prints\/} (Feb 2019), arXiv:1902.08237.

\bibitem{kochubeibook}
{\sc Kochubei, A.}
\newblock {\em Pseudo-Differential Equations and Stochastics Over
  Non-Archimedean Fields}, 1st~ed.
\newblock Pure and Applied Mathematics. 2001.

\bibitem{p-adicball1}
{\sc Kochubei, A.~N.}
\newblock Heat equation in a p-adic ball.
\newblock {\em Methods Funct. Anal. Topology 2}, 3 (1996), 53--58.

\bibitem{p-adicball2}
{\sc Kochubei, A.~N.}
\newblock Linear and nonlinear heat equations on a p-adic ball.
\newblock {\em Ukrainian Mathematical Journal 70}, 2 (Jul 2018), 217--231.

\bibitem{shahla}
{\sc Molahajloo, S.}
\newblock A characterization of compact pseudo-differential operators on
  $\mathbb{S}^1$.
\newblock {\em Pseudo-Differential Operators:Analysis, Applications and
  Computations 213\/} (2011), 25 -- 29.

\bibitem{pontryagin}
{\sc Morris, S.~A.}
\newblock {\em Pontryagin duality and the structure of locally compact abelian
  groups}.
\newblock London Mathematical Society lecture note series 29. Cambridge
  University Press, 1977.

\bibitem{Onneweer1977}
{\sc Onneweer, C.~W.}
\newblock Fractional differentiation on the group of integers of a {$p$}-adic
  or {$p$}-series field.
\newblock {\em Analysis Mathematica 3}, 2 (Jun 1977), 119--130.

\bibitem{Onneweer1978}
{\sc Onneweer, C.~W.}
\newblock {\em Differentiation on a p-Adic Or p-Series Field}.
\newblock Birkh{\"a}user Basel, Basel, 1978, pp.~187--198.

\bibitem{nonharm}
{\sc Ruzhansky, M., and Tokmagambetov, N.}
\newblock Nonharmonic analysis of boundary value problems.
\newblock {\em INTERNATIONAL MATHEMATICS RESEARCH NOTICES}, 12 (2016),
  3548--3615.

\bibitem{Ruzhansky2018}
{\sc Ruzhansky, M., and Tokmagambetov, N.}
\newblock Convolution, fourier analysis, and distributions generated by riesz
  bases.
\newblock {\em Monatshefte f{\"u}r Mathematik 187}, 1 (Sep 2018), 147--170.

\bibitem{ruzhansky1}
{\sc Ruzhansky, M., and Turunen, V.}
\newblock {\em Pseudo-Differential Operators and Symmetries: Background
  Analysis and Advanced Topics}, 1~ed.
\newblock Birkhäuser Basel, 2009.

\bibitem{pseudosvinlekin}
{\sc Saloff-Coste, L.}
\newblock {Operateurs pseudo-differentiels sur certains groupes totalement
  discontinus}.
\newblock {\em Studia Math\/} (1986), 205–228.

\bibitem{Dyadic}
{\sc Stankovic, R.~S., Butzer, P.~L., Schipp, F., Wade, W., and Su, W.}
\newblock {\em Dyadic Walsh Analysis from 1924 Onwards Walsh-Gibbs-Butzer
  Dyadic Differentiation in Science Volume 1 Foundations}, 1~ed.
\newblock Atlantis Studies in Mathematics for Engineering and Science 12.
  Atlantis Press, 2015.

\bibitem{harmonicfractalanalysis}
{\sc Su, W.}
\newblock {\em Harmonic Analysis and Fractal Analysis Over Local Fields and
  Applications}.
\newblock World Scientific, 2017.

\bibitem{padic2}
{\sc Su, W., and Qiu, H.}
\newblock p-adic calculus and its applications to fractal analysis and medical
  science.
\newblock {\em Facta universitatis - series: Electronics and Energetics 21\/}
  (01 2008).

\bibitem{wienerschurlemma}
{\sc Sun, Q.}
\newblock Wiener's lemma for infinite matrices.
\newblock {\em Transactions of the American Mathematical Society 359}, 7
  (2007), 3099--3123.

\bibitem{yo1}
{\sc Velasquez-Rodriguez, J.~P.}
\newblock On some spectral properties of pseudo-differential operators on
  {$T$}.
\newblock {\em Journal of Fourier Analysis and Applications\/} (Mar 2019).

\bibitem{spectraltheoryp-adicpseudos}
{\sc Velasquez-Rodriguez, J.~P.}
\newblock {Spectral properties of Pseudo-differential Operators over the
  compact group of $p$-adic integers and compact Vilenkin groups}.
\newblock {\em arXiv e-prints\/} (Nov 2019), arXiv.

\bibitem{vladimirov}
{\sc V.S.~Vladimirov, I.~V., and Zelenov., E.}
\newblock {\em P-adic analysis and mathematical physics}.
\newblock Series on Soviet and East European mathematics, v. 1. World
  Scientific, 1994.

\bibitem{zuniga1}
{\sc Zúñiga-Galindo, W.~A.}
\newblock {\em Pseudodifferential Equations Over Non-Archimedean Spaces}, 1st
  ed.~ed.
\newblock Lecture notes in mathematics 2174. Springer, 2016.

\end{thebibliography}
\Addresses

\end{document}